\newcommand{\R}{\mathbb{R}}
\newcommand{\Kbb}{\mathbb{K}}
\newcommand{\Ical}{\mathcal{I}}
\newcommand{\Acal}{\mathcal{A}}
\newcommand{\Ccal}{\mathcal{C}}
\newcommand{\Bcal}{\mathcal{B}}
\newcommand{\Tcal}{\mathcal{T}}
\newcommand{\Ecal}{\mathcal{E}}
\newcommand*{\email}[1]{
    \small\href{mailto:#1}{#1}\par
}
\newcommand{\scalar}[2]{\langle #1 , #2 \rangle}
\newcommand{\bigscalar}[2]{\left\langle #1 , #2 \right\rangle}
\DeclareMathOperator*{\argmin}{arg\,min}
\DeclareMathOperator*{\range}{range}
\DeclareMathOperator*{\spann}{span}
\DeclareMathOperator*{\gph}{gph}
\pgfplotsset{compat=1.7}
\setlist[enumerate]{leftmargin=.5in}
\setlist[itemize]{leftmargin=.5in}
\newtheorem{remark}{Remark}
\newtheorem{corollary}{Corollary}
\theoremstyle{definition}
\newtheorem{definition}{Definition}[section]
\newtheorem{theorem}{Theorem}[section]
\newtheorem{lemma}[theorem]{Lemma}
\newcommand\numberthis{\addtocounter{equation}{1}\tag{\theequation}}
\title{Optimality Conditions for Bilevel Imaging Learning Problems with Total Variation Regularization}
\author{Juan Carlos De los Reyes\thanks{Research Center for Mathematical Modelling (MODEMAT), Escuela Polit\'ecnica Nacional, Quito, Ecuador
  (\email{juan.delosreyes@epn.edu.ec}, \email{david.villacis01@epn.edu.ec}, \url{http://www.modemat.epn.edu.ec/}).}
\and David Villac\'{\i}s\footnotemark[2]}
\begin{document}
\maketitle
\begin{abstract}
    We address the problem of optimal scale-dependent parameter learning in total variation image denoising. Such problems are formulated as bilevel optimization instances with total variation denoising problems as lower-level constraints. For the bilevel problem, we are able to derive M-stationarity conditions, after characterizing the corresponding Mordukhovich generalized normal cone and verifying suitable constraint qualification conditions. We also derive B-stationarity conditions, after investigating the Lipschitz continuity and directional differentiability of the lower-level solution operator. A characterization of the Bouligand subdifferential of the solution mapping, by means of a properly defined linear system, is provided as well. Based on this characterization, we propose a two-phase non-smooth trust-region algorithm for the numerical solution of the bilevel problem and test it computationally for two particular experimental settings.
\end{abstract}

  {\bf Keywords:} Bilevel optimization, machine learning, variational models, total variation.

  {\bf AMS:} 49K99, 90C33, 68U10, 68T99, 65K10



\section{Introduction}

In this work we address the bilevel optimal selection of parameters in total variation image denoising models, from a variational and nonsmooth analysis perspectives. Bilevel techniques were proposed for optimal parameter selection of variational models in the seminal work \cite{de2013image}, provided a training dataset to learn from. The variational models considered there were based on the total variation (TV) seminorm and different noise statistics models were taken into account. Thereafter, apart of variational denoising problems, the bilevel learning framework has been considered for other imaging applications such as image restoration \cite{de2016structure,de2017bilevel}, blind image deconvolution \cite{hintermuller2015bilevel}, image segmentation \cite{ochs2016techniques,ranftl2014deep}, nonlocal models \cite{d2021bilevel} and kernel parameter estimation for support vector machines \cite{klatzer2015continuous}.
The bilevel methodology was also extended by Cao et al. \cite{van2017learning,calatroni2017bilevel}, Hintermüller et al. \cite{hintermuller2015bilevel,dong2011automated} and Strong et al. \cite{strong1996spatially} for learning different optimal \emph{scale-dependent} total variation (TV) and total generalized variation (TGV) denoising parameters. Either using a training set or noise statistics of the image, scale-dependent problems face the risk of leading to overfitted optimal parameters (see, e.g., \cite{dlrvillacis2021}). To gain generalization, alternative parameter functions may be considered that lie between a scalar and a fully scale-dependent weight. Examples of those intermediate functions are patch-dependent parameters, dictionary weights or basis functions coefficients.

One of the most challenging aspects in bilevel imaging learning, either with scalar or scale-dependent parameters, is the derivation of necessary optimality conditions that characterize the optimal solutions sharply and that may be used for numerical purposes. The approach pursued in \cite{de2013image,kunisch2013bilevel,van2017learning} was based on a local regularization of the nonsmooth terms and an asymptotic analysis thereafter, yielding a \emph{C-stationarity system} that does not fully characterize optimal parameters. A related approach, based on a dual reformulation of the lower level instance, was studied in \cite{hintermuller2017analytical,hintermuller2019generating}. Alternatively, a direct nonsmooth approach was considered in \cite{hintermuller2015bilevel} and \cite{dlrvillacis2021} to learn point spread functions in blind deconvolution models and the weight in front of the fidelity term in denoising models, respectively. In both such cases, the parameter affects the fidelity term and does not alter the structure of the primal-dual reformulation of the lower-level problem. Based on variational analysis tools, an M-stationarity system was then obtained.

Our aim in this paper consists in deriving sharp optimality conditions of \emph{Mordukhovich (M-)} and \emph{Bouligand (B-)} type for total variation bilevel learning problems, when the scale-dependent parameter appears within the regularizer. In such cases, differently from \cite{hintermuller2015bilevel,dlrvillacis2021}, the nonnegativity constraint of the parameter alters the structure of the primal-dual characterization of the variational denoising model, leading to the appearance of not only a biactive, but also a \emph{triactive set}. This involved nonsmoothness significantly complicates the derivation of the corresponding Fr\'echet and Mordukhovich normal cones, as well as the nonsmooth analysis of the lower-level solution mapping.

Firstly, after introducing the bilevel problem, we are able to reformulate it as a \emph{generalized mathematical program with equilibrium constraints (GMPEC)}. We rigorously characterize the corresponding Mordukhovich generalized normal cone (\Cref{lemma:mordukhovich_cone}) and verify a constraint qualification condition. This leads to the derivation of an \emph{M-stationarity} system for the characterization of local minima (\Cref{thm:m stationarity}).
Thereafter, we focus on \emph{B-stationarity} conditions, which require the directional differentiability of the solution mapping. In that respect, we are able to prove the Lipschitz continuity and directional differentiability of the solution operator, and characterize the directional derivative by means of a variational inequality (\Cref{thm: directional derivative}). In case of strict complementarity, the directional derivative becomes the Fr\'echet one and is characterized by a linear system. Also the Bouligand subdifferential of the solution operator is characterized by means of a general linear problem, based on a splitting of the biactive set and a properly defined subspace (\Cref{teo: G is a solution of the linear system}).

A further challenge is related with the numerical solution of the bilevel learning problems. Based on the properties of a local regularization technique, a numerical optimization algorithm of second-order type was considered in \cite{de2013image} and further explored in \cite{calatroni2013dynamic}, where dynamic sampling techniques were taken into account to accelerate the algorithm in the case of large datasets. A new approach handling the bilevel problem using a non-smooth setting can be found in the work of Ochs et. al. \cite{ochs2016techniques}, where the authors make use of algorithmic differentiation to find optimal parameters in variational models. An additional contribution of this paper is related to the solution of the bilevel instances by means of a nonsmooth trust-region algorithm. Specifically, based on the results in \cite{christof2017non} and the Bouligand subdifferential characterization, we devise a two-phase trust-region algorithm for the solution of the problems (\Cref{algo:tr_algorithm}). In the first phase, the Bouligand subdifferential is used in the quadratic trust-region subproblem, while in the second one, when the trust-region radius becomes small enough, a regularized gradient is utilized. We verify the performance of the proposed algorithm by means of scalar and scale-dependent experiments.

The paper is organized as follows. We will describe the bilevel learning problem in \Cref{sec:problem_formulation} along with the standing assumptions. In \Cref{sec:mordukhovich_stationarity} we will characterize the Mordhukovich generalized normal cone and verify an appropriate constraint qualification condition, that will enable us to derive an M-stationary optimality system for the bilevel learning problem. In \Cref{sec:bouligand_stationarity} we study the Lipschitz continuity and directional differentiability of the solution operator, with a proper characterization of the directional derivative, to obtain B-stationarity conditions for the bilevel problem. Thereafter, in \Cref{sec:bouligand_subdifferential} we characterize the Bouligand subdifferential of the solution mapping by means of a generalized linear system. This characterization of the Bouligand subdifferential will be of use in \Cref{sec:tr}, where we propose a non-smooth trust-region solution algorithm.  Finally, detailed numerical experiments will be provided in \Cref{sec:experiments}.

\section{Problem Formulation}
\label{sec:problem_formulation}
Variational image denoising is a known \textit{ill-posed} inverse problem. Indeed, by considering an image as a $m_1\times m_2$ pixel matrix and mapping this matrix into a vector of length $m=m_1m_2$, a variational denoising model can be formulated as follows:
\begin{equation}
  \min_{u\in\R^m} \;\Ecal(u) = \phi(u,f) + \mathcal{R}(u,\alpha),
\end{equation}
where $f$ is the noise contaminated image, $\alpha$ is a parameter that balances a \textit{data fidelity} term $\phi$ and a \textit{regularization} term $\mathcal{R}$.


This paper will focus on the case where the regularization term corresponds to the \textit{Isotropic Total Variation Semi-norm} (TV). To characterize the TV regularizer, let us introduce the discrete gradient operator $\Kbb : \R^m\to\R^{n\times 2}$
\begin{equation*}
  \Kbb u = (\Kbb_x u, \Kbb_y u)\in \R^n\times\R^n.
\end{equation*}
The matrix $\Kbb_x$ computes the difference in all neighboring pixel intensities in the $x$-direction and $\Kbb_y$ in the $y$-direction. Taking $u\in\R^m$ as a given image, the isotropic discrete total variation is then defined by $\|\Kbb u\|_{2,1} := \sum_{j=1}^n \|(\Kbb u)_j\|,$
where $\|\cdot\|$ is the euclidean norm and $(\Kbb u)_j$ corresponds to the $j$-th row of $\Kbb u$.
This regularizer induces sparsity on the gradients of the image, which favors piecewise constant images with sparse edges \cite{strong2003edge}.

Now, regarding the balance parameter $\alpha$, it can be considered as a scalar value in the case we assume a uniform noise distribution across the entire image. However, such assumption is known to be unrealistic for most applications. Therefore, we will explore different types of balance parameters, starting with a \textit{scale-dependent} parameter $\alpha\in\R^n_+$ and the following form of the regularization term:
\begin{equation}\label{eq:data_fidelity_term}
  \mathcal{R}(u,\alpha) = \sum_{j=1}^n\alpha_j\|(\Kbb u)_j\|.
\end{equation}
With these considerations in mind, and assuming a $C^2$ and strongly convex data fidelity term $\phi:\R^m\to\R$, our variational denoising model reads as follows:
\begin{equation}\label{eq:sd_denoising_model}
  \min_{u\in\R^n} \;\mathcal{E}(u) := \phi(u,f) + \sum_{j=1}^n\alpha_j\|(\Kbb u)_j\|.
\end{equation}
In certain circumstances, such scale-dependent parameter may lead to an overfitted solution, which is why an intermediate approach is often desirable. One of such approaches considers a parameter $\alpha\in\R^p$ with $p<<n$ as piecewise constant in different patches of the image. Along the paper, we will derive the theoretical results only for the scale-dependent model, since they easily extend to the  patch-based and scalar models.

We will focus on bilevel strategies for finding optimal regularization parameters for the image denoising model \cref{eq:sd_denoising_model}. That is, given a \textit{training set} containing $N$ pairs of damaged and ``noise-free'' pairs of images, we will find an optimal regularization parameter $\alpha\in\R^n_+$ that minimizes a continuously differentiable, proper and strongly convex loss function $J:\R^m\to\R$, that measures the quality of the reconstruction with respect to the ground truth images in the training set.

Using training set pairs $(\bar{u}_i,f_i)$, $i=1,\dots,N$, with noisy images $f_i$ and ground truth images $\bar{u}_i$, our bilevel parameter learning problem reads as follows:
\begin{mini!}
  {\substack{\alpha\in\R^n, \alpha \ge 0}}{\sum_{i=1}^N\;J(u_i,\bar{u}_i)}
  {\label{eq:bilevel_problem}}{\label{eq:bilevel_problem_high}}
  \addConstraint{u_i \in}{\argmin_{u\in\R^m} \phi(u,f_i) + \sum_{j=1}^n\alpha_j\|(\Kbb u)_j\| \label{eq:bilevel_problem_lower},}{\qquad i=1,\dots,N}
\end{mini!}


Regarding the spatially dependent TV denoising problem \cref{eq:sd_denoising_model}, we can guarantee the existence of a unique solution as well as characterize it through a necessary and sufficient optimality condition.

\begin{theorem}\label{teo:unique_solution_vi}
  Problem \cref{eq:sd_denoising_model} has a unique solution $u^*\in\R^m$. Moreover, a necessary and sufficient condition is given by the following variational inequality of the second kind
  \begin{equation}\label{eq:nec_suff_condition_vi}
    \scalar{\phi'(u^*)}{v-u^*} + \sum_{j=1}^n\alpha_j\|(\Kbb v)_j\| - \sum_{j=1}^n\alpha_j\|(\Kbb u^*)_j\|\ge 0,\;\forall v \in\R^m.
  \end{equation}
\end{theorem}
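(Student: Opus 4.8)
The plan is to exploit the convex structure of the energy and avoid any subdifferential calculus on the nonsmooth term. First I would verify that $\mathcal{E}$ is strongly convex: the data term $\phi(\cdot,f)$ is strongly convex by assumption, while $\mathcal{R}(u,\alpha)=\sum_{j=1}^n\alpha_j\|(\Kbb u)_j\|$ is convex because each $u\mapsto\|(\Kbb u)_j\|$ is a seminorm (hence convex) and the weights satisfy $\alpha_j\ge 0$; the sum of a strongly convex and a convex function is again strongly convex. Strong convexity on $\R^m$ forces coercivity (from the quadratic lower bound $\mathcal{E}(u)\ge\mathcal{E}(u_0)+\scalar{g}{u-u_0}+\tfrac{\mu}{2}\|u-u_0\|^2$), and $\mathcal{E}$ is continuous, so the Weierstrass theorem yields a minimizer; strict convexity, a consequence of strong convexity, yields uniqueness. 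This settles the first assertion.

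For the characterization I would prove both implications by elementary convexity arguments. \emph{Sufficiency:} if $u^*$ satisfies \cref{eq:nec_suff_condition_vi}, then for arbitrary $v\in\R^m$ the gradient inequality for the convex $C^2$ function $\phi$ gives $\phi(v,f)\ge\phi(u^*,f)+\scalar{\phi'(u^*)}{v-u^*}$; adding $\sum_{j=1}^n\alpha_j\|(\Kbb v)_j\|$ to both sides and using \cref{eq:nec_suff_condition_vi} in the form $\scalar{\phi'(u^*)}{v-u^*}+\sum_{j=1}^n\alpha_j\|(\Kbb v)_j\|\ge\sum_{j=1}^n\alpha_j\|(\Kbb u^*)_j\|$ yields $\mathcal{E}(v)\ge\mathcal{E}(u^*)$, so $u^*$ is the minimizer. \emph{Necessity:} if $u^*$ is the minimizer, then for any $v$ and $t\in(0,1)$ convexity of $\mathcal{R}(\cdot,\alpha)$ gives $\mathcal{R}(u^*+t(v-u^*),\alpha)-\mathcal{R}(u^*,\alpha)\le t\big(\mathcal{R}(v,\alpha)-\mathcal{R}(u^*,\alpha)\big)$, hence $0\le\mathcal{E}(u^*+t(v-u^*))-\mathcal{E}(u^*)\le\phi(u^*+t(v-u^*),f)-\phi(u^*,f)+t\big(\mathcal{R}(v,\alpha)-\mathcal{R}(u^*,\alpha)\big)$; dividing by $t$ and letting $t\downarrow 0$, using differentiability of $\phi$, produces exactly \cref{eq:nec_suff_condition_vi}.

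Since every step relies only on convexity, continuity, and the $C^1$ regularity of $\phi$, there is no genuine obstacle; the only points requiring mild care are stating the coercivity/existence argument cleanly and justifying the difference-quotient limit in the necessity part by the directional differentiability of the smooth term $\phi$. The nonsmooth regularizer never needs to be differentiated: it is handled throughout purely through the convexity inequality $\mathcal{R}((1-t)u^*+tv,\alpha)\le(1-t)\mathcal{R}(u^*,\alpha)+t\mathcal{R}(v,\alpha)$.
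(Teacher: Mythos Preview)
Your proposal is correct and follows essentially the same route as the paper: strong convexity of $\mathcal{E}$ (from strongly convex $\phi$ plus convex TV term) yields existence and uniqueness, and the variational inequality is obtained by the standard convexity/difference-quotient argument. The paper merely states the first part and defers the VI derivation to a reference (\cite[Theorem 6.1]{de2015numerical}), whereas you have written out that standard argument explicitly.
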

\begin{proof}
  The strong convexity of $\phi$, along with the convexity of the total variation semi-norm, yields the strong convexity of the lower level optimization problem. Consequently this problem has a unique optimizer $u^*$. The variational inequality then follows by standard arguments (see, e.g., \cite[Theorem 6.1]{de2015numerical})
\end{proof}

Using duality techniques \cite{ekeland1999convex}, the variational inequality of the second kind \eqref{eq:nec_suff_condition_vi} can be equivalently written in primal-dual form, yielding the following reformulation of the lower-level problem:
\begin{subequations}\label{eq:fenchel_dual_lower_level}
  \begin{align}
    \phi'(u) + \Kbb^\top q &= 0,\\
    \scalar{q_j}{(\Kbb u)_j} - \alpha_j\|(\Kbb u)_j\|&=0,\;&\forall j=1,\dots,n,\\
    \|q_j\|-\alpha_j&\le 0,\;&\forall j=1,\dots,n,
  \end{align}
\end{subequations}
Consequently, the bilevel parameter learning problem, for a single training pair, can be written as:
\begin{mini!}
  {\substack{\alpha\in\R^n}}{J(u,\bar{u})}{\label{eq:bilevel_problem_reformulated}}{}
  \addConstraint{\phi'(u) + \Kbb^\top q }{= 0}{}{}
  \addConstraint{\scalar{q_j}{(\Kbb u)_j} - \alpha_j\|(\Kbb u)_j\|}{= 0,}{\qquad \forall j=1,\dots,n}{}
  \addConstraint{\|q_j\|-\alpha_j}{\le 0,}{\qquad \forall j=1,\dots,n}{}
  \addConstraint{\alpha_j}{\ge 0,}{\qquad \forall j=1,\dots,n.}{}
\end{mini!}
For the sake of clarity in the exposition, we restrict hereafter the analysis to the case of a single training pair. The results are, however, easily extendable to larger training sets.

\section{Mordukhovich Stationarity}\label{sec:mordukhovich_stationarity}
In this section we address stationarity conditions for the bilevel problem \cref{eq:bilevel_problem_reformulated} using variational analysis tools. Specifically, we reformulate the bilevel problem as a generalized mathematical program with equilibrium constraints (GMPEC) and verify a constraint qualification condition based on the variational geometry of the solution set.



A generalized mathematical program with equilibrium constraints may be formulated in the following general form:
\begin{mini}
  {}{f(x,y)}{}{\label{eq:outrata_cq_problem}}
  \addConstraint{0\in F_1(x,y) + Q(F_2(x,y))}
  \addConstraint{(x,y)\in\omega,}
\end{mini}
where $f$ is locally Lipschitz on $\R^n\times\R^m$, $F_1: \R^n\times\R^m \to \R^m$ and $F_2: \R^n\times\R^m \to \R^l$ are continuously differentiable, $\omega\subset \R^n\times\R^m$ is non-empty and closed and $Q:\R^l \rightrightarrows \R^m$ is a multifunction with closed graph. A constraint qualification condition for GMPEC that guarantee the existence of KKT multipliers and its corresponding stationarity system is privede in the next result.
\begin{theorem}[Outrata \cite{outrata2000generalized}]
  \label{teo:outrata_cq}
  Let $(x^*,y^*)$ be a local solution of \cref{eq:outrata_cq_problem} and the following constraint qualification
  \begin{multline}\label{eq:outrata_cq_conditions}
    \left.
    \begin{aligned}
      \begin{bmatrix}
        (\nabla_x F_2(x^*,y^*))^\top & -(\nabla_x F_1(x^*,y^*))^\top\\
        (\nabla_y F_2(x^*,y^*))^\top & -(\nabla_y F_1(x^*,y^*))^\top
      \end{bmatrix}
      \begin{bmatrix}
        w\\
        z
      \end{bmatrix}\in -N^M_\omega(x^*,y^*),\\
      (w,z)\in N^M_{\gph Q}(F_2(x^*,y^*),-F_1(x^*.y^*))
    \end{aligned}\right\}\text{implies}\begin{cases}
      w=0,\\
      z=0
    \end{cases}
  \end{multline}
  holds true. Then there exist a pair $(\xi,\eta)\in\partial f(x^*,y^*)$, a pair $(\gamma,\delta)\in N^M_\omega(x^*,y^*)$, and a KKT pair $(w^*,z^*)\in N^M_{\gph Q}(F_2(x^*,y^*),-F_1(x^*,y^*))$ such that
  \begin{align*}
    0 = \xi + (\nabla_x F_2(x^*,y^*))^\top w^* - (\nabla_x F_1(x^*,y^*))^\top z^* + \gamma,\\
    0 = \eta + (\nabla_y F_2(x^*,y^*))^\top w^* - (\nabla_y F_1(x^*,y^*))^\top z^* + \delta.
  \end{align*}
  where $N^M_{\gph Q}$ stands for Mordukhovich generalized normal cone to the graph of $Q$.
\end{theorem}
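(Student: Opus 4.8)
The plan is to turn the GMPEC into a purely geometric optimization problem and then apply the Mordukhovich generalized differential calculus. First I would absorb the equilibrium inclusion into a set constraint: introduce the $C^1$ map $g:\R^n\times\R^m\to\R^n\times\R^m\times\R^l\times\R^m$ defined by $g(x,y)=(x,y,F_2(x,y),-F_1(x,y))$, and observe that the feasible set of \cref{eq:outrata_cq_problem} is exactly $M:=g^{-1}(\omega\times\gph Q)$, which is closed because $\omega$ and $\gph Q$ are closed and $g$ is continuous. Thus $(x^*,y^*)$ is a local minimizer of $f$ over $M$, i.e.\ a local (unconstrained) minimizer of $f+\delta_M$, where $\delta_M$ is the indicator of $M$.

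Second, I would apply the generalized Fermat rule together with a sum rule. Since $f$ is locally Lipschitz, the Mordukhovich sum rule for ``Lipschitz plus lower semicontinuous'' functions gives $0\in\partial f(x^*,y^*)+N^M_M(x^*,y^*)$, where $\partial f$ is the limiting subdifferential and no additional constraint qualification is needed for this step in the finite-dimensional setting.

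Third — and this is the crux — I would estimate $N^M_M(x^*,y^*)$ by the preimage rule for $C^1$ maps applied to $g$ and the product set $\omega\times\gph Q$. The qualification required by that rule is that the only pair $\big((\gamma,\delta),(w,z)\big)\in N^M_\omega(x^*,y^*)\times N^M_{\gph Q}(F_2(x^*,y^*),-F_1(x^*,y^*))$ with $\nabla g(x^*,y^*)^\top\big((\gamma,\delta),(w,z)\big)=0$ is the zero pair. Writing $\nabla g(x^*,y^*)$ in block form one computes $\nabla g(x^*,y^*)^\top\big((\gamma,\delta),(w,z)\big)=\big(\gamma+(\nabla_xF_2)^\top w-(\nabla_xF_1)^\top z,\ \delta+(\nabla_yF_2)^\top w-(\nabla_yF_1)^\top z\big)$, so after eliminating $\gamma,\delta$ this qualification is precisely \cref{eq:outrata_cq_conditions}: if the displayed matrix applied to $(w,z)$ lies in $-N^M_\omega(x^*,y^*)$ and $(w,z)\in N^M_{\gph Q}(\cdot)$, then $w=z=0$ (and hence also $\gamma=\delta=0$). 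Because all spaces here are finite-dimensional, the sequential-normal-compactness hypotheses of the general preimage/intersection rule are automatically satisfied, and the normal cone to the product splits as $N^M_{\omega\times\gph Q}=N^M_\omega\times N^M_{\gph Q}$; hence under \cref{eq:outrata_cq_conditions} we obtain $N^M_M(x^*,y^*)\subseteq\nabla g(x^*,y^*)^\top\big(N^M_\omega(x^*,y^*)\times N^M_{\gph Q}(F_2(x^*,y^*),-F_1(x^*,y^*))\big)$.

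Finally I would combine the two inclusions: there exist $(\xi,\eta)\in\partial f(x^*,y^*)$, $(\gamma,\delta)\in N^M_\omega(x^*,y^*)$, and $(w^*,z^*)\in N^M_{\gph Q}(F_2(x^*,y^*),-F_1(x^*,y^*))$ such that $0=\xi+(\nabla_xF_2)^\top w^*-(\nabla_xF_1)^\top z^*+\gamma$ and $0=\eta+(\nabla_yF_2)^\top w^*-(\nabla_yF_1)^\top z^*+\delta$, which is exactly the claimed stationarity system. I expect the main obstacle to be purely the bookkeeping of the third step — making sure the precise form of the qualification demanded by Mordukhovich's preimage rule coincides with \cref{eq:outrata_cq_conditions} once the $\omega$-multipliers are eliminated, and verifying that the product rule for the normal cone and the finite-dimensional SNC simplifications genuinely apply; there is no substantial analytic difficulty, the content being the correct assembly of the Fermat rule, the Lipschitz-plus-indicator sum rule, and the $C^1$ preimage rule.
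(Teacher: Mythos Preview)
The paper does not give its own proof of this theorem: it is stated as a quotation from Outrata's paper \cite{outrata2000generalized} and used as a black box, with no argument supplied. So there is no ``paper's own proof'' to compare against.

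That said, your proposal is the standard route to such an M-stationarity result and is correct in outline. Encoding the equilibrium constraint via the $C^1$ map $g(x,y)=(x,y,F_2(x,y),-F_1(x,y))$ and the closed set $\omega\times\gph Q$, invoking the generalized Fermat rule $0\in\partial f+N^M_M$, and then computing $N^M_M$ by Mordukhovich's preimage rule is exactly how Outrata's original argument (and the textbook treatments in Rockafellar--Wets and Mordukhovich) proceed. Your identification of \cref{eq:outrata_cq_conditions} with the qualification condition of the preimage rule, after eliminating the $\omega$-multipliers, is accurate, and the finite-dimensional setting does make the SNC and product-rule issues trivial, as you note. The only thing to be careful about in a full write-up is that the preimage rule yields multipliers up to sign conventions that must match the ones in the displayed system; your block computation of $\nabla g^\top$ already handles this correctly.
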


In our case, using the primal-dual formulation \cref{eq:fenchel_dual_lower_level}, the constraints may be written as
\begin{equation}\label{eq:generalized_equation}
  0\in \phi'(u) + Q(\alpha,u),
\end{equation}
where $Q:\R^n_+\times\R^m \rightrightarrows \R^m$ is the set-valued operator associated to the subdifferential of the Euclidean norm, i.e.,
\begin{equation}\label{eq:set_valued_Q}
  Q(\alpha,u) := \left\{\Kbb^\top q:q\in \R^{n\times 2} , \begin{cases}
    q_j = \alpha_j\frac{(\Kbb u)_j}{\|(\Kbb u)_j\|},\;&\text{ if } \|(\Kbb u)_j\| \neq 0,\alpha_j \ge 0,\\
    \|q_j\|\le \alpha_j,\;&\text{ if }\|(\Kbb u)_j\|=0,\alpha_j \ge 0.
  \end{cases}\right\}
\end{equation}
Equivalently, by making use of the definition of the graph of the multifunction $Q$ we can rewrite \cref{eq:generalized_equation} as follows
\begin{subequations}\label{eq:generalized_equation_2}
  \begin{align}
    \phi'(u) + \Kbb^\top q &= 0,\\
    (\alpha,u,\Kbb^\top q)&\in \gph\;Q,\\
    (\alpha,u)&\in\omega:=\R^n_+\times\R^m,
  \end{align}
\end{subequations}
where $\gph Q:=\{(\alpha,u,\Kbb^\top q)\in\R^n_+\times\R^m\times\R^m: \Kbb^\top q \in Q(\alpha,u)\}$.

The constraint qualification in \cref{teo:outrata_cq} guarantees the existence of multipliers that allow the derivation of a stationarity system. In \cite{hintermuller2015bilevel} such constraint qualification condition was circumvented by using Robinson strong regularity of the set valued equation. In contrast, the structure of the multifunction \cref{eq:set_valued_Q}, depending both on the regularization parameter $\alpha$ and the image $u$, prevent us from using the same shortcut.

Using the structure of the set valued operator $Q$ presented in \cref{eq:set_valued_Q}, let us introduce the following notation for the inactive, strongly active, biactive, zero-inactive and triactive sets, respectively:
\begin{align*}
  \Ical(\alpha,u) &:=\{j\in\{1,\dots,n\}:(\Kbb u)_j\neq 0,\; \alpha_j>0\},\\
  \Acal_s(\alpha,u) &:=\{j\in\{1,\dots,n\}:\|q_j\|<\alpha_j\},\\
  \Bcal(\alpha,u) &:= \{j\in\{1,\dots,n\}:\|q_j\|=\alpha_j,\;(\Kbb u)_j = 0,\;\alpha_j > 0\},\\
  \Ical_0(\alpha,u) &:=\{j\in\{1,\dots,n\}:(\Kbb u)_j\neq 0,\; \alpha_j=0\},\\
  \Tcal(\alpha,u) &:= \{j\in\{1,\dots,n\}:\|q_j\|=\alpha_j,\;(\Kbb u)_j = 0,\;\alpha_j = 0\}.
\end{align*}
We will omit the arguments in the set notation whenever they can be inferred from the context. Let us note that the condition in the strongly active set $\Acal_s$ implies a strict positive parameter $\alpha_j>0$ for this index set.

The constraint qualification condition \cref{teo:outrata_cq} makes use of key fundamental definitions from Mordukhovich's generalized calculus. In particular, the Mordukhovich normal cone to the graph of the mutifunction $Q$. For the reader's convenience we will lay some basic concepts in variational geometry that will be used through this section. For a more rigorous review refer to, e.g., \cite{rockafellar2009variational}

\begin{definition}[Bouligand Tangent Cone]\label{def:bouligand_tangent_cone}
  Let $C\subset\R^n$ and $\bar{x}\in cl C$. The Bouligand Tangent Cone to $C$ at $x$ is defined as
  \begin{equation}
    T_C(\bar{x}):=\{d\in\R^n:\exists t_k \to 0,\exists x_k\in C:\frac{x_k-\bar{x}}{t_k}\to d\}.
  \end{equation}
\end{definition}

\begin{definition}[Fréchet Normal Cone]\label{def:frechet_normal_cone}
  Let $C\subset\R^n$ and $\bar{x}\in cl C$. The Fréchet Normal Cone is defined as the polar to $T_C(\bar{x})$, i.e.,
  \begin{equation}
    N^F_C(\bar{x})=\left\{d\in\R^n : \limsup_{x\overset{C}{\to}\bar{x}}\frac{\scalar{d}{x-\bar{x}}}{\|x-\bar{x}\|}\le 0\right\}=[T_C(\bar{x})]^\circ.
  \end{equation}
\end{definition}

\begin{definition}[Mordukhovich Normal Cone]\label{def:mordukhovich_normal_cone}
  Let $C\subset\R^n$ and $\bar{x}\in cl C$. The Mordukhovich Normal Cone is defined as
  \begin{equation}
    N^M_C(\bar{x}):=\{d\in\R^n:\exists x_k\to\bar{x},d_k\in N^F_C(x_k)\text{ s.t. }d_k\to d\}.
  \end{equation}
\end{definition}

\begin{remark}\label{rem:mordukhovich_convex}
  If the set $C$ in \cref{def:mordukhovich_normal_cone} is convex, $N^M_C(\bar{x})$ amounts to the standard normal cone to $C$ at $\bar{x}$. Otherwise, this cone is in general non-convex.
\end{remark}

In the following lemmata we obtain the Bouligand tangent cone, the Fr\'echet normal cone and the Mordukhovich normal cone to the graph of the multifunction $Q$.
\begin{lemma}\label{lemma:tangent_cone}
  The Bouligand tangent cone to the graph of $Q$, described in \cref{eq:generalized_equation_2}, is given by
  \begin{multline*}\label{eq:contingent_cone}
    T_{\gph Q}(\alpha,u,\Kbb^\top q) =\\ \left\{ (\delta_\alpha,\delta_u,\Kbb^\top\delta_q):
    \left \{
    \begin{aligned}
      & (\delta_q)_j - (\delta_\alpha)_j\frac{(\Kbb u)_j}{\|(\Kbb u)_j\|} - \alpha_jT_j(\Kbb \delta_u)_j=0,&&\text{if }j\in\Ical,\\
      & (\Kbb\delta_u)_j = 0, &&\text{if }j\in\Acal_s,\\
      & \left.
      \begin{aligned}
          & (\Kbb\delta_u)_j = 0,\scalar{(\delta_q)_j}{q_j}\le \alpha_j(\delta_\alpha)_j\;\vee \\
          & (\Kbb\delta_u)_j = \tilde{c}q_j (\tilde{c}\ge 0),\scalar{(\delta_q)_j}{q_j}=\alpha_j(\delta_\alpha)_j
      \end{aligned} \right\}  &&\text{if }j\in\Bcal,\\
      & (\delta_q)_j - (\delta_\alpha)_j\frac{(\Kbb u)_j}{\|(\Kbb u)_j\|}=0,\;(\delta_\alpha)_j\ge 0&&\text{if }j\in\Ical_0,\\
      & \left.
      \begin{aligned}
          & (\delta_\alpha)_j \ge 0,(\Kbb\delta_u)_j\in\R^2\backslash\{0\},(\delta_q)_j - (\delta_\alpha)_j\frac{(\Kbb \delta_u)_j}{\|(\Kbb \delta_u)_j\|}=0\;\vee \\
          & (\delta_\alpha)_j \ge 0, (\Kbb \delta_u)_j = 0, \|(\delta_q)_j\| - (\delta_\alpha)_j \le 0\\
      \end{aligned} \right\}  &&\text{if }j\in\Tcal,
    \end{aligned} \right.
    \right\}
  \end{multline*}
  where
  \begin{equation*}
    T_j(\Kbb v)_j = \frac{(\Kbb v)_j}{\|(\Kbb u)_j\|} - \frac{(\Kbb u)_j(\Kbb u)_j^\top(\Kbb v)_j}{\|(\Kbb u)_j\|^3}, \text{ for } v\in\R^n.
  \end{equation*}
\end{lemma}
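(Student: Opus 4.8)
The plan is to reduce the computation to a family of elementary, low--dimensional tangent--cone computations, one per index $j\in\{1,\dots,n\}$, and then to transport the outcome to the ambient space through the linear operators $\Kbb$ and $\Kbb^\top$. By \cref{eq:set_valued_Q}, the membership $(\alpha,u,\Kbb^\top q)\in\gph Q$ is equivalent to requiring, for every $j$, that the triple $(\alpha_j,(\Kbb u)_j,q_j)\in\R\times\R^2\times\R^2$ lie in the reduced set
\[
  G_j:=\Bigl\{(a,w,r):\ a\ge 0,\ r=a\tfrac{w}{\|w\|}\ \text{if}\ w\neq 0,\ \|r\|\le a\ \text{if}\ w=0\Bigr\},
\]
together with the identity that the third ambient coordinate equals $\Kbb^\top q$. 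Since $u$ enters only through the linear map $u\mapsto((\Kbb u)_j)_j$ and the last coordinate is the linear image $q\mapsto\Kbb^\top q$, a direction $(\delta_\alpha,\delta_u,\delta_v)$ lies in $T_{\gph Q}$ exactly when, along admissible sequences, each of its $j$--th blocks $((\delta_\alpha)_j,(\Kbb\delta_u)_j,(\delta_q)_j)$ is tangent to $G_j$; so the heart of the argument is the determination of $T_{G_j}$ at the base point $(\alpha_j,(\Kbb u)_j,q_j)$, for which the relevant dichotomy is precisely the partition into $\Ical,\Acal_s,\Bcal,\Ical_0,\Tcal$.

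I would then carry out the five cases. For $j\in\Ical$ (so $w\neq0$, $a>0$), near the base point $G_j$ is the graph of the $C^\infty$ map $(a,w)\mapsto aw/\|w\|$ on the open set $\{a>0,\ w\neq0\}$, hence $T_{G_j}$ equals its tangent space, i.e.\ the linearization, and the Jacobian of $w\mapsto w/\|w\|$ at $(\Kbb u)_j$ is exactly the operator $T_j$. For $j\in\Ical_0$ (so $w\neq0$, $a=0$) the same graph is intersected with the now--active half--space $\{a\ge0\}$, and since $\alpha_j=0$ the linearization collapses to $(\delta_q)_j=(\delta_\alpha)_j(\Kbb u)_j/\|(\Kbb u)_j\|$ with $(\delta_\alpha)_j\ge0$. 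For $j\in\Acal_s$ (so $w=0$, $\|r\|<a$) no point of $G_j$ with $w\neq0$ can sit in a neighborhood of the base point, because such points satisfy $\|r\|=a$; hence locally $G_j$ is the affine slice $\{w=0\}$ with the inequality $\|r\|\le a$ inactive, so $T_{G_j}=\{\delta_w=0\}$. For $j\in\Bcal$ (so $w=0$, $\|r\|=a>0$), $G_j$ is locally the union of the convex ``cap'' $\{w=0,\ \|r\|\le a\}$ and the ``spherical sheet'' $\{(a,w,aw/\|w\|):w\neq0\}$; since tangent cones distribute over finite unions, the cap contributes $\{\delta_w=0,\ \scalar{q_j}{\delta_r}\le\alpha_j\delta_a\}$ and the sheet contributes $\{\delta_w=\tilde c\,q_j,\ \tilde c\ge0,\ \scalar{q_j}{\delta_r}=\alpha_j\delta_a\}$, which is the two--branch description in the statement. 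Finally, for $j\in\Tcal$ (so $w=0$, $a=0$, hence $r=0$) the base point is the origin and $G_j$ is a closed cone, so $T_{G_j}(0)=G_j$, which is exactly the displayed two--branch set with $(\Kbb\delta_u)_j$ playing the role of the new gradient direction. Reassembling over $j$ via $\delta_u\mapsto((\Kbb\delta_u)_j)_j$ and $\delta_q\mapsto\Kbb^\top\delta_q$ yields the claimed formula.

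In every case both inclusions must be obtained directly from the sequential definition of the Bouligand cone (\cref{def:bouligand_tangent_cone}): ``$\supseteq$'' by exhibiting, block by block, an admissible curve (or sequence) with the prescribed derivative, chosen consistently in the shared variable $u$; ``$\subseteq$'' by taking an arbitrary realizing sequence and reading off the limiting direction. The crux is the biactive case: there $G_j$ is neither smooth nor convex, so one must argue from scratch that on the spherical sheet the admissible tangential directions of $w$ are exactly the nonnegative multiples of $q_j$, that the matching $\delta_r$ is then constrained by the \emph{equality} $\scalar{q_j}{\delta_r}=\alpha_j\delta_a$ (coming from $\|r^k\|=\alpha^k_j$ along the sheet), and that every such constrained direction is genuinely realized by an explicit feasible curve; the union--of--tangent--cones identity and, for the triactive case, closedness of the cone $G_j$ then settle the geometry.

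A genuine secondary technicality, which I would handle while reassembling, is that neither $\Kbb$ nor $\Kbb^\top$ is injective, so the passage between the per--index picture and the ambient triple $(\alpha,u,\Kbb^\top q)$ is not automatic. For ``$\supseteq$'' this causes no trouble, since the third coordinate depends linearly on $q$ and feasibility is preserved under $\Kbb^\top$. For ``$\subseteq$'' one must, along a realizing sequence, select dual variables $q^k$ whose relevant difference quotients are kept under control --- using that on inactive indices $q^k_j$ depends smoothly on $(\alpha^k_j,(\Kbb u^k)_j)$, while on strongly active and biactive indices $\|q^k_j\|\le\alpha^k_j$ confines them to a bounded ball --- so that a representative direction $\delta_q$ with $\Kbb^\top\delta_q=\delta_v$ can be extracted and identified with the per--index limits found above.
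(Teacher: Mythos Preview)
Your proposal is correct and shares with the paper the essential case--by--case structure according to the index partition $\Ical,\Acal_s,\Bcal,\Ical_0,\Tcal$, but the packaging differs in a way worth noting. The paper works directly with realizing sequences $(\alpha_k,u_k,\Kbb^\top q_k)$ and computes the limiting difference quotients by hand in each case (invoking Lyusternik for $\Ical$, and in $\Bcal$ and $\Tcal$ running separately through approximations from each neighboring stratum). You instead factor through the five--dimensional model sets $G_j$ and appeal to structural tangent--cone facts: for $\Ical$ the graph of a smooth map, for $\Bcal$ the identity $T_{A\cup B}=T_A\cup T_B$ applied to the cap/sheet decomposition, and for $\Tcal$ the observation that $G_j$ is a closed cone so $T_{G_j}(0)=G_j$. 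This buys you a cleaner argument, especially in the triactive case where the paper checks four approximation types explicitly, and it makes the two inclusions $\subseteq/\supseteq$ more transparent (the paper's proof is less systematic about the latter). You are also more explicit than the paper about the reassembly step through the non--injective maps $\Kbb$ and $\Kbb^\top$, which the paper leaves implicit; your outline for extracting a representative $\delta_q$ via boundedness on $\Acal_s\cup\Bcal$ and smoothness on $\Ical$ is the right way to close that gap.
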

\begin{proof}
  The tangent cone to the graph of the multifunction $Q$ is defined as
  \begin{multline*}
    T_{\gph Q}(\alpha,u,\Kbb^\top q) = \{(\delta_\alpha,\delta_u,\Kbb^\top \delta_q)\in\R^n\times\R^m\times\R^m: \exists t_k\to 0, (\alpha_k,u_k,\Kbb^\top q_k)\in \gph Q :\\\frac{1}{t_k}((\alpha_k,u_k,\Kbb^\top q_k)-(\alpha,u,\Kbb^\top q))\to (\delta_\alpha,\delta_u,\Kbb^\top \delta_q)\}.
  \end{multline*}
  In order to calculate this cone, we split the analysis into different cases, according to the definition of the multifunction $Q$.
  \begin{description}
  \item[Case 1: $\bm{j\in\Ical.}$]
  In this index set the dual variable can be uniquely characterized. According to \cref{eq:set_valued_Q}, the following equation is fulfilled:
  \begin{equation}\label{eq:lyusternik_manifold}
    h_j(\alpha,u,\Kbb^\top q) = q_j-\alpha_j\frac{(\Kbb u)_j}{\|(\Kbb u)_j\|}=0.
  \end{equation}
  Using Lyusternik's theorem \cite[Theorem 4.21]{jahn2020introduction}, the $j-$th component of the tangent direction then satisfies
  \begin{equation*}
    (\delta_q)_j-(\delta_\alpha)_j\frac{(\Kbb u)_j}{\|(\Kbb u)_j\|} - \alpha_jT_j(\Kbb\delta_u)_j=0.
  \end{equation*}
  \item[Case 2: $\bm{j\in\Acal_s.}$]
  In this index set we know $\|q_j\|<\alpha_j$. Therefore, this point can only be approximated by taking sequences in the strongly active set. For $n$ sufficiently large we then take sequences such that $(\Kbb u_k)_j=0$, $\|(q_k)_j\|<(\alpha_k)_j$. Taking the limit in $(\Kbb u_k)_j=0$, as $k \to \infty$, yields $(\Kbb \delta_u)_j = 0$. For the dual variable, let us take the sequence $(q_k)_j = q_j + t_k d$ with arbitrary $d\in\R^2$. It then follows that
  \begin{equation*}
    (\delta_q)_j = \lim_{t_k\to 0} \frac{(q_k)_j-q_j}{t_k} = d.
  \end{equation*}
  Since we took an arbitrary direction $d$ it yields $(\delta_q)_j\in\R^2$. Similarly, we get that $(\delta_\alpha)_j\in\R$.
  \item[Case 3: $\bm{j\in\Bcal.}$]
  There are three possible approximations to a point in this index set, via inactive, strongly active or biactive sequences. Since $\alpha_j>0$ in all these three cases, we can approximate it by sequences $(\alpha_k)_j<\alpha_j$ or $(\alpha_k)_j>\alpha_j$. Consequently, we get $(\delta_\alpha)_j\in\R$.

  Now, if we approach a biactive point using a sequence in the inactive set, this sequence satisfies $(\Kbb u_k)_j\neq 0$. Taking in particular $(\Kbb u_k)_j = (\Kbb u)_j + t_k (\Kbb\delta_{u_k})_j = t_k (\Kbb\delta_{u_k})_j$, then the sequence of dual variables has the following form
  \begin{equation}\label{eq:tan_cone_b_3}
    (q_k)_j = (\alpha_k)_j\frac{(\Kbb u_k)_j}{\|(\Kbb u_k)_j\|} = (\alpha_k)_j\frac{(\Kbb\delta_{u_k})_j}{\|(\Kbb\delta_{u_k})_j\|}.
  \end{equation}
  Furthermore, considering the following product
  \begin{equation*}
    \scalar{(q_k)_j}{(\Kbb\delta_{u_k})_j} =\bigscalar{(\alpha_k)_j\frac{(\Kbb\delta_{u_k})_j}{\|(\Kbb\delta_{u_k})_j\|}}{(\Kbb\delta_{u_k})_j} = (\alpha_k)_j \|(\Kbb\delta_{u_k})_j\|,
  \end{equation*}
  and taking the limit as $k \to \infty$ we get $\scalar{q_j}{(\Kbb\delta_u)_j} = \alpha\|(\Kbb\delta_u)_j\|$. Recalling that in this index set $\|q_j\|=\alpha_j > 0$, we know both vectors are collinear, i.e.,
  \begin{equation}\label{eq:tan_cone_b_0}
    (\Kbb\delta_u)_j=\tilde{c} q_j,\;\text{ for some }\tilde{c}\ge 0.
  \end{equation}

  Using that $\|q_j\|=\alpha_j$, the following product holds
  \begin{align*}
    \bigscalar{\frac{(q_k)_j-q_j}{t_k}}{q_j} &= \frac{1}{t_k}(\scalar{(q_k)_j}{q_j}-\scalar{q_j}{q_j})= \frac{1}{t_k}(\scalar{(q_k)_j}{q_j} - \scalar{(q_k)_j}{(q_k)_j} + \scalar{(q_k)_j}{(q_k)_j} - \alpha_j^2),\\
    &= \bigscalar{(q_k)_j}{\frac{q_j-(q_k)_j}{t_k}} + \frac{(\alpha_k)_j^2-\alpha_j^2}{t_k}.
  \end{align*}
  The last equality holds since from \cref{eq:tan_cone_b_3}, we get the product $\scalar{(q_k)_j}{(q_k)_j} = (\alpha_k)_j^2$. Taking the limit as $t_k\to 0$ we get
  \begin{equation}\label{eq:tan_cone_b_1}
    \scalar{(\delta_q)_j}{q_j} = \alpha_j(\delta_\alpha)_j.
  \end{equation}

  Now, if the approximation is done through a sequence of strongly active points, we know the sequence satisfies $(\Kbb u_k)_j=0$ and $\|q_k\|<(\alpha_k)_j$. In this case we know $(\Kbb \delta_u)_j = 0$ and, using the Cauchy-Schwarz inequality, we get
  \begin{equation*}
    \bigscalar{\frac{(q_k)_j-q_j}{t_k}}{q_j} \le \frac{\alpha_j}{t_k}(\|(q_k)_j\|-\|q_j\|) < \alpha_j\left(\frac{(\alpha_k)_j-\alpha_j}{t_k}\right),
  \end{equation*}
  which implies that $\scalar{(\delta_q)_j}{q_j} \le \alpha_j(\delta_\alpha)_j$.

  Finally, approximating through a biactive set sequence, we know again $(\Kbb\delta_u)_j=0$ and, estimating the product
  \begin{equation*}
    \bigscalar{\frac{(q_k)_j-q_j}{t_k}}{q_j} \le \frac{\alpha_j}{t_k}(\|(q_k)_j\|-\|q_j\|) = \alpha_j\left(\frac{(\alpha_k)_j-\alpha_j}{t_k}\right),
  \end{equation*}
  we get
  \begin{equation}\label{eq:tan_cone_b_2}
    \scalar{(\delta_q)_j}{q_j} \le \alpha_j(\delta_\alpha)_j.
  \end{equation}

  \item[Case 4: $\bm{j\in\Ical_0.}$]
  We can approximate a point in the zero-inactive set by sequences in the inactive set. Therefore, considering a sequence $(\Kbb u_k)_j = (\Kbb u)_j + t_k v$ with $v\in\R^2$ arbitrary we have
  \begin{equation}\label{eq:kdu_zero_inactive}
    (\Kbb\delta_u)_j = \lim_{t_k\to 0}\frac{(\Kbb u_k)_j-(\Kbb u)_j}{t_k} = v.
  \end{equation}
  Since we took $v\in\R^2$ arbitrary, it follows that $(\Kbb\delta_u)\in\R^2$. Furthermore, since $q_j = \alpha_j((\Kbb u)_j/\|(\Kbb u)_j\|)$ and $\alpha_j=0$, then $q_j=0$ in this index set. Considering the sequence $(\alpha_k)_j= \alpha_j + t_k h$ we obtain
  \begin{equation}\label{eq:tan_cone_I0_2}
    (\delta_q)_j = \lim_{t_k\to 0}\frac{(q_k)_j}{t_k} = \lim_{t_k\to 0}h\frac{(\Kbb u_k)_j}{\|(\Kbb u_k)_j\|} = (\delta_\alpha)_j\frac{(\Kbb u)_j}{\|(\Kbb u)_j\|}.
  \end{equation}
  Since $\alpha_j=0$, the only valid approximations are the ones coming from positive elements, thus $(\delta_\alpha)_j\ge 0$. Another possible approximation can be done through zero-inactive points, meaning $(q_k)_j=0$ and $(\alpha_k)_j=0$. This case implies $(\delta_q)_j=0$, $(\delta_\alpha)_j=0$ and $(\Kbb\delta_u)_j\in\R^2$, which is a particular case of \cref{eq:tan_cone_I0_2}.

  \item[Case 5: $\bm{j\in\Tcal.}$]
  There are four ways to approach a triactive point. As in the zero-inactive case, all valid approximations come from $(\alpha_k)_j\ge 0$, which again implies $(\delta_\alpha)_j\ge 0$. Similarly to the zero-inactive case \cref{eq:kdu_zero_inactive}, we also get $(\Kbb\delta_u)_j\in\R^2$.

  Approximating through a sequence in the inactive set, we obtain
  \begin{equation}\label{eq:tan_cone_I0_1}
    (\delta_q)_j = \lim_{t_k\to 0}\frac{1}{t_k}\left((\alpha_k)_j\frac{(\Kbb u_k)_j}{\|(\Kbb u_k)_j\|}\right) = (\delta_\alpha)_j\frac{(\Kbb\delta_u)_j}{\|(\Kbb\delta_u)_j\|}.
  \end{equation}
  Likewise, the approximation can be made using zero-inactive points. In this case $(\Kbb\delta_u)_j \neq 0$, $(q_k)_j=0$ and $(\alpha_k)_j=0$. From this sequence we can derive $(\delta_q)_j=0$, $(\delta_\alpha)_j=0$ and $(\Kbb \delta_u)_j\in\R^2$, which is included in \cref{eq:tan_cone_I0_1}.

  Moving forward, we can also approximate through strongly active points, i.e., $(\Kbb u_k)_j=0$ and $\|(q_k)_j\|<(\alpha_k)_j$. From this sequence we know $(\Kbb \delta_u)_j=0$ and $(\delta_\alpha)_j\ge 0$ and the dual variable will have the following form
  \begin{equation}\label{eq:tan_cone_I0_3}
    \|(\delta_q)_j\| = \left\|\lim_{t_k\to 0}\frac{(q_k)_j}{t_k}\right\| = \lim_{t_k\to 0} \frac{1}{t_k}\|(q_k)_j\| \le \lim_{t_k\to 0}\frac{1}{t_k}(\alpha_k)_j = (\delta_\alpha)_j,
  \end{equation}
  yielding $\|(\delta_q)_j\|\le(\delta_\alpha)_j$.

  Finally, we consider an approximation through biactive points, meaning $(\Kbb u_k)_j=0$ and $\|(q_k)_j\|=(\alpha_k)_j$. We the obtain $(\Kbb \delta_u)_j =0$, $(\delta_\alpha)_j\ge 0$ and
  \begin{equation*}
    \|(\delta_q)_j\| = \left\|\lim_{t_k\to 0}\frac{(q_k)_j}{t_k}\right\| = \lim_{t_k\to 0} \frac{1}{t_k}\|(q_k)_j\| = \lim_{t_k\to 0}\frac{1}{t_k}(\alpha_k)_j = (\delta_\alpha)_j,
  \end{equation*}
  which is a particular case of \cref{eq:tan_cone_I0_3}. Consequently, $\|(\delta_q)_j\|=(\delta_\alpha)_j$.
\end{description}
\end{proof}

\begin{lemma}\label{lemma:frechet_cone}
  The Fréchet normal cone to the graph of $Q$, described in \cref{eq:generalized_equation_2}, is given by
  \begin{multline}\label{eq:frechet_normal_cone}
    N^F_{\gph Q}(\alpha,u,\Kbb^\top q) =\\ \left\{(\vartheta,\Kbb^\top \mu,p):
    \left\{
    \begin{aligned}
    & \mu_j + \alpha_jT_j(\Kbb p)_j = 0,&&\text{if }j\in\Ical,\\
    & \vartheta_j+\frac{\scalar{(\Kbb u)_j}{(\Kbb p)_j}}{\|(\Kbb u)_j\|}=0,&&\text{if }j\in\Ical,\\
    & \vartheta_j=0,\;(\Kbb p)_j=0  &&\text{if }j\in\Acal_s,\\
    & \vartheta_j+c\alpha_j = 0,(\Kbb p)_j = c q_j (c\ge 0),\scalar{\mu_j}{q_j} \le 0,&&\text{if }j\in\Bcal,\\
    & \vartheta_j + \frac{\scalar{(\Kbb u)_j}{(\Kbb p)_j}}{\|(\Kbb u)_j\|} \le 0,\quad \mu_j = 0,&&\text{if }j\in\Ical_0,\\
    & \vartheta_j\le 0,\;(\Kbb p)_j = 0,\;\mu_j=0,&&\text{if }j\in\Tcal.
    \end{aligned}
    \right.
    \right\}
  \end{multline}
\end{lemma}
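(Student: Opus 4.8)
The plan is to compute the Fréchet normal cone as the polar of the Bouligand tangent cone, exploiting the decomposition into index sets already established in \Cref{lemma:tangent_cone}. By \Cref{def:frechet_normal_cone} we have $N^F_{\gph Q}(\alpha,u,\Kbb^\top q)=[T_{\gph Q}(\alpha,u,\Kbb^\top q)]^\circ$, and since the tangent cone from \Cref{lemma:tangent_cone} is a direct product over the index blocks $\Ical,\Acal_s,\Bcal,\Ical_0,\Tcal$ (each constraint only couples the $j$-th components $(\delta_\alpha)_j,(\Kbb\delta_u)_j,(\delta_q)_j$ for $j$ in that set), the polar also splits componentwise. So I would reduce the problem to polarizing a cone in $\R\times\R^2\times\R^2$ (the variables $((\delta_\alpha)_j,(\Kbb\delta_u)_j,(\delta_q)_j)$) for a representative $j$ in each of the five sets, and record the dual variables as $(\vartheta_j,(\Kbb\mu)_j,p_j)$ — though note the stated result expresses the normal cone element as $(\vartheta,\Kbb^\top\mu,p)$, so one must be careful that the dual pairing is with $\delta_u$ (not $\Kbb\delta_u$) and with $\Kbb^\top\delta_q$ (not $\delta_q$); this means using $\scalar{\Kbb^\top\mu}{\delta_u}=\scalar{\mu}{\Kbb\delta_u}$ and $\scalar{p}{\Kbb^\top\delta_q}=\scalar{\Kbb p}{\delta_q}$, so the polarity computations are genuinely carried out in the $(\Kbb p)_j$, $\mu_j$ variables as displayed.

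I would then handle the five cases in turn. For $j\in\Ical$ the tangent block is a linear subspace defined by $(\delta_q)_j=(\delta_\alpha)_j\frac{(\Kbb u)_j}{\|(\Kbb u)_j\|}+\alpha_jT_j(\Kbb\delta_u)_j$; its polar is the orthogonal complement, and writing out orthogonality of $(\vartheta_j,\mu_j,(\Kbb p)_j)$ against the spanning directions (vary $(\delta_\alpha)_j$, vary $(\Kbb\delta_u)_j$, with $(\delta_q)_j$ slaved) gives, using self-adjointness of $T_j$, the two equations $\mu_j+\alpha_jT_j(\Kbb p)_j=0$ and $\vartheta_j+\scalar{(\Kbb u)_j}{(\Kbb p)_j}/\|(\Kbb u)_j\|=0$. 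For $j\in\Acal_s$ the tangent block is $\{(\delta_\alpha)_j\in\R,(\Kbb\delta_u)_j=0,(\delta_q)_j\in\R^2\}$, whose polar is $\{\vartheta_j=0,(\Kbb p)_j\text{ free},\mu_j=0\}$ — wait, I must double-check: the pairing with $(\Kbb\delta_u)_j=0$ leaves $(\Kbb p)_j$ unconstrained a priori, but the pairing forces $\vartheta_j=0$ and the $\delta_q$-freedom forces $\mu_j=0$; the claim additionally asserts $(\Kbb p)_j=0$, which is correct because polarity is taken in the original $\delta_u$ variable and $\Kbb$ has full column rank on the relevant subspace — this subtlety is worth flagging. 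For $j\in\Ical_0$ the tangent block is $\{(\delta_\alpha)_j\ge0,(\Kbb\delta_u)_j\in\R^2,(\delta_q)_j=(\delta_\alpha)_j\frac{(\Kbb u)_j}{\|(\Kbb u)_j\|}\}$; polarity against a half-line in $(\delta_\alpha)_j$ gives the inequality $\vartheta_j+\scalar{(\Kbb u)_j}{(\Kbb p)_j}/\|(\Kbb u)_j\|\le0$, and against all of $\R^2$ in $(\Kbb\delta_u)_j$ gives $\mu_j=0$.

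The genuinely delicate cases are $j\in\Bcal$ and $j\in\Tcal$, because there the tangent cone is a non-convex union of two pieces, so its polar is the intersection of the two polars. For $j\in\Bcal$ the first branch is $\{(\Kbb\delta_u)_j=0,\scalar{(\delta_q)_j}{q_j}\le\alpha_j(\delta_\alpha)_j\}$ (with $(\delta_\alpha)_j,(\delta_q)_j$ otherwise free) and the second is the ray family $\{(\Kbb\delta_u)_j=\tilde c q_j,\,\tilde c\ge0,\,\scalar{(\delta_q)_j}{q_j}=\alpha_j(\delta_\alpha)_j\}$; polarizing the first branch yields a condition of the form $(\vartheta_j,\mu_j,(\Kbb p)_j)$ lies in the normal cone of that inequality-defined set, which produces $(\Kbb p)_j=cq_j$ with $c\ge0$ and $\vartheta_j+c\alpha_j=0$ (the multiplier $c$ coming from the single active inequality), together with $\mu_j=0$; polarizing the second branch then only adds $\scalar{\mu_j}{q_j}\le0$ and is consistent with $\mu_j=0$, so intersecting gives exactly the stated block — I would double check whether $\mu_j=0$ or merely $\scalar{\mu_j}{q_j}\le 0$ survives, since the first branch fixes $(\Kbb\delta_u)_j=0$ and hence imposes nothing on $\mu_j$ beyond what the pairing structure forces, and it is the $\scalar{\mu_j}{q_j}\le0$ from the ray branch that is the binding constraint; the statement's form with $\scalar{\mu_j}{q_j}\le 0$ (and no $\mu_j=0$) is the one I would aim to reproduce. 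For $j\in\Tcal$ there are four tangent branches; the inactive and zero-inactive branches both allow $(\Kbb\delta_u)_j$ to range over a full neighborhood of $0$ in $\R^2$ while forcing $(\delta_q)_j$ to a rank-one-in-direction expression, and the strongly active / biactive branches force $(\Kbb\delta_u)_j=0$ with $\|(\delta_q)_j\|\le(\delta_\alpha)_j$; intersecting the four polars collapses everything to $(\Kbb p)_j=0$, $\mu_j=0$, $\vartheta_j\le0$. The main obstacle I anticipate is precisely this intersection-of-non-convex-polars bookkeeping in the biactive and triactive cases: one must argue carefully that directions available in one branch of the tangent cone kill degrees of freedom in the normal cone that the other branch alone would leave open, and conversely that no spurious constraints are introduced. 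I would organize this by, for each case, first polarizing each branch separately via standard rules (polar of a subspace = orthogonal complement; polar of $\{x:\scalar{a}{x}\le0\}=\R_+a$; polar of a product = product of polars) and then taking the intersection, using the collinearity relation $\|q_j\|=\alpha_j$ in $\Bcal$ and $q_j=0$ in $\Tcal$ to simplify.
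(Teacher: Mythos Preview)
Your overall approach---take the polar of the tangent cone, use that the polar of a union is the intersection of the polars, and work blockwise over the five index sets---is exactly what the paper does. The cases $j\in\Ical$, $j\in\Ical_0$, and $j\in\Bcal$ (after your self-correction that the first branch leaves $\mu_j$ free and the ray branch supplies $\scalar{\mu_j}{q_j}\le 0$) match the paper's argument.

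There is, however, a genuine bookkeeping error in your $\Acal_s$ case. You correctly identify the pairing in your opening paragraph as
\[
\scalar{\vartheta}{\delta_\alpha}+\scalar{\mu}{\Kbb\delta_u}+\scalar{\Kbb p}{\delta_q},
\]
but then in the $\Acal_s$ block you swap the roles: you write that ``the $\delta_q$-freedom forces $\mu_j=0$'' and that ``the pairing with $(\Kbb\delta_u)_j=0$ leaves $(\Kbb p)_j$ unconstrained.'' It is the other way around. Since $(\delta_q)_j$ pairs with $(\Kbb p)_j$, the freedom $(\delta_q)_j\in\R^2$ forces $(\Kbb p)_j=0$ \emph{directly}; and since $\mu_j$ pairs with $(\Kbb\delta_u)_j=0$, it is $\mu_j$ that remains unconstrained. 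No ``full column rank of $\Kbb$'' argument is needed, and your conclusion $\mu_j=0$ is not part of the Fr\'echet cone on $\Acal_s$ (the lemma leaves $\mu_j$ free there). This same swap would propagate into the $\Bcal$ and $\Tcal$ intersections if you are not careful, so fix the labeling before carrying out those cases in detail.

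Finally, for $j\in\Tcal$ you only assert the result. This is the case that requires the most explicit computation: the first branch of the tangent cone has the nonlinear constraint $(\delta_q)_j=(\delta_\alpha)_j\frac{(\Kbb\delta_u)_j}{\|(\Kbb\delta_u)_j\|}$, and the paper extracts the conditions by first specializing $(\delta_\alpha)_j=0$ to obtain $\mu_j=0$, then testing with $\pm(\Kbb\delta_u)_j$ to control $(\Kbb p)_j$ and $\vartheta_j$, and only afterwards intersecting with the polar of the second branch. You should write this out rather than claim the collapse.
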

\begin{proof}
  Using the definition of the Fréchet normal cone for this problem
  \begin{equation*}
    N^F_{\gph Q}(\alpha,u,\Kbb^\top q) = \{(\vartheta,\Kbb^\top \mu,p)\in\R^n\times\R^m\times\R^m:\scalar{(\vartheta,\Kbb^\top \mu,p)}{(\delta_\alpha,\delta_u,\Kbb^\top \delta_q)}\le 0\}.
  \end{equation*}
  Indeed, we can rewrite this inequality as
  \begin{equation*}
    \sum_{j=1}^n ((\delta_\alpha)_j\vartheta_j + \scalar{(\Kbb \delta_u)_j}{\mu_j}+\scalar{(\delta_q)_j}{(\Kbb p)_j})\le 0.
  \end{equation*}
  Using this representation, along with the tangent cone characterization from \cref{lemma:tangent_cone}, we analyze the different cases according to their index set.
  \begin{description}
    \item[Case 1: $\bm{j\in\Ical.}$]
    Using the characterization of the elements in the tangent cone, we have
  \begin{equation*}
    (\delta_\alpha)_j\vartheta_j+\scalar{(\Kbb \delta_u)_j}{\mu_j} + \bigscalar{(\delta_\alpha)_j\frac{(\Kbb u)_j}{\|(\Kbb u)_j\|}}{(\Kbb p)_j} + \scalar{\alpha_j T_j(\Kbb \delta_u)_j}{(\Kbb p)_j}\le 0.
  \end{equation*}
  Rearranging the terms and using the symmetry of $T_j$,
  \begin{equation*}
    (\delta_\alpha)_j \left(\vartheta_j+\frac{\scalar{(\Kbb u)_j}{(\Kbb p)_j}}{\|(\Kbb u)_j\|}\right) + \scalar{(\Kbb \delta_u)_j}{\mu_j+\alpha_jT_j(\Kbb p)_j}\le 0.
  \end{equation*}
  Since $(\Kbb \delta_u)_j\in\R^2$ and $(\delta_\alpha)_j\in\R$, it necessarily must hold
  \begin{equation*}
    \vartheta_j+\frac{\scalar{(\Kbb u)_j}{(\Kbb p)_j}}{\|(\Kbb u)_j\|}=0,\qquad \mu_j+\alpha_jT_j(\Kbb p)_j = 0.
  \end{equation*}

  \item[Case 2: $\bm{j\in\Acal_s.}$]
     In this index set we know that $(\Kbb \delta_u)_j=0$, $(\delta_\alpha)_j\in\R$ and $(\delta_q)_j\in\R^2$. Consequently the product reads
  \begin{equation*}
    (\delta_\alpha)_j\vartheta_j + \scalar{(\delta_q)_j}{(\Kbb p)_j}\le 0,
  \end{equation*}
  and we obtain that $(\Kbb p)_j=0$ and $\vartheta_j=0$.

  \item[Case 3: $\bm{j\in\Bcal.}$]
    In this index set there are two conditions on the normal directions. For the first one we take $(\Kbb \delta_u)_j=0$ and the cone inequality reads
    \begin{equation}\label{eq:frechet_bcal_6}
      (\delta_\alpha)_j\vartheta_j + \scalar{(\delta_q)_j}{(\Kbb p)_j}\le 0,\quad\forall (\delta_\alpha)_j,(\delta_q)_j\text{ s.t. }\scalar{(\delta_q)_j}{q_j}\le \alpha_j(\delta_\alpha)_j.
    \end{equation}
    Taking in particular, $(\delta_\alpha)_j=0$ we get
    \begin{equation*}
      \scalar{(\delta_q)_j}{(\Kbb p)_j}\le 0,\forall (\delta_q)_j\text{ s.t. }\scalar{(\delta_q)_j}{q_j}\le 0.
    \end{equation*}
    Therefore, $(\Kbb p)_j = c q_j$ with $c\ge 0$. Using this result in \cref{eq:frechet_bcal_6} for the particular case $(\delta_\alpha)_j = \frac{1}{\alpha_j}\scalar{(\delta_q)_j}{q_j}$, we obtain
    \begin{equation*}
      0\ge(\delta_\alpha)_j\vartheta_j + \scalar{(\delta_q)_j}{c q_j} =(\delta_\alpha)_j(\vartheta_j + c\alpha_j),
    \end{equation*}
    from where $\vartheta_j +c\alpha_j=0$ holds. The resulting cone reads
    \begin{equation}\label{eq:frechet_bcal_1}
      \vartheta_j+c\alpha_j=0,\quad (\Kbb p)_j = c q_j,\quad c\ge 0, \quad \mu_j\in\R^2.
    \end{equation}

    For the second case we take $(\Kbb\delta_u) = c q_j\;(c\ge 0)$ in \cref{eq:frechet_bcal_6},
    \begin{equation}\label{eq:frechet_bcal_7}
      (\delta_\alpha)_j\vartheta_j + \scalar{c q_j}{\mu_j}+\scalar{(\Kbb p)_j}{(\delta_q)_j}\le 0,\forall (\delta_\alpha)_j\in\R,\;(\delta_q)_j\text{ s.t. }\scalar{(\delta_q)_j}{q_j}= \alpha_j(\delta_\alpha)_j,
    \end{equation}
    Again, considering $(\delta_\alpha)_j=0$ and $(\Kbb\delta_u)_j=0$, we get
    \begin{equation*}
      \scalar{(\delta_q)_j}{(\Kbb p)_j}\le 0,\forall (\delta_q)_j\text{ s.t. }\scalar{(\delta_q)_j}{q_j}= 0.
    \end{equation*}
    Consequently, $(\Kbb p)_j = c q_j$ with $c\in\R$. Using this result in \cref{eq:frechet_bcal_7}, while keeping $(\delta_\alpha)_j=0$, yields $\tilde{c}\scalar{q_j}{\mu_j}\le 0$. Thanks to the positiveness of $\tilde{c}$ we then get $\scalar{q_j}{\mu_j}\le 0$. Now, applying all previous results in \cref{eq:frechet_bcal_7} we get
    \begin{equation*}
      0\ge(\delta_\alpha)_j\vartheta_j +\scalar{c q_j}{(\delta_q)_j} = (\delta_\alpha)_j\vartheta_j +c\alpha_j(\delta_\alpha)_j,\forall (\delta_\alpha)_j\in\R, c\in\R,
    \end{equation*}
    yielding $\vartheta_j\in\R$. Therefore, the resulting cone for the second case reads
    \begin{equation}\label{eq:frechet_bcal_2}
      \vartheta_j \in\R,\quad (\Kbb p)_j = c q_j,\quad c\in\R.
    \end{equation}
    Finally, considering both cases we obtain
  \begin{equation*}
    \vartheta_j+c\alpha_j = 0\wedge\scalar{\mu_j}{q_j} \le 0 \wedge (\Kbb p)_j = c q_j \wedge c\ge 0.
  \end{equation*}

  \item[Case 4: $\bm{j\in\Ical_0.}$]
  By using the characterization of the tangent cone in this index set, we have
  \begin{equation*}
    (\delta_\alpha)_j\left(\vartheta_j+\frac{\scalar{(\Kbb u)_j}{(\Kbb p)_j}}{\|(\Kbb u)_j\|}\right) + \scalar{(\Kbb \delta_u)_j}{\mu_j} \le 0.
  \end{equation*}
  This relationship must hold for all $(\Kbb \delta_u)_j\in\R^2$ and $(\delta_\alpha)_j\ge 0$, which implies
  \begin{equation*}
    \vartheta_j + \frac{\scalar{(\Kbb u)_j}{(\Kbb p)_j}}{\|(\Kbb u)_j\|}\le 0,\quad \scalar{(\Kbb \delta_u)_j}{\mu_j} \le 0.
  \end{equation*}
  Since $(\Kbb \delta_u)_j\in\R^2$ we get $\mu_j=0$.

  \item[Case 5: $\bm{j\in\Tcal.}$]
   For the first case in this index set we know the elements of the tangent cone satisfy
   \begin{equation*}
    (\delta_q)_j = (\delta_\alpha)_j\frac{(\Kbb \delta_u)_j}{\|(\Kbb \delta_u)_j\|},\;(\delta_\alpha)_j\ge 0,\;(\Kbb\delta_u)_j\in\R^2\backslash\{0\}.
   \end{equation*}
   Replacing these terms into the normal cone inequality,
   \begin{equation*}
     (\delta_\alpha)_j\left(\vartheta_j+\frac{\scalar{(\Kbb p)_j}{(\Kbb\delta_u)_j}}{\|(\Kbb\delta_u)_j\|}\right) + \scalar{\mu_j}{(\Kbb\delta_u)_j}\le 0,\;\forall (\delta_\alpha)_j\ge 0,(\Kbb\delta_u)_j\in\R^2\backslash\{0\}.
   \end{equation*}
   In particular, for $(\delta_\alpha)_j = 0$, we get that $\scalar{\mu_j}{(\Kbb\delta_u)_j}\le 0$, for all $(\Kbb \delta_u)_j\in\R^2\backslash\{0\}$, which implies that $\mu_j=0$. Moreover, thanks to the positiveness of $(\delta_\alpha)_j\ge 0$, we get
   \begin{equation}\label{eq:frechet_bcal_4}
    \vartheta_j + \frac{\scalar{(\Kbb p)_j}{(\Kbb\delta_u)_j}}{\|(\Kbb\delta_u)_j\|}\le 0,\forall (\Kbb\delta_u)_j\in\R^2\backslash\{0\}.
   \end{equation}
   Testing this inequality with $\pm(\Kbb\delta_u)_j$, we get $\vartheta_j\le\scalar{(\Kbb p)_j}{(\Kbb\delta_u)_j}\le -\vartheta_j$, which implies that the product $\scalar{(\Kbb p)_j}{(\Kbb\delta_u)_j}=0$ for all $(\Kbb\delta_u)_j\in\R^2\backslash\{0\}$, consequently $(\Kbb p)_j=0$. Using this result in \cref{eq:frechet_bcal_4} yields also $\vartheta_j\le 0$.

   Now, regarding the second condition, we know $\|(\delta_q)_j\| - (\delta_\alpha)_j\le 0, ~(\Kbb \delta_u)_j =0$ and $(\delta_\alpha)_j\ge 0.$
   Since $(\Kbb \delta_u)_j=0$, it follows $\mu_j\in\R^2$. Using the normal cone inequality,
   \begin{equation*}
     0 \ge (\delta_\alpha)_j\vartheta_j + \scalar{(\delta_q)_j}{(\Kbb p)_j}\ge (\delta_\alpha)_j\vartheta_j - \|(\delta_q)_j\|\|(\Kbb p)_j\|\ge (\delta_\alpha)_j\vartheta_j - (\delta_\alpha)_j\|(\Kbb p)_j\|,
   \end{equation*}
   from where we derive $(\delta_\alpha)_j(\vartheta_j - \|(\Kbb p)_j\|)\le 0$. Along with the positivity of $(\delta_\alpha)_j$, this implies $\vartheta_j \le \|(\Kbb p)_j\|$. Finally, considering both conditions, the result is obtained.
  \end{description}
\end{proof}

\begin{lemma}\label{lemma:mordukhovich_cone}
  The Mordukhovich normal cone to the graph of $Q$, described in \cref{eq:generalized_equation_2}, is given by
\begin{multline*}
  N_{\gph Q}^{M}(\alpha,u,\mathbb K^\top q) =\\ \left\{ (\vartheta,\mathbb K^\top \mu,p):
  \left \{
  \begin{aligned}
    & \mu_j + \alpha_jT_j(\Kbb p)_j = 0,&&\text{if }j\in\Ical,\\
    & \vartheta_j+\frac{\scalar{(\Kbb u)_j}{(\Kbb p)_j}}{\|(\Kbb u)_j\|}=0,&&\text{if }j\in\Ical,\\
    & \vartheta_j=0,(\Kbb p)_j=0, &&\text{if }j\in\Acal_s,\\
    & \left.
    \begin{aligned}
      & \vartheta_j=0,(\Kbb p)_j=0,\;\vee\\
      & (\mathbb K p)_j=c q_j (c\in\R), ~\langle \mu_j,q_j \rangle =0,\;\vee \\
      & \vartheta_j+c\alpha_j=0,(\mathbb K p)_j = c q_j (c\ge 0),\langle \mu_j,q_j \rangle\le 0.
    \end{aligned} \right\}  &&\text{if }j\in\Bcal,\\
    & \vartheta_j + \frac{\scalar{(\Kbb u)_j}{(\Kbb p)_j}}{\|(\Kbb u)_j\|} \le 0,\quad \mu_j = 0,&&\text{if }j\in\Ical_0,\\
    & \left.
    \begin{aligned}
      & \vartheta_j=0,(\Kbb p)_j = 0,\;\vee \\
      & \vartheta_j\le\|(\Kbb p)_j\|,\mu_j=0,\;\vee \\
      & |\vartheta_j|\le \|(\Kbb p)_j\|,\scalar{\mu_j}{(\Kbb p)_j}\le 0
    \end{aligned} \right\}  &&\text{if }j\in\Tcal.
  \end{aligned} \right.
  \right\}
\end{multline*}
\end{lemma}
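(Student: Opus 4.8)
The plan is to obtain $N^M_{\gph Q}$ directly from \Cref{def:mordukhovich_normal_cone}, i.e.\ as the outer limit $N^M_{\gph Q}(\bar z)=\{d:\exists\, z_k\to\bar z,\ z_k\in\gph Q,\ d_k\in N^F_{\gph Q}(z_k),\ d_k\to d\}$, feeding in the Fr\'echet cone already computed in \Cref{lemma:frechet_cone}. Since the Fr\'echet conditions of \Cref{lemma:frechet_cone} decouple over the index $j\in\{1,\dots,n\}$ — exactly as the tangent-cone analysis did in \Cref{lemma:tangent_cone} — the whole computation reduces to the following component-wise question: for $\bar z=(\alpha,u,\Kbb^\top q)\in\gph Q$ and a fixed index $j$, into which of the classes $\Ical,\Acal_s,\Bcal,\Ical_0,\Tcal$ can the $j$-th index of a feasible point $z_k$ close to $\bar z$ fall, and what do the associated $j$-th Fr\'echet conditions converge to? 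The $j$-th block of $N^M_{\gph Q}(\bar z)$ is then the union of these limit sets over all admissible ``modes'', and the claimed cone is the conjunction over $j$ of these blocks.

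For the three stable classes the answer is immediate. If $j\in\Ical$, the conditions $(\Kbb u)_j\neq0$, $\alpha_j>0$ are open and $q_j$ is uniquely determined, so the $j$-th index of $z_k$ stays in $\Ical$ and the $j$-th Fr\'echet conditions pass to the limit by continuity of $T_j$; likewise for $j\in\Acal_s$, where $\|q_j\|<\alpha_j$ is open and on $\gph Q$ forces $(\Kbb u)_j=0$. If $j\in\Ical_0$, a neighbouring index lies in $\Ical_0$ or in $\Ical$; but in the latter case $\|(\Kbb u_k)_j\|$ stays bounded away from $0$ while $(\alpha_k)_j\to0$, so $T_j$ stays bounded along the sequence, $\mu_j^k\to0$, and $\vartheta_j^k\to-\langle(\Kbb u)_j,(\Kbb p)_j\rangle/\|(\Kbb u)_j\|$, which is absorbed into the class-$\Ical_0$ inequality. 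Hence on these three blocks $N^M$ and $N^F$ coincide, matching the statement.

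The substance is at the biactive and triactive indices. For $j\in\Bcal$, where $\alpha_j>0$ persists so the admissible modes are $\Acal_s$, $\Bcal$ and $\Ical$: the $\Acal_s$-mode gives $\vartheta_j=0$, $(\Kbb p)_j=0$, $\mu_j$ free; the $\Bcal$-mode gives, by continuity of the class-$\Bcal$ Fr\'echet block, $\vartheta_j+c\alpha_j=0$, $(\Kbb p)_j=cq_j$ ($c\ge0$), $\langle\mu_j,q_j\rangle\le0$. The delicate mode is $\Ical$: here $\|(\Kbb u_k)_j\|\to0$, so $T_j$ blows up like $\|(\Kbb u_k)_j\|^{-1}$ times the projection onto $(\Kbb u_k)_j^{\perp}$; for $\mu_j^k=-(\alpha_k)_jT_j(\Kbb p)_j^k$ to converge, $(\Kbb p)_j^k$ must become collinear with $(\Kbb u_k)_j$, equivalently with $q_j$, in the limit — yielding $(\Kbb p)_j=cq_j$ ($c\in\R$), $\vartheta_j+c\alpha_j=0$ — while $\mu_j$ is constrained only to lie in $q_j^{\perp}$, i.e.\ $\langle\mu_j,q_j\rangle=0$, any such vector being attainable by tuning the decay rate of the $(\Kbb u_k)_j^{\perp}$-component of $(\Kbb p)_j^k$. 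For $j\in\Tcal$, where $q_j=\alpha_j=0=(\Kbb u)_j$ and all five classes are admissible as neighbouring modes, the same mechanism reappears, but now \emph{both} $(\alpha_k)_j\to0$ and $\|(\Kbb u_k)_j\|\to0$ in the $\Ical$-mode, so the ratio $\lambda_k:=(\alpha_k)_j/\|(\Kbb u_k)_j\|$ may tend to any value in $[0,+\infty]$; a case split on $\lim\lambda_k$ and on $\lim(\Kbb u_k)_j/\|(\Kbb u_k)_j\|$, together with the easy $\Tcal$-, $\Ical_0$-, $\Acal_s$- and $\Bcal$-modes, produces precisely the three alternatives $\{\vartheta_j=0,(\Kbb p)_j=0\}$, $\{\vartheta_j\le\|(\Kbb p)_j\|,\mu_j=0\}$, $\{|\vartheta_j|\le\|(\Kbb p)_j\|,\langle\mu_j,(\Kbb p)_j\rangle\le0\}$. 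It then remains to check that the resulting set is closed (a finite union of closed pieces), that ``$\subseteq$'' holds because every admissible sequence has been accounted for, and that ``$\supseteq$'' holds by the explicit sequences constructed above for each listed tuple.

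I expect the blow-up/decay-rate analysis of $T_j$ near degenerate indices — in particular the triactive case split on $(\alpha_k)_j/\|(\Kbb u_k)_j\|$, and the accompanying construction of realizing sequences needed for the ``$\supseteq$'' inclusion — to be the main obstacle; the reduction to a per-index problem and the handling of $\Ical$, $\Acal_s$, $\Ical_0$ are essentially routine once \Cref{lemma:frechet_cone} is in hand.
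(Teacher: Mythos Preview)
Your plan is correct and follows essentially the same route as the paper: both compute $N^M$ as the outer limit of the Fr\'echet cones from \Cref{lemma:frechet_cone}, decoupled over $j$, by enumerating the admissible neighbouring modes at each index class and passing to the limit. The only notable difference is that where you propose a blow-up analysis of $T_j$ and a case split on $(\alpha_k)_j/\|(\Kbb u_k)_j\|$, the paper handles the inactive-mode limits at $\Bcal$ and $\Tcal$ more directly by multiplying the Fr\'echet identities through by $(\Kbb p_k)_j$ and $(q_k)_j$ and applying Cauchy--Schwarz, and it only spells out the $\subseteq$ direction, leaving the realizing sequences implicit.
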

\begin{proof}
  Let us recall the definition of the Mordukhovich normal cone for our problem
  \begin{multline*}
    N^M_{\gph Q}(\alpha,u,\Kbb^\top q) = \{(\vartheta,\Kbb^\top \mu,p):(\vartheta_k,\Kbb^\top \mu_k,p_k)\in N^F_{\gph Q}(\alpha_k,u_k,\Kbb^\top q_k):\\(\vartheta_k,\Kbb^\top \mu_k,p_k)\to (\vartheta,\Kbb^\top \mu,p),(\alpha_k,u_k,\Kbb^\top q_k)\to (\alpha,u,\Kbb^\top q)\}.
  \end{multline*}
  Considering limiting sequences to the inactive, strongly active and zero-inactive sets, the same directions as for the Fréchet normal cone are obtained. The differences lie in the biactive and triactive sets, where several approximations may be considered.
  \begin{description}
    \item[Case 1: $\bm{j\in\Bcal.}$]
      By taking approximation sequences in the inactive set, from \cref{lemma:frechet_cone} we know
      \begin{equation}\label{eq:m_cone_b0_6}
        0 = (\mu_k)_j + (\alpha_k)_j\frac{(\Kbb p_k)_j}{\|(\Kbb u_k)_j\|} - (\alpha_k)_j\frac{(\Kbb u_k)\scalar{((\Kbb u_k)_j)}{(\Kbb p_k)_j}}{\|(\Kbb u_k)_j\|^3}.
      \end{equation}
      Multiplying \cref{eq:m_cone_b0_6} with $(\Kbb p_k)_j$ yields
      \begin{equation*}
        \scalar{(\mu_k)_j}{(\Kbb p_k)_j} = (\alpha_k)_j\frac{\scalar{(\Kbb u_k)_j}{(\Kbb p_k)_j}^2}{\|(\Kbb u_k)_j\|^3} - (\alpha_k)_j\frac{\scalar{(\Kbb p_k)_j}{(\Kbb p_k)_j}}{\|(\Kbb u_k)_j\|}
      \end{equation*}
      Again, multiplying by $(\alpha_k)_j\|(\Kbb u_k)_j\|$ on both sides and recalling that $(q_k)_j = (\alpha_k)_j\frac{(\Kbb u_k)_j}{\|(\Kbb u_k)_j\|}$, we get
      \begin{equation*}
        (\alpha_k)_j\|(\Kbb u_k)_j\|\scalar{(\mu_k)_j}{(\Kbb p_k)_j} = \scalar{(q_k)_j}{(\Kbb p_k)_j}^2 - (\alpha_k)_j^2\|(\Kbb p_k)_j\|^2.
      \end{equation*}
      Taking the limit as $k\to\infty$ and recalling $(\alpha_k)_j=\|(q_k)_j\|$ in this index set, we obtain
      \begin{equation*}
        \scalar{q_j}{(\Kbb p)_j}^2 = \|q_j\|^2\|(\Kbb p)_j\|^2,
      \end{equation*}
      which implies that $(\Kbb p)_j = cq_j(c\in\R)$. Now, multiplying \cref{eq:m_cone_b0_6} with $(q_k)_j$ we get the following product
      \begin{align*}
      \scalar{(\mu_k)_j}{(q_k)_j} &= (\alpha_k)_j\frac{\scalar{(q_k)_j}{(\Kbb u_k)_j}\scalar{(\Kbb u_k)_j}{(\Kbb p_k)_j}}{\|(\Kbb u_k)_j\|^3} - (\alpha_k)_j\frac{\scalar{(q_k)_j}{(\Kbb p_k)_j}}{\|(\Kbb u_k)_j\|},\\
      &= (\alpha_k)_j^2\frac{\scalar{(\Kbb u_k)_j}{(\Kbb p_k)_j}}{\|(\Kbb u_k)_j\|^2} - (\alpha_k)_j^2\frac{\scalar{(\Kbb u_k)_j}{(\Kbb p_k)_j}}{\|(\Kbb u_k)_j\|^2}=0.\numberthis \label{eq:m_cone_b0_7}
      \end{align*}
      Taking the limit we get that $\scalar{\mu_j}{q_j}=0$.

      Regarding $\vartheta_j$ we have $(\alpha_k)_j(\vartheta_k)_j + \scalar{(q_k)_j}{(\Kbb p)_j} = 0.$
      Taking the limit as $k\to \infty$ we obtain
      \begin{equation*}
        0 = \alpha_j\vartheta_j + \scalar{q_j}{(\Kbb p)_j}= \alpha_j\vartheta_j + c\|q_j\|^2,(c\in\R),
      \end{equation*}
      which implies that $\vartheta_j=-c\alpha_j\in\R$.

      Finally, when taking the approximation through the strongly active and biactive sets, the cone directions coincide with the Fréchet normal ones.

    \item[Case 2: $\bm{j\in\Tcal.}$]
    This index set can be approximated by sequences belonging either to the inactive, biactive, strongly active or zero-inactive sets.
    Considering strongly active sequences, $(\Kbb p_k)_j=0$ and $(\vartheta_k)_j=0$. Taking the limit as $k\to\infty$ we get $(\Kbb p)_j=0$ and $\vartheta_j=0$ as well.

    Likewise, when taking biactive sequences we get $(\vartheta_k)_j+c(\alpha_k)_j=0$, $(\Kbb p_k)_j=c(q_k)_j(c\ge 0)$ and $\scalar{(\mu_k)_j}{(q_k)_j}\le 0$. Again, taking the limit as $k\to\infty$ we get, since $q_j=0$ and $\alpha_j=0$, that $\mu_j\in\R^2$, $(\Kbb p)_j=0$ and $\vartheta_j=0$.

    Furthermore, taking sequences in the zero-inactive set we have $(\mu_k)_j=0$, which implies that $\mu_j=0$. Using the Cauchy-Schwarz inequality we get
    \begin{equation*}
      0\ge (\vartheta_k)_j + \frac{\scalar{(\Kbb u_k)_j}{(\Kbb p_k)_j}}{\|(\Kbb u_k)_j\|} \ge (\vartheta_k)_j -\|(\Kbb p_k)_j\|.
    \end{equation*}
    Taking the limit as $k\to\infty$ yields $\vartheta_j\le\|(\Kbb p)_j\|$.

    When taking inactive sequences, we know that
    \begin{align}
      (\mu_k)_j + (\alpha_k)_j\left(\frac{I}{\|(\Kbb u_k)_j\|}-\frac{(\Kbb u_k)_j(\Kbb u_k)_j^\top}{\|(\Kbb u_k)_j\|^3}\right)(\Kbb p_k)_j &= 0,\label{eq:m_cone_b0_1}\\
      (\vartheta_k)_j + \frac{\scalar{(\Kbb u_k)_j}{(\Kbb p_k)_j}}{\|(\Kbb u_k)_j\|}&= 0.\label{eq:m_cone_b0_2}
    \end{align}
    Applying Cauchy-Schwarz in \cref{eq:m_cone_b0_2}, yields $|(\vartheta_k)_j|\le\|(\Kbb p_k)_j\|.$
    Now, multiplying \cref{eq:m_cone_b0_1} with $(\Kbb p_k)_j$ we obtain
    \begin{equation}\label{eq:m_cone_b0_4}
      \scalar{(\mu_k)_j}{(\Kbb p_k)_j} \le (\alpha_k)_j\left(\frac{\|(\Kbb p_k)_j\|^2\|(\Kbb u_k)_j\|^2}{\|(\Kbb u_k)_j\|^3}-\frac{\|(\Kbb p_k)_j\|^2}{\|(\Kbb u_k)_j\|}\right)=0.
    \end{equation}
    Taking the limit as $k\to\infty$ we get $\scalar{\mu_j}{(\Kbb p)_j}\le 0$, finishing the proof.
  \end{description}
\end{proof}

\begin{theorem}[M-Stationarity] \label{thm:m stationarity}
  Let $J:\R^m\to\R$ be continuously differentiable, $\phi:\R^m\to\R$ twice continuously differentiable and strongly convex, and $(\alpha^*,u^*,q^*)$ be a local solution to \cref{eq:bilevel_problem_reformulated}. Then there exist KKT multipliers $(\vartheta,\Kbb^\top \mu,p)$ such that
  \begin{subequations}\label{eq:m_stationary}
    \begin{align}
      \phi'(u^*)+\Kbb^\top q^* = 0,\\
      \scalar{q^*_j}{(\Kbb u^*)_j}-\alpha^*_j\|(\Kbb u^*)_j\| =0, &&&\forall j=1,\dots,n,\\
      \|q^*_j\| \le \alpha_j^*, &&&\forall j=1,\dots,n,\\
      \phi''(u^*)^\top p -\Kbb^\top \mu - \nabla J(u^{*})=0,\\
      \vartheta + \rho =0,\\
      0 \leq \alpha^* \perp \rho \geq 0,\\
      (\vartheta,\Kbb^\top \mu,p)\in N_{\gph Q}^M(\alpha^*,u^*,\Kbb^\top q^*)
    \end{align}
  \end{subequations}
\end{theorem}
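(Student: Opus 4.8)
The plan is to obtain the system \cref{eq:m_stationary} by specializing Outrata's result (\Cref{teo:outrata_cq}) to the GMPEC form \cref{eq:generalized_equation_2} of the reformulated problem \cref{eq:bilevel_problem_reformulated}. First I would match the data of \cref{eq:outrata_cq_problem}: set $x:=\alpha$, $y:=u$, $f(\alpha,u):=J(u,\bar u)$ (locally Lipschitz, in fact $C^1$), $F_1(\alpha,u):=\phi'(u)$ and $F_2(\alpha,u):=(\alpha,u)$ (both $C^1$, since $\phi\in C^2$), $Q$ the multifunction \cref{eq:set_valued_Q} (its graph is closed, being that of the subdifferential of $u\mapsto\sum_j\alpha_j\|(\Kbb u)_j\|$), and $\omega:=\R^n_+\times\R^m$. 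Lower-level feasibility $\phi'(u^*)+\Kbb^\top q^*=0$ identifies the base point $(F_2(\alpha^*,u^*),-F_1(\alpha^*,u^*))=(\alpha^*,u^*,\Kbb^\top q^*)\in\gph Q$, and since $\omega$ is convex, \Cref{rem:mordukhovich_convex} gives $N^M_\omega(\alpha^*,u^*)=N_{\R^n_+}(\alpha^*)\times\{0\}$.

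The core of the proof is checking the constraint qualification \cref{eq:outrata_cq_conditions}. Here $\nabla_\alpha F_2=[I_n;0]$, $\nabla_u F_2=[0;I_m]$, $\nabla_\alpha F_1=0$ and $\nabla_u F_1=\phi''(u^*)$, so, writing $w=(\vartheta,\Kbb^\top\mu)$ and $z=p$, the CQ hypothesis reduces to: $(\vartheta,\Kbb^\top\mu,p)\in N^M_{\gph Q}(\alpha^*,u^*,\Kbb^\top q^*)$, $\vartheta\in-N_{\R^n_+}(\alpha^*)$ and $\Kbb^\top\mu=\phi''(u^*)p$. Pairing the last identity with $p$ yields $\scalar{\phi''(u^*)p}{p}=\scalar{\mu}{\Kbb p}=\sum_{j=1}^n\scalar{\mu_j}{(\Kbb p)_j}$, and I would then read off from \Cref{lemma:mordukhovich_cone} that every summand is nonpositive: for $j\in\Ical$ because $\mu_j=-\alpha_jT_j(\Kbb p)_j$ with $\alpha_j>0$ and $T_j$ symmetric positive semidefinite, so $\scalar{\mu_j}{(\Kbb p)_j}=-\alpha_j\scalar{T_j(\Kbb p)_j}{(\Kbb p)_j}\le0$; for $j\in\Acal_s$ and $j\in\Ical_0$ because $(\Kbb p)_j=0$ resp.\ $\mu_j=0$; and for $j\in\Bcal$ and $j\in\Tcal$ by inspecting each disjunctive branch, in which either $(\Kbb p)_j=0$, or $(\Kbb p)_j$ is collinear with $q_j$ together with $\scalar{\mu_j}{q_j}\le0$, or $\scalar{\mu_j}{(\Kbb p)_j}\le0$ is itself one of the branch conditions. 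Hence $\scalar{\phi''(u^*)p}{p}\le0$, and the strong convexity of $\phi$ forces $p=0$; then $\Kbb^\top\mu=\phi''(u^*)p=0$, and substituting $(\Kbb p)_j=0$ back into the cone description — which on the triactive and zero-inactive sets leaves $\vartheta_j\le0$ — combined with $\vartheta\in-N_{\R^n_+}(\alpha^*)$ (which pins $\vartheta_j=0$ where $\alpha^*_j>0$ and gives $\vartheta_j\ge0$ where $\alpha^*_j=0$), yields $\vartheta_j=0$ for every $j$. Thus $(w,z)=(0,0)$ and the CQ holds.

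Having verified the CQ, \Cref{teo:outrata_cq} provides $(\xi,\eta)\in\partial f(\alpha^*,u^*)$, $(\gamma,\delta)\in N^M_\omega(\alpha^*,u^*)$ and a KKT pair $(w^*,z^*)=((\vartheta,\Kbb^\top\mu),p)\in N^M_{\gph Q}(\alpha^*,u^*,\Kbb^\top q^*)$ satisfying its two stationarity equations. Since $f$ is $C^1$ and independent of $\alpha$, $\partial f(\alpha^*,u^*)=\{(0,\nabla J(u^*))\}$; since $\omega$ is a product, $\delta=0$ and $\gamma\in N_{\R^n_+}(\alpha^*)$. Substituting the Jacobians above into the two equations gives $\vartheta+\gamma=0$ and $\phi''(u^*)^\top p-\Kbb^\top\mu-\nabla J(u^*)=0$; rewriting $\gamma\in N_{\R^n_+}(\alpha^*)$ together with $\vartheta+\gamma=0$ in complementarity form (setting $\rho:=-\gamma$) yields the relations $\vartheta+\rho=0$ and $0\le\alpha^*\perp\rho\ge0$. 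Adjoining the primal-dual feasibility relations \cref{eq:fenchel_dual_lower_level} of $(\alpha^*,u^*,q^*)$ and the membership $(\vartheta,\Kbb^\top\mu,p)\in N^M_{\gph Q}(\alpha^*,u^*,\Kbb^\top q^*)$ then reproduces \cref{eq:m_stationary}.

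The main obstacle is the constraint-qualification verification, and inside it the index-set-by-index-set sign analysis of $\scalar{\mu_j}{(\Kbb p)_j}$: the biactive set $\Bcal$ and the triactive set $\Tcal$, with the several disjunctive branches handed down by \Cref{lemma:mordukhovich_cone}, are where the extra nonsmoothness genuinely complicates matters, and one must be careful that, after concluding $p=0$, it is the \emph{combination} of the cone description with $\vartheta\in-N_{\R^n_+}(\alpha^*)$ that forces $\vartheta=0$ on every index set. The passage from $\scalar{\phi''(u^*)p}{p}\le0$ to $p=0$ is precisely where strong convexity of the data fidelity term is essential. Everything after the CQ is routine bookkeeping with convex normal cones and the (constant) Jacobians of $F_1,F_2$.
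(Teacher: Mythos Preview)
Your proposal is correct and follows essentially the same route as the paper: cast the problem in Outrata's GMPEC framework with $F_1=\phi'$, $F_2=\mathrm{id}$, verify the CQ \cref{eq:outrata_cq_conditions} by the index-wise sign analysis of $\scalar{\phi''(u^*)p}{p}=\sum_j\scalar{\mu_j}{(\Kbb p)_j}$ using \Cref{lemma:mordukhovich_cone}, invoke strong convexity for $p=0$, and then read off the stationarity system. One cosmetic slip: with $\vartheta+\gamma=0$ and $\rho:=-\gamma$ you obtain $\vartheta-\rho=0$, not $\vartheta+\rho=0$; the correct identification is $\rho:=\gamma$ (the paper's own sign convention for $\rho$ in the statement is likewise slightly inconsistent, so this is harmless).
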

\begin{proof}
  Referring to \cref{teo:outrata_cq} let us take $F_1(\alpha,u)=\phi'(u)\in\R^m$ and $F_2(\alpha,u) = (\alpha,u)\in\R^n_+\times\R^m$. Existence of the KKT multipliers is guaranteed if the following constraint qualification condition holds for $(\vartheta,\Kbb^\top \mu,p)\in N_{\gph Q}^M(\alpha^*,u^*,\Kbb^\top q^*)$
  \begin{equation}\label{eq:outrata_optimality_system_proof_1}
    \begin{bmatrix}
      I & \bm{0} & \bm{0} \\
      \bm{0} & I & -\phi''(u^*)^\top
    \end{bmatrix} \begin{bmatrix}
      \vartheta\\
      \Kbb^\top \mu\\
      p
    \end{bmatrix}\in -N^M_{\R^n_+}(\alpha^*)\times\{0\} \text{ implies }\vartheta = 0,\;\Kbb^\top\mu = 0,\;p = 0.
  \end{equation}
  Recalling \cref{rem:mordukhovich_convex} and using the expression of the Mordukhovich normal cone $N_{R^n_+}^M(\alpha^*)=N_{R^n_+}(\alpha^*)=\{v\in\R^2:\scalar{v}{\alpha^*} = 0,\;v\le 0\}$, condition \cref{eq:outrata_optimality_system_proof_1} can also be written as
  \begin{align}
    \Kbb^\top \mu - \phi''(u^*)^\top p &= 0,\label{eq:constraint_qualification_1}\\
    \scalar{\alpha^*}{\vartheta} &= 0,\label{eq:constraint_qualification_2}\\
    \vartheta &\ge 0.\label{eq:constraint_qualification_3}
  \end{align}
  Let us take $(\vartheta,\Kbb^\top\mu,p)\in N^M_{\gph Q}(\alpha^*,u^*,\Kbb^\top q^*)$ and let us multiply \cref{eq:constraint_qualification_1} by $p$ on the left. Recalling $(\Kbb p)_j=0$ in $\Acal_s$ and $\mu_j=0$ in $\Ical_0$, we have for each remaining index set
  \begin{align*}
    \scalar{p}{\phi''(u^*)^\top p}&=\scalar{p}{\Kbb^\top\mu} = \sum_{j\in\Ical}\scalar{\mu_j}{(\Kbb p)_j} + \sum_{j\in\Bcal}\scalar{\mu_j}{(\Kbb p)_j} + \sum_{j\in\Bcal_0}\scalar{\mu_j}{(\Kbb p)_j},\\
    &= \sum_{j\in\Ical}-\alpha_j\scalar{(\Kbb p)_j}{T_j(\Kbb p)_j} + \sum_{j\in\Bcal}c\underbrace{\scalar{\mu_j}{q_j}}_{\le 0} + \sum_{j\in\Bcal_0}\underbrace{\scalar{\mu_j}{(\Kbb p)_j}}_{\le 0} \le 0,
  \end{align*}
  where we used the characterization of the Mordukhovich normal cone. Furthermore, using the strong convexity of the function $\phi$ we have $\scalar{p}{\phi''(u^*)^\top p}\ge 0$. Both inequalities imply $p=0$ and, according to \cref{eq:constraint_qualification_1}, it also yields $\Kbb^\top\mu = 0$.
  Moreover, if we consider the index set $\Ical\cup\Acal_s\cup\Bcal$, we know in all these sets $\alpha^*_j>0$, and therefore, to satisfy equation \cref{eq:constraint_qualification_2} it must hold $\vartheta_j=0$. Since $p=0$ in $\Tcal$ we know $\vartheta_j = 0$ or $\vartheta_j \le \|(\Kbb p)_j\|$ for this index set, in both cases it leads to $\vartheta_j=0$. In $\Ical_0$, we have $\vartheta_j \le - \frac{\scalar{(\Kbb u)_j}{(\Kbb p)_j}}{\|(\Kbb u)_j\|} = 0$ and \cref{eq:constraint_qualification_3} yields $\vartheta_j=0$. Therefore, $\vartheta_j = 0$ for all $j$. Consequently, the existence of multipliers is guaranteed and there exists a vector $\rho\in N^M_{R^n_+}(\alpha^*)$ and KKT multipliers $(\vartheta,\Kbb^\top\mu,p)\in N_{\gph Q}^M(\alpha^*,u^*,\Kbb^\top q^*)$ such that
  \begin{align}
    0 &= \nabla_u J(u^*) + (\nabla_u F_2(\alpha,u))^\top \begin{bmatrix}
      \vartheta\\
      \Kbb^\top\mu
    \end{bmatrix} - (\nabla_u F_1(\alpha,u))^\top p,\\
    0 &= (\nabla_\alpha F_2(\alpha,u))^\top \begin{bmatrix}
      \vartheta\\
      \Kbb^\top\mu
    \end{bmatrix}- (\nabla_\alpha F_1(\alpha,u))^\top p + \rho.
  \end{align}
\end{proof}


\section{Bouligand Stationarity}\label{sec:bouligand_stationarity}
In this section we will study the Bouligand stationarity condition for \cref{eq:bilevel_problem}. With this goal in mind, let us introduce the solution operator for the lower-level problem $S:\R^n_+\ni\alpha\to u\in\R^m$ that maps each parameter $\alpha\in\R^n_+$ to the corresponding reconstruction $u\in\R^n$. If this mapping is bijective, we can make use of it to formulate \cref{eq:bilevel_problem} as a reduced optimization problem
\begin{mini}
  {\alpha\in\R^n_+}{j(\alpha) := J(S(\alpha)).}{\label{eq:reduced_cost_function}}{}
\end{mini}
Furthermore, if the solution operator is Bouligand (B)-differentiable, we can make use of the chain rule for B-differentiable functions to conclude that the composite mapping $J$, as a function of $\alpha$, is B-differentiable as well. In this case, its directional derivative in a direction $h$ is given by
\begin{equation}\label{eq:dir_derivative_reduced_cost}
  j'(\alpha;h) = \scalar{\nabla J(u)}{S'(\alpha;h)},
\end{equation}
where $S'(\alpha;h)$ is the directional derivative of the solution operator in direction $h$.
Moreover, if $\alpha^*$ is a local optimal solution and $u^*=S(\alpha^*)$ its corresponding reconstruction, then it satisfies the following necessary condition
\begin{equation}\label{eq:b_stationary_condition}
  j'(\alpha^*;\alpha-\alpha^*) = \scalar{\nabla J(u^*)}{S'(\alpha^*;\alpha-\alpha^*)}\ge 0,\;\forall \alpha\in\R^n_+.
\end{equation}

A point $\alpha^*$ satisfying the necessary condition \cref{eq:b_stationary_condition} is called \textit{Bouligand (B)-stationary}. This type of stationarity condition is based on the tangent cone to our feasible parameter set and can be interpreted as the counterpart of the implicit programming approach in the discussion of finite-dimensional MPECs, see \cite[Lemma 4.2.5]{luo1996mathematical}.

To obtain such B-stationarity condition for our problem, a sensitivity analysis of the solution mapping must be carried out, to prove that it is indeed Bouligand differentiable, i.e., locally Lipschitz continuous and directionally differentiable. Let us recall that in \Cref{sec:problem_formulation} we already argued the existence and uniqueness of the solution to the lower-level problem \cref{eq:bilevel_problem_lower}, implying that $S:\R^n_+\to \R^m$ is singled valued.

%
%

\begin{theorem}\label{teo:solution_operator_lipschitz}
  The solution operator for the lower-level problem \cref{eq:bilevel_problem_lower} $S:\R^n_+\ni\alpha\to u\in\R^m$ is Lipschitz continuous.
\end{theorem}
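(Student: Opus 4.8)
The plan is to exploit the strong convexity of the lower-level energy uniformly in $\alpha$ together with the variational characterization \cref{eq:nec_suff_condition_vi}. Fix two parameters $\alpha^1,\alpha^2\in\R^n_+$ and denote by $u^1=S(\alpha^1)$, $u^2=S(\alpha^2)$ the corresponding unique minimizers, which exist by \Cref{teo:unique_solution_vi}. Writing the variational inequality \cref{eq:nec_suff_condition_vi} for $u^1$ tested with $v=u^2$, and for $u^2$ tested with $v=u^1$, I would add the two inequalities. The fidelity terms combine to give $\scalar{\phi'(u^1)-\phi'(u^2)}{u^1-u^2}$, which by the strong convexity of $\phi$ (with modulus $\gamma>0$, say) is bounded below by $\gamma\|u^1-u^2\|^2$. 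The remaining terms are
\[
  \sum_{j=1}^n \bigl(\alpha^1_j - \alpha^2_j\bigr)\bigl(\|(\Kbb u^2)_j\| - \|(\Kbb u^1)_j\|\bigr).
\]
So the first key step yields the estimate
\[
  \gamma\|u^1-u^2\|^2 \;\le\; \sum_{j=1}^n \bigl(\alpha^1_j - \alpha^2_j\bigr)\bigl(\|(\Kbb u^2)_j\| - \|(\Kbb u^1)_j\|\bigr).
\]

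Next I would bound the right-hand side. By the reverse triangle inequality $\bigl|\|(\Kbb u^2)_j\| - \|(\Kbb u^1)_j\|\bigr| \le \|(\Kbb u^2)_j - (\Kbb u^1)_j\| = \|(\Kbb(u^2-u^1))_j\|$, and then Cauchy–Schwarz in $j$ gives
\[
  \sum_{j=1}^n \bigl(\alpha^1_j - \alpha^2_j\bigr)\bigl(\|(\Kbb u^2)_j\| - \|(\Kbb u^1)_j\|\bigr) \;\le\; \|\alpha^1-\alpha^2\|\, \|\Kbb(u^1-u^2)\|_{2,1}^{\text{(as a vector of norms)}} \;\le\; \|\alpha^1-\alpha^2\|\,\sqrt{n}\,\|\Kbb\|\,\|u^1-u^2\|.
\]
Combining with the lower bound, dividing by $\|u^1-u^2\|$ (the case $u^1=u^2$ being trivial), gives $\|u^1-u^2\| \le (\sqrt{n}\,\|\Kbb\|/\gamma)\,\|\alpha^1-\alpha^2\|$, i.e. $S$ is globally Lipschitz with constant $\sqrt{n}\,\|\Kbb\|/\gamma$.

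I do not expect a serious obstacle here: the argument is the standard one-parameter monotonicity estimate for variational inequalities of the second kind, and the only thing to be careful about is that the nonsmooth regularizer contributes with a favorable (or at worst controllable) sign after symmetrizing the two inequalities — which it does, since the cross terms reduce to a product that is linear in $\alpha^1-\alpha^2$ and Lipschitz in the gradients of $u^1-u^2$. A minor point worth stating explicitly is that strong convexity of $\phi$ is assumed uniformly (the modulus $\gamma$ does not depend on $\alpha$), so the Lipschitz constant obtained is genuinely global on $\R^n_+$; one should also note $\phi'$ here denotes the partial gradient in $u$, with $f$ fixed.
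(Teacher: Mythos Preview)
Your proposal is correct and follows essentially the same route as the paper: test the variational inequality \cref{eq:nec_suff_condition_vi} for each solution with the other one, add, apply strong convexity of $\phi$ on the left and the reverse triangle inequality on the right. The only cosmetic difference is in how the right-hand side is bounded (you use Cauchy--Schwarz in $j$ and pick up a $\sqrt{n}$, while the paper uses $|\alpha^1_j-\alpha^2_j|\le\|\alpha^1-\alpha^2\|$ directly and absorbs the resulting $\|\cdot\|_{2,1}$ norm into the operator norm of $\Kbb$), which affects only the Lipschitz constant, not the argument.
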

\begin{proof}
  Thanks to \Cref{teo:unique_solution_vi}, we know the lower-level problem has a unique solution. Moreover, $\alpha_1,\alpha_2\in \R^n_+$ and its corresponding solutions $u_1,u_2$ satisfy
  \begin{align*}
    \scalar{\phi'(u_1)}{v-u_1}+\sum_{j=1}^n(\alpha_1)_j\|(\Kbb v)_j\|-\sum_{j=1}^n(\alpha_1)_j\|(\Kbb u_1)_j\|&\ge 0,\;\forall v\in\R^m\\
    \scalar{\phi'(u_2)}{w-u_2}+\sum_{j=1}^n(\alpha_2)_j\|(\Kbb w)_j\|-\sum_{j=1}^n(\alpha_2)_j\|(\Kbb u_2)_j\|&\ge 0,\;\forall w\in\R^m.
  \end{align*}
  Taking in particular $v=u_2$ and $w=u_1$ and adding the inequalities, it yields
  \begin{equation*}
    \scalar{\phi'(u_2)-\phi'(u_1)}{u_2-u_1} \le \sum_{j=1}^n((\alpha_1)_j-(\alpha_2)_j)(\|(\Kbb u_2)_j\|-\|(\Kbb u_1)_j\|),
  \end{equation*}
  Moreover, given that $\phi$ is strongly convex and using the Cauchy-Schwarz inequality, it yields
  \begin{align*}
    c\|u_2-u_1\|^2 &\le \sum_{j=1}^n(\alpha_{2,j}-\alpha_{1,j})\|(\Kbb (u_2-u_1))_j\| \le \|\alpha_2-\alpha_1\| \sum_{j=1}^n\|(\Kbb (u_2-u_1))_j\|,\\
    &\le \|\alpha_2-\alpha_1\|\|\Kbb\|\|u_2-u_1\|,
  \end{align*}
  where $\|\Kbb\|$ is the operator norm of the linear operator $\Kbb$.
\end{proof}

\subsection{Directional Differentiability}
Now, we are interested in the differentiability properties of the solution operator for the lower-level problem \cref{eq:bilevel_problem_lower}. This will require a sensitivity analysis of the solution operator with respect to the regularization parameter.
By taking a perturbed regularization parameter $\alpha^t$ in the primal-dual formulation for the lower-level problem \cref{eq:fenchel_dual_lower_level} such that $\alpha_j^t=\alpha_j+th_j\ge 0$ we get the following perturbed lower-level problem
\begin{subequations}\label{eq:perturbed_lower_level}
  \begin{align}
    \phi'(u^t) + \Kbb^\top q^t &= 0,\\
    \scalar{q^t_j}{(\Kbb u^t)_j} - (\alpha_j+t h_j)\|(\Kbb u^t)_j\|&=0,\;&\forall j=1,\dots,n,\\
    \|q^t_j\|-(\alpha_j+t h_j)&\le 0,\;&\forall j=1,\dots,n.
  \end{align}
\end{subequations}
Thanks to the local Lipschitz continuity of the solution operator proved in \cref{teo:solution_operator_lipschitz} and the boundedness of $q^t$, see \cref{eq:perturbed_lower_level}, there exist a subsequence, denoted the same, so that $q^t\to\tilde{q}\in\R^{n\times 2}$, to some $\tilde{q}$. Additionally, we can guarantee the existence of a subsequence of $u^t$, denoted w.l.o.g. with the same symbol, satisfying the following limit
\begin{equation}\label{eq:eta_definition}
  \lim_{t\to 0} \frac{u^t-u}{t} \to \eta\in\R^m.
\end{equation}

\begin{theorem}
  The limit described in \cref{eq:eta_definition} satisfies $\eta \in \Ccal = \Ccal(\alpha,u)$ where
  \begin{equation}\label{eq:ccal}
    \Ccal(\alpha,u):= \left\{v\in\R^n:\begin{cases}
      (\Kbb v)_j = 0,&\forall j\in\Acal_s,\\
      \scalar{q_j}{(\Kbb v)_j}=\alpha_j\|(\Kbb v)_j\|,&\forall j\in\Bcal.
    \end{cases}
    \right\}
  \end{equation}
\end{theorem}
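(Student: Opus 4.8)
The plan is to pass to the limit $t\downarrow 0$ in the perturbed primal--dual system \eqref{eq:perturbed_lower_level}, exploiting the three convergences available along the chosen subsequence: $u^t\to u$ (from \Cref{teo:solution_operator_lipschitz}), $q^t\to\tilde q=:q$, and $(u^t-u)/t\to\eta$. First I would note that letting $t\downarrow 0$ in \eqref{eq:perturbed_lower_level}, using continuity of $\phi'$ and of the norms, shows that $q$ solves the unperturbed primal--dual system \eqref{eq:fenchel_dual_lower_level}; hence the active sets $\Acal_s(\alpha,u)$ and $\Bcal(\alpha,u)$ are consistently defined with this limit $q$. All limits below are taken along that subsequence with $t>0$.

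For $j\in\Acal_s$ we have the strict inequality $\|q_j\|<\alpha_j$. Since $q^t_j\to q_j$ and $\alpha_j+th_j\to\alpha_j$, there is $t_0>0$ with $\|q^t_j\|<\alpha_j+th_j$ for all $0<t<t_0$. Combining the complementarity identity $\scalar{q^t_j}{(\Kbb u^t)_j}=(\alpha_j+th_j)\|(\Kbb u^t)_j\|$ with Cauchy--Schwarz, $\scalar{q^t_j}{(\Kbb u^t)_j}\le\|q^t_j\|\,\|(\Kbb u^t)_j\|$, gives $(\alpha_j+th_j-\|q^t_j\|)\,\|(\Kbb u^t)_j\|\le 0$, so $(\Kbb u^t)_j=0$ for $0<t<t_0$. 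Dividing by $t$ and letting $t\downarrow 0$ yields $(\Kbb\eta)_j=\lim_{t\downarrow 0}(\Kbb(u^t-u))_j/t=0$, the first defining condition of $\Ccal$.

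For $j\in\Bcal$ we have $(\Kbb u)_j=0$, hence $(\Kbb u^t)_j=(\Kbb(u^t-u))_j$ and $(\Kbb u^t)_j/t\to(\Kbb\eta)_j$. Dividing the complementarity identity by $t>0$ and using positive homogeneity of the Euclidean norm, $\|(\Kbb u^t)_j\|/t=\|(\Kbb u^t)_j/t\|$, we obtain
\[
  \bigscalar{q^t_j}{\frac{(\Kbb u^t)_j}{t}} = (\alpha_j+th_j)\,\Bigl\|\frac{(\Kbb u^t)_j}{t}\Bigr\|.
\]
Passing to the limit $t\downarrow 0$, using $q^t_j\to q_j$, $(\Kbb u^t)_j/t\to(\Kbb\eta)_j$ and continuity of the norm, gives $\scalar{q_j}{(\Kbb\eta)_j}=\alpha_j\|(\Kbb\eta)_j\|$, the second defining condition of $\Ccal$. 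Therefore $\eta\in\Ccal(\alpha,u)$.

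The step that requires the most care is the one for $j\in\Acal_s$: one must argue that $(\Kbb u^t)_j$ vanishes \emph{exactly} for small $t$ (not merely to first order in $t$), since that is what turns the limit into the equality $(\Kbb\eta)_j=0$ rather than an inequality; the Cauchy--Schwarz/complementarity combination above is the key. A secondary subtlety is the identification of the limit dual variable $\tilde q$ with the dual variable $q$ defining the active sets, which is legitimate because $\tilde q$ inherits the unperturbed optimality relations in the limit and because on $\Acal_s$ and $\Bcal$ only the relations satisfied by $\tilde q$ are invoked.
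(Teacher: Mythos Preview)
Your argument is correct and, for $j\in\Bcal$, coincides with the paper's: both divide the perturbed complementarity identity by $t$ (using $(\Kbb u)_j=0$) and pass to the limit. For $j\in\Acal_s$ there is a minor but noteworthy difference in the order of operations. The paper first takes the limit in the difference quotient of the complementarity relations to obtain $\scalar{\tilde q_j}{(\Kbb\eta)_j}=\alpha_j\|(\Kbb\eta)_j\|$ and \emph{then} invokes Cauchy--Schwarz together with $\|\tilde q_j\|<\alpha_j$ to force $(\Kbb\eta)_j=0$. You instead apply Cauchy--Schwarz at finite $t$ to deduce the stronger intermediate fact that $(\Kbb u^t)_j=0$ exactly for all sufficiently small $t$, and only afterwards divide by $t$. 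Your route is slightly more elementary (no appeal to the Bouligand differentiability of the norm at the origin is needed) and yields the extra geometric information that strongly active indices remain strongly active under small perturbations. You also make explicit the issue of identifying the limit $\tilde q$ with the dual variable defining the active sets, a point the paper handles only implicitly here and clarifies afterwards in \Cref{rem:Cu_independent_from_q}.
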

\begin{proof}
  By adding the complementarity relationships in  \cref{eq:perturbed_lower_level,eq:fenchel_dual_lower_level}, and dividing by $t$, we get
  \begin{equation*}
    \bigscalar{\frac{q^t_j-q_j}{t}}{(\Kbb u)_j} + \bigscalar{q_j^t}{\frac{(\Kbb u^t)_j-(\Kbb u)_j}{t}} -\alpha_j\left(\frac{\|(\Kbb u^t)_j\|-\|(\Kbb u)_j\|}{t}\right) - h_j\|(\Kbb u^t)_j\| = 0.
  \end{equation*}
  For $j\in\Acal_s\cup \Bcal$, taking the limit as $t\to 0$ and using the boundedness of the sequence $q^t$ along with the Bouligand differentiability of the Euclidean norm, it yields
  \begin{equation*}\label{eq:teo3_3_aux1}
    \scalar{\tilde{q}_j}{(\Kbb\eta)_j} - \alpha_j\|(\Kbb \eta)_j\| - h_j\|(\Kbb u)_j\| = 0.
  \end{equation*}
  Since $(\Kbb u)_j=0$ for $j\in\Acal_s\cup\Bcal$, we get that
  \begin{equation*}
    \scalar{\tilde{q}_j}{(\Kbb\eta)_j} - \alpha_j\|(\Kbb \eta)_j\|=0.
  \end{equation*}
  Moreover, for $j\in\Acal_s$, recalling $\alpha_j>0$ in this index set, we get
  \begin{equation*}
    \alpha_j\|(\Kbb \eta)_j\| = \scalar{\tilde{q}_j}{(\Kbb\eta)_j} \le \|\tilde{q}_j\|\|(\Kbb \eta)_j\| < \alpha_j\|(\Kbb\eta)_j\|,
  \end{equation*}
  which only holds if $(\Kbb \eta)_j= 0$ in this index set, finishing the proof.
\end{proof}

\begin{remark}\label{rem:Cu_independent_from_q}
  If $q^1$ and $q^2$ are two different slack variables associated with the solution $u$ in \cref{eq:fenchel_dual_lower_level}, then the two sets
  \begin{equation*}
    \Ccal_i := \left\{v\in\R^n:\begin{cases}
      (\Kbb v)_j=0,&\text{ if }\|q_j^i\| < \alpha_j,\\
      \scalar{q_j^i}{(\Kbb v)_j}=\alpha_j\|(\Kbb v)_j\|,&\text{ if }\;(\Kbb u)_j=0,\;\alpha_j>0,\;\|q_j^i\|=\alpha_j.
    \end{cases}\right\},
  \end{equation*}
  coincide, since $\Kbb^\top q^1 = -\phi'(u) = \Kbb^\top q^2$. As a consequence, the set $\Ccal(\alpha,u)$ does not depend on the slack variable, only on the solution $u$ and the parameter $\alpha$.
\end{remark}

\begin{lemma}
  The cone $\Ccal(\alpha,u)$ can alternatively be written as
  \begin{equation}\label{eq:ccal_cone}
    \Ccal(\alpha,u) = \left\{v\in\R^n:\scalar{\Kbb^\top q}{v}\ge\sum_{j\in\Ical}\bigscalar{\alpha_j\frac{(\Kbb u)_j}{\|(\Kbb u)_j\|}}{(\Kbb v)_j} + \sum_{j\in\Bcal}\alpha_j\|(\Kbb v)_j\|\right\}
  \end{equation}
\end{lemma}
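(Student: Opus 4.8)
The strategy is to recognize both descriptions of $\Ccal(\alpha,u)$ as two ways of writing the \emph{critical cone} of the convex lower-level problem at $u=S(\alpha)$, i.e.\ the set of directions $v$ along which the lower-level energy $E(w):=\phi(w)+\sum_{j=1}^{n}\alpha_j\|(\Kbb w)_j\|$ does not increase to first order, $\{v\in\R^{n}:E'(u;v)\le 0\}$. Since $E$ is convex and $u$ is its unique minimizer (\cref{teo:unique_solution_vi}), one has $E'(u;v)\ge 0$ for every $v$, so this set is in fact $\{v:E'(u;v)=0\}$.

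\textbf{Step 1 (the inequality form).} First I would compute $E'(u;v)$ explicitly. The directional derivative of $w\mapsto\|(\Kbb w)_j\|$ at $u$ in direction $v$ equals $\scalar{(\Kbb u)_j}{(\Kbb v)_j}/\|(\Kbb u)_j\|$ when $(\Kbb u)_j\neq 0$ and $\|(\Kbb v)_j\|$ when $(\Kbb u)_j=0$. Using the partition $\{1,\dots,n\}=\Ical\cup\Ical_0\cup\Acal_s\cup\Bcal\cup\Tcal$ together with the facts that $(\Kbb u)_j\neq 0$ holds exactly on $\Ical\cup\Ical_0$ and that $\alpha_j=0$ on $\Ical_0\cup\Tcal$, summing these contributions yields
\[
  E'(u;v)=\scalar{\phi'(u)}{v}+\sum_{j\in\Ical}\bigscalar{\alpha_j\frac{(\Kbb u)_j}{\|(\Kbb u)_j\|}}{(\Kbb v)_j}+\sum_{j\in\Acal_s\cup\Bcal}\alpha_j\|(\Kbb v)_j\|.
\]
Substituting $\phi'(u)=-\Kbb^{\top}q$ from the primal--dual system \cref{eq:fenchel_dual_lower_level} and moving $\scalar{\Kbb^{\top}q}{v}$ to the right-hand side turns $E'(u;v)\le 0$ into the inequality displayed in \cref{eq:ccal_cone}; as Step 2 makes clear, on the resulting cone the $\Acal_s$ directions satisfy $(\Kbb v)_j=0$, so the $\Acal_s$ terms of the norm sum may be omitted, matching the $\Bcal$-sum written there.

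\textbf{Step 2 (equivalence with the ``cases'' form).} Next I would expand $\scalar{\Kbb^{\top}q}{v}=\sum_{j=1}^{n}\scalar{q_j}{(\Kbb v)_j}$ using the primal--dual relations of \cref{eq:fenchel_dual_lower_level}: $q_j=\alpha_j(\Kbb u)_j/\|(\Kbb u)_j\|$ on $\Ical$, $q_j=0$ on $\Ical_0\cup\Tcal$ (forced by $\|q_j\|\le\alpha_j=0$), $\|q_j\|<\alpha_j$ on $\Acal_s$, and $\|q_j\|=\alpha_j$ on $\Bcal$. Inserting this into the expression for $E'(u;v)$ makes the $\Ical$, $\Ical_0$ and $\Tcal$ terms cancel and leaves
\[
  E'(u;v)=\sum_{j\in\Acal_s}\bigl(\alpha_j\|(\Kbb v)_j\|-\scalar{q_j}{(\Kbb v)_j}\bigr)+\sum_{j\in\Bcal}\bigl(\alpha_j\|(\Kbb v)_j\|-\scalar{q_j}{(\Kbb v)_j}\bigr),
\]
every summand of which is $\ge 0$ by Cauchy--Schwarz and $\|q_j\|\le\alpha_j$. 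Hence $E'(u;v)=0$ if and only if each summand vanishes: for $j\in\Bcal$ this is exactly $\scalar{q_j}{(\Kbb v)_j}=\alpha_j\|(\Kbb v)_j\|$, and for $j\in\Acal_s$, because there $\|q_j\|<\alpha_j$ \emph{strictly}, the relation $\alpha_j\|(\Kbb v)_j\|=\scalar{q_j}{(\Kbb v)_j}\le\|q_j\|\,\|(\Kbb v)_j\|$ can hold only if $(\Kbb v)_j=0$. These are precisely the defining conditions of $\Ccal(\alpha,u)$ in \cref{eq:ccal}, which completes the identification.

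I expect the main obstacle to be purely the index-set bookkeeping: checking that $(\Kbb u)_j\neq 0$ occurs exactly on $\Ical\cup\Ical_0$, that the complementarity relation of \cref{eq:fenchel_dual_lower_level} together with $\|q_j\|<\alpha_j$ forces $(\Kbb u)_j=0$ on $\Acal_s$, and that $q_j=0$ on $\Ical_0\cup\Tcal$ — and then the strictness step on $\Acal_s$, which is what upgrades the Cauchy--Schwarz equality to $(\Kbb v)_j=0$. No constraint qualification, no duality theory beyond the already-available \cref{eq:fenchel_dual_lower_level}, and no regularity of $\phi$ past $C^{2}$/strong convexity is needed; only the variational inequality \cref{eq:nec_suff_condition_vi} characterizing $u$ is used.
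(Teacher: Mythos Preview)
Your critical-cone viewpoint is the right framing, and Step~2 is clean: writing $E'(u;v)=\sum_{j\in\Acal_s\cup\Bcal}\bigl(\alpha_j\|(\Kbb v)_j\|-\scalar{q_j}{(\Kbb v)_j}\bigr)$ as a sum of nonnegative terms yields at once that $\{v:E'(u;v)=0\}$ coincides with the ``cases'' definition \cref{eq:ccal}. The paper reaches the same identity through a sandwich inequality, so conceptually your route and the paper's are the same, only yours is more transparently organized.

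The gap is in Step~1. The inequality you obtain from $E'(u;v)\le 0$ carries $\sum_{j\in\Acal_s\cup\Bcal}\alpha_j\|(\Kbb v)_j\|$ on the right, not the $\sum_{j\in\Bcal}$ displayed in \cref{eq:ccal_cone}. Dropping the nonnegative $\Acal_s$-terms \emph{weakens} the right-hand side, so you have only shown $\Ccal=\{v:E'(u;v)\le 0\}\subset\mathcal{M}$, where $\mathcal{M}$ is the set in \cref{eq:ccal_cone}; your remark that ``on the resulting cone the $\Acal_s$ directions satisfy $(\Kbb v)_j=0$'' does not give the reverse inclusion, since it already presupposes $v\in\Ccal$. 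The paper's own proof shares this defect: the step ``multiplying \cref{eq:ccal_equivalence_2} by $-1$ and adding it to \cref{eq:ccal_equivalence_3}'' yields only the tautology $\sum_{j\in\Acal_s}\alpha_j\|(\Kbb v)_j\|\ge 0$, not the claimed equality. Indeed, after the cancellations, $v\in\mathcal{M}$ reads $\sum_{j\in\Acal_s}\scalar{q_j}{(\Kbb v)_j}\ge\sum_{j\in\Bcal}\bigl(\alpha_j\|(\Kbb v)_j\|-\scalar{q_j}{(\Kbb v)_j}\bigr)$, and nothing forces the left sum to vanish (e.g.\ take $\Bcal=\emptyset$, $j\in\Acal_s$ with $q_j\neq 0$, and any $v$ with $\scalar{q_j}{(\Kbb v)_j}>0$). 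The lemma becomes correct --- and your argument complete --- if the norm sum in \cref{eq:ccal_cone} runs over $\Acal_s\cup\Bcal$; the only later use of the lemma, in \Cref{lemma:bound_1}, invokes just the inclusion $\Ccal\subset\mathcal{M}$, which holds either way.
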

\begin{proof}
  Let us denote the set in \cref{eq:ccal_cone} as $\mathcal{M}$. Taking $v\in \Ccal$, as in \cref{eq:ccal}, and using its definition, we obtain
  \begin{align*}
    \scalar{\Kbb^\top q}{v} &= \sum_{j\in\Ical}\scalar{q_j}{(\Kbb v)_j} + \sum_{j\in\Acal_s}\scalar{q_j}{(\Kbb v)_j} + \sum_{j\in\Bcal}\scalar{q_j}{(\Kbb v)_j},\\
    &= \sum_{j\in\Ical}\bigscalar{\alpha_j\frac{(\Kbb u)_j}{\|(\Kbb u)_j\|}}{(\Kbb v)_j} + \sum_{j\in\Acal_s} \scalar{q_j}{\underbrace{(\Kbb v)_j}_{=0}} + \sum_{j\in\Bcal}\alpha_j\|(\Kbb v)_j\|,
  \end{align*}
  and, consequently, $\Ccal\subset\mathcal{M}$.

  To prove the reverse inclusion, let us take $v\in\mathcal{M}$. For $j\in\Acal_s\cup\Bcal$ the following relation holds true
  \begin{equation}\label{eq:ccal_equivalence_1}
    \sum_{j\in\Bcal}\alpha_j\|(\Kbb v)_j\| \le \sum_{j\in\Acal_s\cup\Bcal}\scalar{q_j}{(\Kbb v)_j}\le \sum_{j\in\Acal_s\cup\Bcal}\alpha_j\|(\Kbb v)_j\|.
  \end{equation}
  where we used the Cauchy-Schwarz inequality and $\|q_j\|\le \alpha_j$. Regarding the left inequality in \cref{eq:ccal_equivalence_1} we know
  \begin{equation}\label{eq:ccal_equivalence_2}
    \sum_{j\in\Acal_s\cup\Bcal}\scalar{q_j}{(\Kbb v)_j} - \sum_{j\in\Bcal}\alpha_j\|(\Kbb v)_j\|\ge 0,
  \end{equation}
  using the Cauchy-Schwarz inequality we also know
  \begin{equation}\label{eq:ccal_equivalence_3}
    \sum_{j\in\Acal_s\cup\Bcal}\scalar{q_j}{(\Kbb v)_j} - \sum_{j\in\Acal_s\cup\Bcal}\alpha_j\|(\Kbb v)_j\|\le 0.
  \end{equation}
  Multiplying \cref{eq:ccal_equivalence_2} by $-1$ and adding it to \cref{eq:ccal_equivalence_3} we get $\sum_{j\in\Acal_s}\alpha_j\|(\Kbb v)_j\| = 0,$ which implies $(\Kbb v)_j=0$, for all $j\in\Acal_s$. Now, using this result in \cref{eq:ccal_equivalence_1} we get
  \begin{equation*}
    \sum_{j\in\Bcal}(\alpha_j\|(\Kbb v)_j\| - \scalar{q_j}{(\Kbb v)_j})=0
  \end{equation*}
  and, consequently, it holds $\scalar{q_j}{(\Kbb v)_j}=\alpha_j\|(\Kbb v)_j\|$.
\end{proof}

Now, to prove the directional differentiability of the solution operator for the lower-level problem \cref{eq:bilevel_problem_lower} we will first demonstrate the following lemmata.

\begin{lemma}\label{lemma:bound_1}
  Let $\R^n_+\ni\alpha\ge 0$ and $\R^n_+\ni\alpha+th\ge 0$. Then for every $v\in\Ccal$ it holds
  \begin{multline}
    \bigscalar{\Kbb^\top\left(\frac{q^t-q}{t}\right)}{v}\le \sum_{j\in\Ical}\frac{\alpha_j}{t}\bigscalar{\frac{(\Kbb u^t)_j}{\|(\Kbb u^t)_j\|}-\frac{(\Kbb u)_j}{\|(\Kbb u)_j\|}}{(\Kbb v)_j} \\+ \sum_{j\in\Ical} h_j\bigscalar{\frac{(\Kbb u^t)}{\|(\Kbb u^t)_j\|}}{(\Kbb v)_j}+\sum_{j\in\Bcal}h_j\|(\Kbb v)_j\| + \sum_{j\in\Tcal\cup\Ical_0}h_j\|(\Kbb v)_j\|.
  \end{multline}
\end{lemma}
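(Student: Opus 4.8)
The plan is to expand the left-hand side blockwise and estimate each index group separately. Writing $\scalar{\Kbb^\top(q^t-q)}{v}=\sum_{j=1}^n\scalar{(q^t)_j-q_j}{(\Kbb v)_j}$, I would split the sum over the partition $\{1,\dots,n\}=\Ical\cup\Acal_s\cup\Bcal\cup\Ical_0\cup\Tcal$ associated with the unperturbed triple $(\alpha,u,q)$; by \Cref{rem:Cu_independent_from_q} the set $\Ccal$, and hence the statement, does not depend on the chosen slack. Throughout I would fix $t>0$ small enough that $(\Kbb u^t)_j\neq 0$ whenever $j\in\Ical$, which is legitimate since $(\Kbb u)_j\neq 0$ on $\Ical$ and $u^t\to u$ by the Lipschitz continuity of $S$ from \Cref{teo:solution_operator_lipschitz}.

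For $j\in\Ical$ the complementarity relations in \cref{eq:fenchel_dual_lower_level} and \cref{eq:perturbed_lower_level}, combined with $\|q_j\|\le\alpha_j$, $\|q_j^t\|\le\alpha_j+th_j$ and the equality case of Cauchy--Schwarz, force $q_j=\alpha_j(\Kbb u)_j/\|(\Kbb u)_j\|$ and $q_j^t=(\alpha_j+th_j)(\Kbb u^t)_j/\|(\Kbb u^t)_j\|$. Substituting, splitting off the $th_j$ term, and dividing by $t$ reproduces exactly the first two sums in the claim, with equality. For $j\in\Acal_s$ the membership $v\in\Ccal$ gives $(\Kbb v)_j=0$, so this block contributes nothing. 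For $j\in\Bcal$, $v\in\Ccal$ gives $\scalar{q_j}{(\Kbb v)_j}=\alpha_j\|(\Kbb v)_j\|$, while Cauchy--Schwarz and $\|q_j^t\|\le\alpha_j+th_j$ give $\scalar{(q^t)_j}{(\Kbb v)_j}\le(\alpha_j+th_j)\|(\Kbb v)_j\|$; subtracting and dividing by $t$ leaves $\le h_j\|(\Kbb v)_j\|$. Finally, for $j\in\Ical_0\cup\Tcal$ one has $\alpha_j=0$, hence $q_j=0$, so $\scalar{(q^t)_j-q_j}{(\Kbb v)_j}=\scalar{(q^t)_j}{(\Kbb v)_j}\le\|q_j^t\|\,\|(\Kbb v)_j\|\le th_j\|(\Kbb v)_j\|$, which after division by $t$ is $\le h_j\|(\Kbb v)_j\|$. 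Summing the five blocks yields the asserted inequality.

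I expect no serious obstacle here; the result is essentially a bookkeeping estimate. The only point demanding care is the $\Ical$ block --- justifying that for small $t$ the perturbed index stays inactive so that $q_j^t$ has its closed form, and that the $t^{-1}$-scaled difference of normalized gradients is well defined --- after which everything reduces to Cauchy--Schwarz together with the defining (in)equalities of $\Ccal$ and of the primal-dual systems \cref{eq:fenchel_dual_lower_level}, \cref{eq:perturbed_lower_level}. It is also worth recording that the estimate is in fact an equality on $\Ical$ and $\Acal_s$, the only genuine inequalities coming from the biactive, zero-inactive and triactive blocks, since this is precisely the structure that will later permit passing to the limit $t\to 0$ in the sensitivity analysis.
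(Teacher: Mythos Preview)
Your argument is correct and follows essentially the same route as the paper: a block-by-block splitting over $\Ical,\Acal_s,\Bcal,\Ical_0\cup\Tcal$, using the explicit form of $q_j,q_j^t$ on $\Ical$ (valid for small $t$ since $\Ical\subset\Ical(\alpha+th,u^t)$), the vanishing $(\Kbb v)_j=0$ on $\Acal_s$, and Cauchy--Schwarz combined with $\|q_j^t\|\le\alpha_j+th_j$ on the remaining blocks. The only cosmetic difference is that the paper bounds $\scalar{\Kbb^\top q^t}{v}$ and $\scalar{\Kbb^\top q}{v}$ separately (invoking the inequality characterization \cref{eq:ccal_cone} of $\Ccal$ for the latter) before subtracting, whereas you treat the difference directly via the defining equalities of $\Ccal$; the content is the same.
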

\begin{proof}
  Given that $v\in\Ccal$, let us first bound the following product
  \begin{align*}
    \scalar{\Kbb^\top q^t}{v} &= \sum_{j\in\Ical}\scalar{q_j^t}{(\Kbb v)_j} + \sum_{j\in\Acal_s}\scalar{q_j^t}{\underbrace{(\Kbb v)_j}_{=0}} + \sum_{j\in\Bcal}\scalar{q_j^t}{(\Kbb v)_j} + \sum_{j\in\Tcal\cup\Ical_0}\scalar{q_j^t}{(\Kbb v)_j},\\
    &\le\sum_{j\in\Ical}\bigscalar{(\alpha_j+t h_j)\frac{(\Kbb u^t)_j}{\|(\Kbb u^t)_j\|}}{(\Kbb v)_j} + \sum_{j\in\Bcal}(\alpha_j+t h_j)\|(\Kbb v)_j\| + \sum_{j\in\Tcal\cup\Ical_0}t h_j\|(\Kbb v)_j\|,
  \end{align*}
  for $t$ sufficiently small, since $u^t\to u$ implies $\Ical(\alpha,u)\subset\Ical(\alpha+th,u^t)$, where we used the property $(\Kbb v)_j=0$ for $j\in\Acal_s$ and $\alpha_j = 0$ for $j\in\Tcal\cup\Ical_0$, along with Cauchy-Schwarz inequality and $\|q_j^t\|\le \alpha_j+th_j$. Now, as $v\in\Ccal$ we know the bound in \cref{eq:ccal_cone} holds, i.e.,
  \begin{align*}
    \scalar{\Kbb^\top q}{v}\ge\sum_{j\in\Ical}\bigscalar{\alpha_j\frac{(\Kbb u)_j}{\|(\Kbb u)_j\|}}{(\Kbb v)_j} + \sum_{j\in\Bcal}\alpha_j\|(\Kbb v)_j\|.
  \end{align*}
  Therefore,
  \begin{multline*}
    \scalar{\Kbb^\top(q^t-q)}{v} \le \sum_{j\in\Ical}\alpha_j\bigscalar{\frac{(\Kbb u^t)_j}{\|(\Kbb u^t)_j\|}-\frac{(\Kbb u)_j}{\|(\Kbb u)_j\|}}{(\Kbb v)_j} \\+\sum_{j\in\Ical}t h_j \bigscalar{\frac{(\Kbb u^t)_j}{\|(\Kbb u^t)_j\|}}{(\Kbb v)_j} + \sum_{j\in\Bcal}t h_j\|(\Kbb v)_j\|+ \sum_{j\in\Tcal\cup\Ical_0}t h_j\|(\Kbb v)_j\|.
  \end{multline*}
  Finally, dividing both sides by $t$ yields the result.
\end{proof}
\begin{lemma}\label{lemma:bound_2}
  Let $\R^n\ni\alpha\ge 0$ and $\R^n\ni\alpha+th\ge 0$. Then, it holds
  \begin{multline*}
    \bigscalar{\Kbb^\top\left(\frac{q^t-q}{t}\right)}{\frac{u^t-u}{t}} \ge \sum_{j\in\Ical} \frac{\alpha_j}{t}\bigscalar{\frac{(\Kbb u^t)_j}{\|(\Kbb u^t)_j\|}-\frac{(\Kbb u)_j}{\|(\Kbb u)_j\|}}{\frac{(\Kbb u^t)_j-(\Kbb u)_j}{t}} \\
    +\sum_{j\in\Ical} h_j\bigscalar{\frac{(\Kbb u^t)}{\|(\Kbb u^t)_j\|}}{\frac{(\Kbb u^t)_j-(\Kbb u)_j}{t}} + \frac{1}{t}\sum_{j\in\Acal\cup\Ical_0}h_j(\|(\Kbb u^t)_j\|-\|(\Kbb u)_j\|),
  \end{multline*}
  where $\Acal = \Acal_s\cup\Bcal\cup\Tcal$.
\end{lemma}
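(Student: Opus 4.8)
The plan is to mimic the proof of \cref{lemma:bound_1}, exploiting that the bilinear form decouples over the row index $j$ and that only the rows in $\Ical$ need to be resolved exactly. First I would write
\[
  \bigscalar{\Kbb^\top\!\left(\frac{q^t-q}{t}\right)}{\frac{u^t-u}{t}} = \frac{1}{t^2}\sum_{j=1}^n \scalar{q^t_j-q_j}{(\Kbb u^t)_j-(\Kbb u)_j},
\]
and expand each row into the four bilinear pieces $\scalar{q^t_j}{(\Kbb u^t)_j}$, $-\scalar{q^t_j}{(\Kbb u)_j}$, $-\scalar{q_j}{(\Kbb u^t)_j}$ and $\scalar{q_j}{(\Kbb u)_j}$. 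The ``diagonal'' pieces are fixed by the complementarity relations in \cref{eq:perturbed_lower_level,eq:fenchel_dual_lower_level}, namely $\scalar{q^t_j}{(\Kbb u^t)_j}=(\alpha_j+th_j)\|(\Kbb u^t)_j\|$ and $\scalar{q_j}{(\Kbb u)_j}=\alpha_j\|(\Kbb u)_j\|$; the two ``cross'' pieces will be either bounded below through Cauchy--Schwarz together with the feasibility constraints $\|q^t_j\|\le\alpha_j+th_j$ and $\|q_j\|\le\alpha_j$, or kept exact when the dual variable is single valued. The two groups of indices to distinguish are $\Ical$ and its complement $\Acal\cup\Ical_0=\Acal_s\cup\Bcal\cup\Tcal\cup\Ical_0$.

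For $j\in\Ical$ I would use that, since $(\Kbb u)_j\neq 0$, $\alpha_j>0$, and $u^t\to u$ by \cref{teo:solution_operator_lipschitz}, for $t$ small one has $(\Kbb u^t)_j\neq 0$ and $\alpha_j+th_j>0$, so that $q_j=\alpha_j(\Kbb u)_j/\|(\Kbb u)_j\|$ and $q^t_j=(\alpha_j+th_j)(\Kbb u^t)_j/\|(\Kbb u^t)_j\|$ are uniquely determined. Substituting, splitting the coefficient $\alpha_j+th_j$ into $\alpha_j$ and $th_j$, and regrouping gives
\[
  \scalar{q^t_j-q_j}{(\Kbb u^t)_j-(\Kbb u)_j} = \alpha_j\bigscalar{\frac{(\Kbb u^t)_j}{\|(\Kbb u^t)_j\|}-\frac{(\Kbb u)_j}{\|(\Kbb u)_j\|}}{(\Kbb u^t)_j-(\Kbb u)_j} + th_j\bigscalar{\frac{(\Kbb u^t)_j}{\|(\Kbb u^t)_j\|}}{(\Kbb u^t)_j-(\Kbb u)_j},
\]
which, divided by $t^2$, is exactly (with equality) the pair of $\Ical$-sums on the right-hand side.

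For $j\in\Acal\cup\Ical_0$ I would not resolve the dual variable: estimating the two cross terms by $-\scalar{q^t_j}{(\Kbb u)_j}\ge-(\alpha_j+th_j)\|(\Kbb u)_j\|$ and $-\scalar{q_j}{(\Kbb u^t)_j}\ge-\alpha_j\|(\Kbb u^t)_j\|$ and adding the four pieces, the $\alpha_j$-contributions cancel and there remains the lower bound $th_j(\|(\Kbb u^t)_j\|-\|(\Kbb u)_j\|)$ for the $j$-th row. Dividing by $t^2>0$ and summing over this index set yields $\frac{1}{t}\sum_{j\in\Acal\cup\Ical_0}h_j(\|(\Kbb u^t)_j\|-\|(\Kbb u)_j\|)$, and combining it with the $\Ical$-contribution gives the asserted inequality.

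I do not expect a serious obstacle: the argument is the same row-by-row bookkeeping as in \cref{lemma:bound_1}. The one place needing care is the genericity step for $j\in\Ical$ — invoking $u^t\to u$ to ensure $(\Kbb u^t)_j\neq 0$ (so that the dual variable is single valued and the normalized-gradient expressions appearing in the statement are even well defined) and $\alpha_j+th_j>0$ for all sufficiently small $t$ — which is precisely where the Lipschitz continuity of the solution operator enters.
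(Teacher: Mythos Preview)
Your proposal is correct and follows essentially the same approach as the paper: decompose the bilinear form row by row, use the exact dual representation $q_j=\alpha_j(\Kbb u)_j/\|(\Kbb u)_j\|$ (and its perturbed counterpart) on $\Ical$, and on the complementary indices combine the complementarity identities with Cauchy--Schwarz and $\|q_j\|\le\alpha_j$, $\|q^t_j\|\le\alpha_j+th_j$ to obtain the $th_j(\|(\Kbb u^t)_j\|-\|(\Kbb u)_j\|)$ lower bound. The only cosmetic difference is that the paper splits the complement into $\Acal_s\cup\Bcal$ and $\Ical_0\cup\Tcal$ (using $q_j=0$ on the latter), whereas you treat $\Acal\cup\Ical_0$ uniformly; since $\alpha_j=0$ on $\Ical_0\cup\Tcal$ your Cauchy--Schwarz estimate collapses to the same thing.
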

\begin{proof}
  For $t$ small enough, we can split the product by their index set
  \begin{multline*}
    \bigscalar{\Kbb^\top\left(\frac{q^t-q}{t}\right)}{\frac{u^t-u}{t}} =\\ \sum_{j\in\Ical} \frac{\alpha_j}{t}\bigscalar{\frac{(\Kbb u^t)_j}{\|(\Kbb u^t)_j\|}-\frac{(\Kbb u)_j}{\|(\Kbb u)_j\|}}{\frac{(\Kbb u^t)_j-(\Kbb u)_j}{t}}
    +\sum_{j\in\Ical}h_j\bigscalar{\frac{(\Kbb u^t)_j}{\|(\Kbb u^t)_j\|}}{\frac{(\Kbb u^t)_j-(\Kbb u)_j}{t}}\\+\frac{1}{t}\sum_{j\in\Acal_s\cup\Bcal} \bigscalar{q^t_j-q_j}{\frac{(\Kbb u^t)_j-(\Kbb u)_j}{t}}+\frac{1}{t}\sum_{j\in\Ical_0\cup\Tcal} \bigscalar{q^t_j-q_j}{\frac{(\Kbb u^t)_j-(\Kbb u)_j}{t}}.
  \end{multline*}
  Focusing, on the index set $\Acal_s\cup\Bcal$, the complementarity relations in \cref{eq:perturbed_lower_level,eq:fenchel_dual_lower_level} yield
  \begin{align*}
    &\frac{1}{t^2}\sum_{j\in\Acal_s\cup\Bcal}\scalar{q^t_j-q_j}{(\Kbb u^t)_j-(\Kbb u)_j}\\
    &=\frac{1}{t^2} \sum_{j\in\Acal_s\cup\Bcal}\scalar{q^t_j}{(\Kbb u^t)_j} - \scalar{q_j^t}{(\Kbb u)_j} - \scalar{q_j}{(\Kbb u^t)_j} + \scalar{q_j}{(\Kbb u)_j},\\
    &\ge \frac{1}{t^2} \sum_{j\in\Acal_s\cup\Bcal}(\alpha_j+t h_j)\|(\Kbb u^t)_j\| - \underbrace{\|q_j^t\|}_{\le \alpha_j+t h_j}\|(\Kbb u)_j\| - \underbrace{\|q_j\|}_{\le \alpha_j}\|(\Kbb u^t)_j\| + \alpha_j\|(\Kbb u)_j\|,\\
    &\ge \frac{1}{t}\sum_{j\in\Acal_s\cup\Bcal} h_j (\|(\Kbb u^t)_j\|-\|(\Kbb u)_j\|).
  \end{align*}
  Using the same analysis over the set $\Ical_0\cup\Tcal$ we get
  \begin{align*}
    &\frac{1}{t^2}\sum_{j\in\Ical_0\cup\Tcal}\scalar{q^t_j-q_j}{(\Kbb u^t)_j-(\Kbb u)_j} =\frac{1}{t^2} \sum_{j\in\Ical_0\cup\Tcal}\scalar{q^t_j}{(\Kbb u^t)_j} - \scalar{q_j^t}{(\Kbb u)_j},\\
    &\ge \frac{1}{t^2} \sum_{j\in\Ical_0\cup\Tcal}t h_j\|(\Kbb u^t)_j\| - \underbrace{\|q_j^t\|}_{\le t h_j}\|(\Kbb u)_j\| \ge \frac{1}{t}\sum_{j\in\Ical_0\cup\Tcal} h_j (\|(\Kbb u^t)_j\|-\|(\Kbb u)_j\|).
  \end{align*}

\end{proof}

\begin{theorem} \label{thm: directional derivative}
  Let $\alpha\in\R^n_+$ and $h\in\R^n$ be a direction such that $\alpha+th\ge 0$ for $t$ small enough. The solution operator $S:\alpha\to S(\alpha)=u\in\R^m$ is directionally differentiable and its directional derivative $\eta\in\Ccal(\alpha,u)$ at $u$, in direction $h$, is given by the solution of the following variational inequality
    \begin{multline}\label{eq:directional_derivative_vi}
      \scalar{\phi''(u)\eta}{v-\eta} + \sum_{j\in\Ical}\alpha_j\scalar{T_j(\Kbb \eta)_j}{(\Kbb v)_j-(\Kbb\eta)_j} + h_j\bigscalar{\frac{(\Kbb u)_j}{\|(\Kbb u)_j\|}}{(\Kbb v)_j-(\Kbb \eta)_j} \\+ \sum_{j\in\Bcal} \frac{h_j}{\alpha_j}\scalar{q_j}{(\Kbb v)_j-(\Kbb\eta)_j} + \sum_{j\in\Ical_0\cup\Tcal} h_j (\|(\Kbb v)_j\|-\|(\Kbb\eta)_j\|)\ge 0,\;\forall v\in\Ccal,
    \end{multline}
  where $T_j(\Kbb v)_j = \frac{(\Kbb v)_j}{\|(\Kbb u)_j\|} - \frac{(\Kbb u)_j(\Kbb u)_j^\top(\Kbb v)_j}{\|(\Kbb u)_j\|^3}$ for $v\in\R^m$.
\end{theorem}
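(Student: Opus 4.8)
The plan is to identify the variational inequality \cref{eq:directional_derivative_vi} as the first-order optimality system of a strongly convex minimization problem over the closed convex cone $\Ccal(\alpha,u)$, and then show that the limit $\eta$ from \cref{eq:eta_definition} (known to exist along a subsequence and to lie in $\Ccal$) is precisely its unique minimizer; uniqueness of the minimizer then upgrades the subsequential convergence to full convergence and establishes directional differentiability. Concretely, I would define the quadratic functional
\begin{equation*}
  \Phi(v) := \tfrac{1}{2}\scalar{\phi''(u)v}{v} + \sum_{j\in\Ical}\tfrac{\alpha_j}{2}\scalar{T_j(\Kbb v)_j}{(\Kbb v)_j} + \sum_{j\in\Ical}h_j\bigscalar{\tfrac{(\Kbb u)_j}{\|(\Kbb u)_j\|}}{(\Kbb v)_j} + \sum_{j\in\Bcal}\tfrac{h_j}{\alpha_j}\scalar{q_j}{(\Kbb v)_j} + \sum_{j\in\Ical_0\cup\Tcal}h_j\|(\Kbb v)_j\|,
\end{equation*}
observe that $T_j$ is symmetric positive semidefinite on $\R^2$ (it is $\|(\Kbb u)_j\|^{-1}$ times the orthogonal projector onto the complement of $(\Kbb u)_j$), so together with strong convexity of $\phi$ the functional $\Phi$ is strongly convex and continuous, hence admits a unique minimizer over the nonempty closed convex set $\Ccal$; the associated optimality condition $\scalar{\Phi'(\eta)}{v-\eta}\ge 0$ for all $v\in\Ccal$ is exactly \cref{eq:directional_derivative_vi} (the last sum contributes the nonsmooth term $\|(\Kbb v)_j\|-\|(\Kbb\eta)_j\|$ rather than a linear term).

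Next I would show the limit $\eta$ satisfies this VI. Starting from the perturbed primal equation $\phi'(u^t)+\Kbb^\top q^t=0$ and the unperturbed one, subtract, divide by $t$, and test against $v-\tfrac{u^t-u}{t}$ for arbitrary $v\in\Ccal$. This produces $\scalar{\phi'(u^t)-\phi'(u)}{\,t^{-1}(u^t-u)}$-type terms on one side and on the other the products $\scalar{\Kbb^\top(q^t-q)/t}{\,v}$ and $\scalar{\Kbb^\top(q^t-q)/t}{(u^t-u)/t}$, which are exactly what \Cref{lemma:bound_1} and \Cref{lemma:bound_2} bound from above and below respectively. Combining the two lemmata eliminates the troublesome $\Ccal$-dependent dual difference terms and yields, after passing to the limit $t\to0$ (using $\phi\in C^2$, $u^t\to u$, $(u^t-u)/t\to\eta$, boundedness of $q^t$, and the Bouligand differentiability of the Euclidean norm together with the convexity-type estimate $\|(\Kbb u^t)_j\|-\|(\Kbb u)_j\|$ against $\|(\Kbb\eta)_j\|$), an inequality of the form $\scalar{\Phi'(\eta)}{v-\eta}\ge 0$. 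Care is needed with the $\Ical$ terms, where $\alpha_j t^{-1}\scalar{\tfrac{(\Kbb u^t)_j}{\|(\Kbb u^t)_j\|}-\tfrac{(\Kbb u)_j}{\|(\Kbb u)_j\|}}{(\Kbb w)_j}\to \alpha_j\scalar{T_j(\Kbb\eta)_j}{(\Kbb w)_j}$ by the chain rule applied to the (smooth, since $j\in\Ical$ means $(\Kbb u)_j\ne0$) normalization map, whose derivative is precisely $T_j$.

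The main obstacle is the passage to the limit in \Cref{lemma:bound_2}'s quadratic term $\sum_{j\in\Ical}\alpha_j t^{-1}\scalar{\tfrac{(\Kbb u^t)_j}{\|(\Kbb u^t)_j\|}-\tfrac{(\Kbb u)_j}{\|(\Kbb u)_j\|}}{\tfrac{(\Kbb u^t-\Kbb u)_j}{t}}$ and, correspondingly, a matching liminf/limsup argument: the right-hand side of \Cref{lemma:bound_1} and the right-hand side of \Cref{lemma:bound_2} agree in the limit, so the squeeze forces the quadratic form to converge to $\sum_{j\in\Ical}\alpha_j\scalar{T_j(\Kbb\eta)_j}{(\Kbb\eta)_j}$ (not merely to have the right liminf), and this in turn is what makes the limiting inequality sharp enough to be the VI rather than a weaker relation. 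Once the VI is obtained for the subsequential limit $\eta$, uniqueness of the solution of \cref{eq:directional_derivative_vi} (again from strong convexity of $\Phi$) shows every subsequence of $(u^t-u)/t$ has the same limit, so the full limit exists; this is the directional derivative $S'(\alpha;h)=\eta\in\Ccal(\alpha,u)$, completing the proof.
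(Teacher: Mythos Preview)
Your proposal is correct and follows essentially the same route as the paper: subtract the perturbed and unperturbed primal equations, test against $v-\tfrac{u^t-u}{t}$ with $v\in\Ccal$, apply \Cref{lemma:bound_1,lemma:bound_2} to control the dual difference quotient, pass to the limit using smoothness of $x\mapsto x/\|x\|$ on $\Ical$, and then invoke uniqueness of the VI solution to upgrade subsequential to full convergence. The only cosmetic difference is that you package uniqueness via strong convexity of an explicit functional $\Phi$, whereas the paper cites standard VI theory (Glowinski) for a V-elliptic bilinear form plus a convex l.s.c.\ term; also, your ``squeeze'' worry about the quadratic $T_j$ term on $\Ical$ is unnecessary, since there the normalization map is genuinely differentiable and the limit is a straightforward product of limits.
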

\begin{proof}
  To verify the variational inequality, let us take \cref{eq:perturbed_lower_level}, \cref{eq:fenchel_dual_lower_level} and test them with $v-\frac{u^t-u}{t}$, with $v\in\Ccal(\alpha,u)$
  \begin{align*}
    0&=\bigscalar{\frac{\phi'(u^t)-\phi'(u)}{t}}{v-\frac{u^t-u}{t}} + \bigscalar{\Kbb^\top\left(\frac{q^t-q}{t}\right)}{v-\frac{u^t-u}{t}},\\
    &=\bigscalar{\frac{\phi'(u^t)-\phi'(u)}{t}}{v-\frac{u^t-u}{t}} + \bigscalar{\Kbb^\top\left(\frac{q^t-q}{t}\right)}{v} - \bigscalar{\Kbb^\top\left(\frac{q^t-q}{t}\right)}{\frac{u^t-u}{t}}
  \end{align*}
  Now, applying the bounds in \cref{lemma:bound_1,lemma:bound_2} we have
  \begin{multline*}
    0\le\bigscalar{\frac{\phi'(u^t)-\phi'(u)}{t}}{v-\frac{u^t-u}{t}}\\ + \sum_{j\in\Ical}\frac{\alpha_j}{t}\bigscalar{\frac{(\Kbb u^t)_j}{\|(\Kbb u^t)_j\|}-\frac{(\Kbb u)_j}{\|(\Kbb u)_j\|}}{(\Kbb v)_j - \frac{(\Kbb u^t)_j-(\Kbb u)_j}{t}} \\+ h_j\bigscalar{\frac{(\Kbb u^t)}{\|(\Kbb u^t)_j\|}}{(\Kbb v)_j- \frac{(\Kbb u^t)_j-(\Kbb u)_j}{t}} + \sum_{j\in\Bcal} h_j\|(\Kbb v)_j\|\\ + \sum_{j\in\Ical_0\cup\Tcal} h_j\|(\Kbb v)_j\| - \frac{1}{t}\sum_{j\in\Acal_s\cup\Bcal\cup\Ical_0\cup\Tcal}h_j(\|(\Kbb u^t)_j\|-\|(\Kbb u)_j\|).
  \end{multline*}
  Taking the limit $t\to 0$, as well as, the differentiability of the term $x/\|x\|$ in the inactive set, and given that $(\Kbb\eta)_j=0$ in the strongly active set $\Acal_s$, it yields

  \begin{multline*}
    0 \le \scalar{\phi''(u)\eta}{v-\eta} + \sum_{j\in\Ical}\alpha_j\bigscalar{\left(\frac{I}{\|(\Kbb u)_j\|} - \frac{(\Kbb u)_j(\Kbb u)_j^\top}{\|(\Kbb u)_j\|^3}\right)(\Kbb\eta)_j}{(\Kbb v)_j-(\Kbb\eta)_j} \\+ h_j\bigscalar{\frac{(\Kbb u)_j}{\|(\Kbb u)_j\|}}{(\Kbb v)_j - (\Kbb \eta)_j} + \sum_{j\in\Bcal}h_j(\|(\Kbb v)_j\|-\|(\Kbb \eta)_j\|) + \sum_{j\in\Ical_0\cup\Tcal}h_j(\|(\Kbb v)_j\| - \|(\Kbb \eta)_j\|).
  \end{multline*}
  Using the definition for $T_j$ and recalling $v,\eta\in\Ccal$, the inequality takes the form in \cref{eq:directional_derivative_vi}.

  Now it is required to verify the uniqueness of the limit. For this purpose, let us note that \cref{eq:directional_derivative_vi} is a variational inequality
  \begin{multline*}
    \scalar{\phi''(u)\eta}{v-\eta} + \sum_{j\in\Ical}\alpha_j\scalar{T_j(\Kbb\eta)_j}{(\Kbb v)_j-(\Kbb\eta)_j} + \sum_{j\in\Bcal}\frac{h_j}{\alpha_j}\scalar{q_j}{(\Kbb v)_j-(\Kbb\eta)_j}\\ + \sum_{j\in\Ical_0\cup\Tcal}h_j(\|(\Kbb v)_j\|-\|(\Kbb \eta)_j\|) \ge  -\sum_{j\in\Ical} h_j\bigscalar{\frac{(\Kbb u)_j}{\|(\Kbb u)_j\|}}{(\Kbb v)_j - (\Kbb \eta)_j},\forall v\in\Ccal
  \end{multline*}
  Now, given that the function $f(z) := \sum_{j=1}^n \|(\Kbb z)_j\|$ is indeed convex, lower semicontinuous and proper, the right hand side is continuous and linear, and finally, the bilinear form in the left hand side is V-elliptic, we know by \cite{glowinski1985numerical}, that there exists a unique solution for this variational inequality.
\end{proof}

Using the demonstrated Bouligand differentiability of the solution operator, and the cooresponding  characterization of the directional derivative described in this section, we have proven the following result.
\begin{theorem}
  Let $\alpha^*\in\R^n_+$ be a local optimal solution of \cref{eq:reduced_cost_function} and $u^*=S(\alpha^*)$. Then $\alpha^*$ is a B-stationary point, i.e., it satisfies the following inequality
  \begin{equation}\label{eq:b_stationarity}
    \scalar{\nabla J(u^*)}{S'(\alpha^*; \alpha- \alpha^*)}\ge 0, \quad \forall \alpha \in \R^n_+,
  \end{equation}
  where $S'(\alpha^*; \alpha- \alpha^*)=: \eta$ is the unique solution to \Cref{eq:directional_derivative_vi}.
\end{theorem}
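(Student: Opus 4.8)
The plan is to assemble the ingredients already established in \Cref{sec:bouligand_stationarity}, since all the analytical work has been done. First, recall from \Cref{teo:solution_operator_lipschitz} that $S$ is locally Lipschitz continuous and from \Cref{thm: directional derivative} that $S$ is directionally differentiable at $\alpha^*$ along every admissible direction; together these two properties say precisely that $S$ is Bouligand differentiable. Since $J$ is continuously differentiable by hypothesis, the chain rule for B-differentiable maps applies to the reduced cost $j = J\circ S$, yielding that $j$ is B-differentiable with
\begin{equation*}
  j'(\alpha^*;h) = \scalar{\nabla J(S(\alpha^*))}{S'(\alpha^*;h)}
\end{equation*}
for every admissible direction $h$, which is exactly \eqref{eq:dir_derivative_reduced_cost}.

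Next I would use the convexity of the feasible set $\R^n_+$. Fix an arbitrary $\alpha\in\R^n_+$ and set $h := \alpha-\alpha^*$. Because $\R^n_+$ is convex, $\alpha^*+th = (1-t)\alpha^* + t\alpha\in\R^n_+$ for all $t\in[0,1]$; in particular $\alpha^*+th\ge 0$ for $t$ small enough, so $h$ is an admissible direction in the sense required by \Cref{thm: directional derivative}. Since $\alpha^*$ is a local minimizer of $j$ over $\R^n_+$, for $t>0$ sufficiently small one has $j(\alpha^*+th)\ge j(\alpha^*)$, and dividing by $t$ and passing to the limit gives
\begin{equation*}
  j'(\alpha^*;\alpha-\alpha^*) = \lim_{t\downarrow 0}\frac{j(\alpha^*+t(\alpha-\alpha^*)) - j(\alpha^*)}{t}\ge 0.
\end{equation*}

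Combining the two displays yields $\scalar{\nabla J(u^*)}{S'(\alpha^*;\alpha-\alpha^*)}\ge 0$ with $u^*=S(\alpha^*)$; and by \Cref{thm: directional derivative}, $S'(\alpha^*;\alpha-\alpha^*) = \eta$ is precisely the unique solution of the variational inequality \eqref{eq:directional_derivative_vi}. Since $\alpha\in\R^n_+$ was arbitrary, this establishes \eqref{eq:b_stationarity} and finishes the proof.

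The only genuinely delicate point is the invocation of the B-differentiable chain rule: one must be sure that the directional derivative $S'(\alpha^*;\cdot)$ furnished by \Cref{thm: directional derivative} is the directional derivative in the Hadamard sense, so that composition with the Fréchet-differentiable map $J$ is legitimate and produces \eqref{eq:dir_derivative_reduced_cost}. This upgrade from plain directional differentiability to Hadamard directional differentiability is exactly what the local Lipschitz continuity of $S$ from \Cref{teo:solution_operator_lipschitz} buys us, so no additional hypotheses are needed; the remaining steps are routine.
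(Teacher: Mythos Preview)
Your proposal is correct and follows exactly the approach the paper intends: the paper itself does not give a detailed proof but simply states that the result follows from the established Bouligand differentiability of $S$ (Lipschitz continuity from \Cref{teo:solution_operator_lipschitz} plus directional differentiability from \Cref{thm: directional derivative}) together with the chain rule \eqref{eq:dir_derivative_reduced_cost}. You have filled in precisely those routine details, including the upgrade to Hadamard directional differentiability via Lipschitz continuity, which is the only subtle point.
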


\subsection{Strict Complementarity}
The characterization of the directional differentiability can take different formulations if any of the active sets becomes empty. For instance, assuming the zero-inactive and triactive sets empty, i.e., $\Ical_0\cup\Tcal=\emptyset$, then the directional derivative of the solution operator can be written as the following variational inequality of the first kind
\begin{multline}\label{eq:dir_der_vi_1st_kind}
  \scalar{\phi''(u)\eta}{v-\eta} + \sum_{j\in\Ical}\alpha_j\scalar{T_j(\Kbb \eta)_j}{(\Kbb v)_j-(\Kbb\eta)_j} \\ + h_j\bigscalar{\frac{(\Kbb u)_j}{\|(\Kbb u)_j\|}}{(\Kbb v)_j-(\Kbb \eta)_j} + \sum_{j\in\Bcal} \frac{h_j}{\alpha_j} (\scalar{q_j}{(\Kbb v)_j-(\Kbb \eta)_j})\ge 0,\;\forall v\in\Ccal.
\end{multline}
Furthermore, assuming an empty biactive set and $\alpha_j=0$, for all $j$, we obtain that the solution operator is Fréchet differentiable as stated in the following theorem.

\begin{theorem}
  Let us assume the index set $\Bcal \cup \Ical_0 \cup \Tcal$ is empty. Then, the solution operator is Fréchet differentiable and the derivative can be computed as the solution of the following system of equations
  \begin{subequations}\label{eq:sd_linearized_system}
    \begin{align}
      \phi''(u)\eta + \Kbb^\top \lambda &= 0,\\
      \lambda_j - \alpha_jT_j(\Kbb\eta)_j - \frac{h_j}{\alpha_j}q_j &= 0,\;&&\forall j\in\Ical,\\
      (\Kbb\eta)_j &= 0,\;&&\forall j\in\Acal_s.
    \end{align}
  \end{subequations}
\end{theorem}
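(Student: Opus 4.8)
The plan is to specialize the variational inequality \eqref{eq:directional_derivative_vi} for the directional derivative to the present hypotheses and to exploit that the admissible cone collapses to a linear subspace. When $\Bcal\cup\Ical_0\cup\Tcal=\emptyset$, the cone $\Ccal(\alpha,u)$ of \eqref{eq:ccal} reduces to the linear subspace $V:=\{v\in\R^m:(\Kbb v)_j=0\ \text{for all } j\in\Acal_s\}$, and the sums over $\Bcal$ and over $\Ical_0\cup\Tcal$ in \eqref{eq:directional_derivative_vi} are empty. Hence $\eta\in V$ satisfies
\[
  \scalar{\phi''(u)\eta}{v-\eta}+\sum_{j\in\Ical}\alpha_j\scalar{T_j(\Kbb\eta)_j}{(\Kbb v)_j-(\Kbb\eta)_j}+\sum_{j\in\Ical}h_j\bigscalar{\tfrac{(\Kbb u)_j}{\|(\Kbb u)_j\|}}{(\Kbb v)_j-(\Kbb\eta)_j}\ge 0,\quad\forall v\in V.
\]
Since $V$ is a subspace, testing with $v=\eta\pm w$ for arbitrary $w\in V$ turns this into a variational \emph{equation} $a(\eta,w)=\ell(w)$ for all $w\in V$, with $a(\eta,w):=\scalar{\phi''(u)\eta}{w}+\sum_{j\in\Ical}\alpha_j\scalar{T_j(\Kbb\eta)_j}{(\Kbb w)_j}$ and $\ell(w):=-\sum_{j\in\Ical}h_j\scalar{(\Kbb u)_j/\|(\Kbb u)_j\|}{(\Kbb w)_j}$.

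Next I would record that $a$ is bounded and $V$-elliptic: each $T_j=\|(\Kbb u)_j\|^{-1}\bigl(I-\|(\Kbb u)_j\|^{-2}(\Kbb u)_j(\Kbb u)_j^\top\bigr)$ is symmetric positive semidefinite and $\alpha_j>0$ on $\Ical$, while $\phi''(u)$ is positive definite by strong convexity of $\phi$, so $a(w,w)\ge c\|w\|^2$. Lax--Milgram thus provides a unique $\eta\in V$ with $a(\eta,\cdot)=\ell$. To obtain the stated system \eqref{eq:sd_linearized_system}, I introduce a Lagrange multiplier $\lambda\in\R^{n\times2}$ for the constraints $(\Kbb\eta)_j=0$, $j\in\Acal_s$, defining $V$: the optimality conditions give $\phi''(u)\eta+\Kbb^\top\lambda=0$, with $\lambda_j=\alpha_jT_j(\Kbb\eta)_j+h_j(\Kbb u)_j/\|(\Kbb u)_j\|$ on $\Ical$; using $q_j=\alpha_j(\Kbb u)_j/\|(\Kbb u)_j\|$ there, this reads $\lambda_j-\alpha_jT_j(\Kbb\eta)_j-\tfrac{h_j}{\alpha_j}q_j=0$, whereas on $\Acal_s$ the component $\lambda_j$ is the free multiplier of the active constraint. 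This is exactly \eqref{eq:sd_linearized_system}.

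Finally, I would upgrade directional to Fréchet differentiability. Because $\ell$ is linear in $h$ and the system \eqref{eq:sd_linearized_system} is linear, the solution map $h\mapsto\eta=:S'(\alpha;h)$ is linear, and it is bounded since $\|\ell\|\le C\|h\|$ and $a$ is elliptic. Hence $S$ is Gâteaux differentiable at $\alpha$; combined with the local Lipschitz continuity from \Cref{teo:solution_operator_lipschitz}, and since for a locally Lipschitz map directional differentiability is automatically of Hadamard type, a linear directional derivative coincides with the Fréchet derivative, so $S'(\alpha)h=\eta$ solves \eqref{eq:sd_linearized_system}. The step I expect to be most delicate is not the ellipticity or the Lax--Milgram argument but the bookkeeping around the reduction: checking cleanly that the hypothesis empties the relevant sums and constraints, identifying the multiplier $\lambda$ correctly component-by-component on $\Ical$ versus $\Acal_s$, and invoking the precise ``locally Lipschitz $+$ Gâteaux $\Rightarrow$ Fréchet'' fact so that the concluding differentiability claim is proved rather than merely asserted.
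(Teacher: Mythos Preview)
Your proposal is correct and follows essentially the same route as the paper: reduce $\Ccal$ to the linear subspace $V$, turn the variational inequality into a variational equation by testing with $\eta\pm w$, deduce linearity of $h\mapsto\eta$ and hence Fr\'echet differentiability via Bouligand differentiability plus a linear directional derivative (the paper cites \cite[Proposition~3.1.2]{scholtes2012introduction} for this step), and then introduce multipliers for the constraints $(\Kbb\eta)_j=0$ on $\Acal_s$ to arrive at \eqref{eq:sd_linearized_system}. The only cosmetic difference is that the paper recasts the variational equation as a quadratic minimization problem and writes KKT conditions (noting Abadie CQ for the linear constraints), whereas you introduce the Lagrange multiplier directly and invoke Lax--Milgram; both paths yield the same componentwise identification of $\lambda_j$ on $\Ical$ versus $\Acal_s$.
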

\begin{proof}
  Using the empty biactive set assumption, we get that the cone $\Ccal$ becomes the following linear subspace $\Ccal(\alpha,u) = \{v\in\R^m:(\Kbb v)_j=0 \text{ if } (\Kbb u)_j=0\}$. Thus, the variational inequality in \cref{eq:dir_der_vi_1st_kind} becomes the following variational equation
  \begin{equation}\label{eq:strict_comp_var_eq}
    \scalar{\phi''(u)\eta}{v-\eta} + \sum_{j\in\Ical}\alpha_j\scalar{T_j(\Kbb \eta)_j}{(\Kbb v)_j-(\Kbb\eta)_j} + h_j\bigscalar{\frac{(\Kbb u)_j}{\|(\Kbb u)_j\|}}{(\Kbb v)_j-(\Kbb \eta)_j}= 0,\;\forall v\in\Ccal.
  \end{equation}
  \Cref{eq:strict_comp_var_eq} guarantees that the directional derivative of the solution operator is a linear mapping w.r.t. the direction $h$. Since $S$ is Bouligand differentiable, it implies the Fréchet differentiability \cite[Proposition 3.1.2]{scholtes2012introduction}. Furthermore, \cref{eq:strict_comp_var_eq} is equivalent to the following optimization problem

  \begin{mini}
    {\eta\in\Ccal} {\frac{1}{2}\scalar{\eta}{\phi''(u)\eta} + \sum_{j\in\Ical}\alpha_j\left(\frac{\|(\Kbb\eta)_j\|^2}{\|(\Kbb u)_j\|} - \frac{\scalar{(\Kbb u)_j}{(\Kbb\eta)_j}^2}{\|(\Kbb u)_j\|^3}\right)+h_j\bigscalar{(\Kbb \eta)_j}{\frac{(\Kbb u)_j}{\|(\Kbb u)_j\|}}}{}{}
  \end{mini}

  Then the KKT-optimality conditions for this problem look as follows
  \begin{align*}
    \scalar{\phi''(u)\eta}{v} + \sum_{j\in\Ical}\alpha_j\scalar{T_j(\Kbb \eta)_j}{(\Kbb v)_j} + h_j\bigscalar{\frac{(\Kbb u)_j}{\|(\Kbb u)_j\|}}{(\Kbb v)_j} + \sum_{j\in\Acal_s}\scalar{\nu_j}{(\Kbb v)_j} = 0,\\
    (\Kbb \eta)_j = 0,\forall j\in\Acal_s,
  \end{align*}
  with Lagrange multipliers $\nu_j\in\R^2$. Since all the constraints are linear, the Abadie constraint qualification condition is satisfied. By introducing $\lambda \in \R^{n\times 2}$ as
  \begin{equation*}
    \lambda_j :=\begin{cases}
      \nu_j,&\forall j\in\Acal_s\\
      \alpha_j T_j(\Kbb\eta)_j+\frac{h_j}{\alpha_j}q_j,&\forall j\in\Ical
    \end{cases}
  \end{equation*}
  the result is obtained.
\end{proof}

\section{Bouligand Subdifferential}\label{sec:bouligand_subdifferential}
Even though the Bouligand stationarity condition presented in \Cref{sec:bouligand_stationarity} holds for any local optimal solution without requiring any constraint qualification, its purely primal form is in general not amenable for algorithmic purposes; this limitation is related to the non-linearity of the directional derivative. As a remedy, in this section we will focus on the study of the Bouligand subdifferential of the solution operator $S$. The characterization of the linear elements of this subdifferential turns out to be useful when devising a numerical algorithm to solve the bilevel problem.

Thanks to the local Lipschitz continuity of $S$, showed in \cref{sec:bouligand_stationarity}, and Rademacher's theorem, we know the solution operator is differentiable almost everywhere. Denoting the set of points where this function is differentiable as $D_S$, the Bouligand subdifferential $\partial_B S(\alpha)$ is defined as follows.
\begin{definition}[Bouligand subdifferential]
  Let $S:\R^n\to\R^m$ be a locally Lipschitz function, and $\alpha\in\R^n$ arbitrary but fixed. The set
  \begin{equation}
    \partial_B S(\alpha) := \{G\in\R^{m\times n}:\exists\{\alpha_k\}\subset D_S,\;\alpha_k\to \alpha \wedge S'(\alpha_k)\to G\},
  \end{equation}
  is called \textit{Bouligand subdifferential} of $S$ at $\alpha$.
\end{definition}

In the next result a characterization of the elements of the Bouligand subdifferential is provided. We assume along this section that $\alpha_j>0$, i.e., $\Tcal \cup\Ical_0=\emptyset$.
\begin{theorem}\label{teo: G is a solution of the linear system}
  Let $G\in\partial_B S(\alpha)$. There exists a partition of the biactive set $\Bcal = \Bcal_1 \cup \Bcal_2$ such that, for any $h$ such that $\alpha+th\ge0$, $\tilde{\eta} :=Gh$ is the unique solution of the system
  \begin{subequations}\label{eq:G_linear_system}
    \begin{align}
      \scalar{\phi''(u)\tilde{\eta}}{v} + \sum_{j\in\Ical\cup\Bcal_2}\scalar{\tilde{\lambda}_j}{(\Kbb v)_j} &= 0,&\forall v\in V \label{eq:G_linear_system_1}\\
      \tilde{\lambda}_j - \alpha_jT_j(\Kbb \tilde{\eta})_j - \frac{h_j}{\alpha_j}q_j &= 0,&\forall j\in\Ical,\\
      \tilde{\lambda}_j - \frac{h_j}{\alpha_j}q_j&=0,&\forall j\in\Bcal_2
    \end{align}
  \end{subequations}
  where $V := \{v\in\R^m:\;(\Kbb v)_j = 0,\;\forall j\in \Acal_s\cup\Bcal_1;(\Kbb v)_j\in \spann(q_j),\forall j\in\Bcal_2\}$.
\end{theorem}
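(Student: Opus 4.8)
The plan is to realize $G$ as a limit of Jacobians of $S$, extract the partition $\Bcal=\Bcal_1\cup\Bcal_2$ from the limiting active‑set configuration, pass to the limit in the strict‑complementarity linearized systems along the sequence, and identify the limit as the solution of \cref{eq:G_linear_system} by a convex‑minimization argument. First I would fix a slack variable $q$ at $(\alpha,u)$ as in the statement, pick $\alpha_k\to\alpha$ with $\alpha_k\in D_S$ and $S'(\alpha_k)\to G$, set $u_k:=S(\alpha_k)$ (so $u_k\to u$ by \Cref{teo:solution_operator_lipschitz}), and choose slacks $q_k$ at $(\alpha_k,u_k)$ with $q_k\to q$ (possible because $u_k\to u$ and the dual feasible sets converge). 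Since $\alpha_j>0$ for all $j$ we have $(\alpha_k)_j>0$ for $k$ large, hence $\Tcal(\alpha_k,u_k)=\Ical_0(\alpha_k,u_k)=\emptyset$; moreover, since by the analysis behind \Cref{thm: directional derivative} the directional derivative at a point is a genuinely nonlinear function of the direction whenever the biactive set there is nonempty, differentiability of $S$ at $\alpha_k$ forces $\Bcal(\alpha_k,u_k)=\emptyset$. Thus each $\alpha_k$ is a strict‑complementarity point and $\eta_k:=S'(\alpha_k)h$ is the unique solution of \cref{eq:sd_linearized_system} at $(\alpha_k,u_k)$, equivalently the minimizer over $\Ccal(\alpha_k,u_k)$ of the strictly convex quadratic energy $E_k$ whose Euler--Lagrange system is \cref{eq:sd_linearized_system}. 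For $k$ large $\Ical\subseteq\Ical(\alpha_k,u_k)$ (there $(\Kbb u_k)_j\to(\Kbb u)_j\neq 0$) and $\Acal_s\subseteq\Acal_s(\alpha_k,u_k)$ (there $\|q_{k,j}\|<(\alpha_k)_j$), so each $j\in\Bcal$ lies in exactly one of $\Ical(\alpha_k,u_k)$ or $\Acal_s(\alpha_k,u_k)$; after a further subsequence this assignment is independent of $k$, which defines $\Bcal_1$ (indices going strongly active) and $\Bcal_2$ (those going inactive), with $\Ical(\alpha_k,u_k)=\Ical\cup\Bcal_2$, $\Acal_s(\alpha_k,u_k)=\Acal_s\cup\Bcal_1$, and $\Ccal(\alpha_k,u_k)=W:=\{v\in\R^m:(\Kbb v)_j=0\ \forall j\in\Acal_s\cup\Bcal_1\}$.

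Next I would take the limit. From $S'(\alpha_k)\to G$ we get $\eta_k\to\tilde\eta:=Gh$; for $j\in\Acal_s\cup\Bcal_1$, $(\Kbb\eta_k)_j=0$ passes to $(\Kbb\tilde\eta)_j=0$. For $j\in\Bcal_2$, setting $r_{k,j}:=\|(\Kbb u_k)_j\|\to 0$ and $w_{k,j}:=(\Kbb u_k)_j/r_{k,j}=q_{k,j}/(\alpha_k)_j\to q_j/\|q_j\|$, I would test the first equation of \cref{eq:sd_linearized_system} with $\eta_k$ and use $\scalar{T_j(\Kbb\eta_k)_j}{(\Kbb\eta_k)_j}=r_{k,j}^{-1}\|(I-w_{k,j}w_{k,j}^{\top})(\Kbb\eta_k)_j\|^2\ge 0$, the strong convexity of $\phi$, and the boundedness of the $\eta_k$, to obtain $\sum_{j\in\Bcal_2}\frac{(\alpha_k)_j}{r_{k,j}}\|(I-w_{k,j}w_{k,j}^{\top})(\Kbb\eta_k)_j\|^2\le C$ uniformly in $k$. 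Since $(\alpha_k)_j/r_{k,j}\to\infty$, this forces $(I-w_{k,j}w_{k,j}^{\top})(\Kbb\eta_k)_j\to 0$, hence $(I-\|q_j\|^{-2}q_jq_j^{\top})(\Kbb\tilde\eta)_j=0$, i.e.\ $(\Kbb\tilde\eta)_j\in\spann(q_j)$. Therefore $\tilde\eta\in V$, and for $j\in\Ical$ the component $(\alpha_k)_jT_j(\Kbb\eta_k)_j+\frac{h_j}{(\alpha_k)_j}q_{k,j}$ of the dual linearization converges to $\alpha_jT_j(\Kbb\tilde\eta)_j+\frac{h_j}{\alpha_j}q_j=:\tilde\lambda_j$.

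To conclude, I would observe that \cref{eq:G_linear_system} is exactly the KKT system of $\min_{v\in V}\bar E(v)$, where $\bar E$ is the functional of the same form as $E_k$ but built from $(\alpha,u,q_j/\|q_j\|)$ and with the $\Bcal_2$‑quadratic terms deleted (degenerate on $V$, where $(\Kbb v)_j\in\spann(q_j)$): eliminating $\tilde\lambda_j$ through the last two equations of \cref{eq:G_linear_system} on $\Ical$ and $\Bcal_2$ turns \cref{eq:G_linear_system_1} into the stationarity of $\bar E$ over $V$. Strong convexity of $\phi$ makes $\bar E$ strictly convex and coercive on the finite‑dimensional subspace $V$, which gives the uniqueness assertion; it then remains to identify $\tilde\eta$ as the minimizer. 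For this I would pass to the limit in $E_k(\eta_k)\le E_k(v)$: weak lower semicontinuity together with the nonnegativity of the $\Bcal_2$‑penalties yields $\bar E(\tilde\eta)\le\liminf_k E_k(\eta_k)$, while for each $v\in V$ a recovery sequence $v_k\to v$, $v_k\in W$, with $(\Kbb v_k)_j\in\spann((\Kbb u_k)_j)$ for $j\in\Bcal_2$ --- which annihilates the singular $\Bcal_2$‑penalty so that $E_k(v_k)\to\bar E(v)$ --- yields $\limsup_k E_k(\eta_k)\le\bar E(v)$; hence $\tilde\eta$ minimizes $\bar E$ over $V$ and solves \cref{eq:G_linear_system}. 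Such $v_k$ exist since the subspaces $W\cap\{v:(\Kbb v)_j\in\spann((\Kbb u_k)_j),\ j\in\Bcal_2\}$ converge to $V$, after if necessary a further subsequence along which their dimension is constant.

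I expect the main obstacle to be precisely this passage to the limit at the indices $j\in\Bcal_2$, where the coefficient $(\alpha_k)_j/\|(\Kbb u_k)_j\|$ in front of $T_j$ blows up: the elementary energy bound only gives $(I-w_{k,j}w_{k,j}^{\top})(\Kbb\eta_k)_j=O(r_{k,j}^{1/2})$, which does not by itself control $(\alpha_k)_jT_j(\Kbb\eta_k)_j$, the $\Bcal_2$‑component of the dual linearization. Obtaining the sharp limit equation $\tilde\lambda_j=\frac{h_j}{\alpha_j}q_j$, i.e.\ $(\alpha_k)_jT_j(\Kbb\eta_k)_j\to 0$, is exactly what forces one to work on the degenerate subspace $V$ and use the $\Bcal_2$‑aligned recovery sequences; verifying the convergence of those constraint subspaces to $V$ (a rank‑stabilization argument for the discrete‑gradient rows, possibly along a further subsequence) is the delicate technical point.
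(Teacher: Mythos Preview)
Your proposal is correct and follows the paper's overall strategy: pick $\alpha_k\in D_S$ with $S'(\alpha_k)\to G$, observe strict complementarity at each $\alpha_k$, split $\Bcal$ according to whether the index becomes inactive or strongly active along the sequence, use the energy bound on $\sum_{j}\scalar{(\zeta_k)_j}{(\Kbb\eta_k)_j}$ (with $\zeta_{k,j}:=(\alpha_k)_j(T_k)_j(\Kbb\eta_k)_j$) to force $(\Kbb\tilde\eta)_j\in\spann(q_j)$ on $\Bcal_2$, and then pass to the limit in the variational equation.

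The difference is only in how the singular $\Bcal_2$ term is handled in the limit. You recast the linearized systems as Euler--Lagrange equations of energies $E_k$ and argue by $\Gamma$-convergence, building recovery sequences $v_k\to v$ with $(\Kbb v_k)_j\in\spann((\Kbb u_k)_j)$ so that the blowing-up penalty is annihilated exactly. The paper instead tests the weak form directly against a fixed $v\in V$ and uses the algebraic identity $(T_k)_j(\Kbb u_k)_j=0$: since $(q_k)_j=(\alpha_k)_j(\Kbb u_k)_j/\|(\Kbb u_k)_j\|$, one has $\scalar{(\zeta_k)_j}{(q_k)_j}=0$ for every $k$, and for $v\in V$ with $(\Kbb v)_j=cq_j$ this yields $\scalar{(\zeta_k)_j}{(\Kbb v)_j}\to 0$ in one line. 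Your recovery-sequence idea is really the same cancellation viewed variationally; the gain of the paper's route is that it avoids the subspace-convergence/rank-stabilization step you flag as the delicate point, while your route is slightly more robust in that it never tests against a vector not exactly in the kernel of the singular operator.
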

\begin{proof}
  We know the solution operator is locally Lipschitz continuous (see \cref{teo:solution_operator_lipschitz}), which implies it is differentiable almost everywhere. Let us consider a sequence $\{\alpha_k\}\subset D_S$ such that $\alpha_k\to \alpha$ and $S'(\alpha_k)\to G$. Due to the differentiability in the elements of this sequence, the parameter sequence fulfills $(\alpha_k)_j>0$, for all $j=1,\dots,n$. Moreover, thanks to the Lipschitz continuity of $S$ we know that
  \begin{align*}
    u_k = S(\alpha_k) &\to S(\alpha) = u,\\
    \Kbb^\top q_k = -\phi'(u_k) &\to -\phi'(u) = \Kbb^\top q.
  \end{align*}
  This last statement follows from the fact that $\{q_k\}$ is also bounded and, therefore, has a converging subsequence. Now, each of this subsequence elements $(u_k,q_k)$ define their respective inactive $\Ical^k:=\Ical(\alpha_k,u_k)$ and strongly active $\Acal_s^k:=\Acal_s(\alpha_k,u_k)$ sets.\\
  By continuity, we know that $\Ical \subset\Ical^k$ and $\Acal_s \subset\Acal_s^k$, for $k$ sufficiently large. Since $\{\alpha_k\}\subset D_S$, it then follows that $\eta_k:=S'(\alpha_k)h$ satisfies the system
  \begin{subequations}\label{eq:differentiable_sequence}
    \begin{align}
      \phi''(u_k)\eta_k + \Kbb^\top \lambda_k&= 0 , \label{eq:differentiable_sequence_1}\\
      (\lambda_k)_j - (\alpha_k)_j(T_k)_j(\Kbb\eta_k)_j &= \frac{h_j}{(\alpha_k)_j} \Kbb_j^\top  (q_k)_j,&\forall j\in\Ical^k,\label{eq:differentiable_sequence_2}\\
      (\Kbb\eta_k)_j &= 0,&\forall j\in\Acal_s^k,
    \end{align}
  \end{subequations}
  or equivalently,
  \begin{equation}\label{eq:differentiable_sequence2}
    \scalar{\bm{\phi}''(u_k)\eta_k}{v}+\sum_{j\in\Ical^k}\bigscalar{(\alpha_k)_j(T_k)_j(\Kbb \eta_k)_j}{(\Kbb v)_j} + \frac{h_j}{(\alpha_k)_j} \scalar{(q_k)_j}{(\Kbb v)_j} = 0,\forall v\in V^k,
  \end{equation}
  with $V^k:=\{v\in\R^m: (\Kbb v)_j=0, \forall j\in\Acal_s^k\}$. From the definition of the Bouligand subdifferential it follows that $\tilde{\eta} = \lim_{k\to\infty}\eta_k$. Moreover, since for $j\in\Ical$ the sequence $\{(\lambda_k)_j\}$ is bounded, then there exists a  subsequence that converges to a limit point $\tilde{\lambda}_j$. Therefore, up to a subsequence, by passing to the limit we get
  \begin{subequations}
    \begin{align*}
        \tilde{\lambda}_j-\alpha_jT_j(\Kbb\tilde{\eta})_j - \frac{h_j}{\alpha_j}q_j &= 0,\;\forall j\in \Ical,\\
        (\Kbb \tilde{\eta})_j &= 0,\;\forall j\in\Acal_s.
    \end{align*}
\end{subequations}
Let us now consider a partition of the biactive set $\Bcal = \Bcal_1 \cup \Bcal_2$, with
\begin{align*}
  \Bcal_1(\alpha,u) := \{j\in\Bcal(\alpha,u):\exists \{u_{k_l}\}:(\Kbb u_{k_l})_j = 0,\forall l\} \quad \text{and} \quad
  \Bcal_2(\alpha,u) := \Bcal(\alpha,u)\backslash\Bcal_1(\alpha,u).
\end{align*}
For the index set $\Bcal_1$ we know the subsequence $(\Kbb \eta_{k_l})_j=0$, for all $k$. Since $\eta_k\to \tilde{\eta}$, we get
\begin{equation*}
  (\Kbb \tilde{\eta})_j = 0,\;\forall j\in\Acal_s\cup\Bcal_1.
\end{equation*}

Considering the partition $\Bcal_2$, we approach a biactive point by a sequence of points such that $(\Kbb u_k)_j\neq 0$, i.e., $j\in\Ical^k$. Let us first notice that the term on the right hand side of \Cref{eq:differentiable_sequence_2} is uniformly bounded and, therefore, as $k \to \infty$,
\begin{equation*}
  \sum_{j\in\Ical^k}\frac{h_j}{(\alpha_k)_j} \scalar{(q_k)_j}{(\Kbb v)_j} \to \sum_{j\in\Ical \cup \Bcal_2}\frac{h_j}{\alpha_j} \scalar{q_j}{(\Kbb v)_j}, \quad \forall v \in V.
\end{equation*}
In addition, defining $(\zeta_k)_j=(\alpha_k)_j(T_k)_j(\Kbb\eta_k)_j$, for $j \in \Ical^k$, we get that
\begin{equation*}
  \scalar{(\zeta_k)_j}{(\Kbb \eta_k)_j} = \frac{(\alpha_k)_j}{\|(\Kbb u_k)_j\|} \left( \|(\Kbb \eta_k)_j\|^2 -\frac{1}{\|(\Kbb u_k)_j\|^2}\scalar{(\Kbb \eta_k)_j}{(\Kbb u_k)_j}^2 \right) \geq 0, ~\forall j \in \Ical^k.
\end{equation*}
Thanks to \Cref{eq:differentiable_sequence2}, we also get that
\begin{equation*}
  0 \leq \scalar{(\zeta_k)_j}{(\Kbb \eta_k)_j}  \leq \scalar{\phi''(u_k)\eta_k}{\eta_k} + \sum_{j\in\Ical^k}\scalar{(\zeta_k)_j}{(\Kbb \eta_k)_j} \leq \sum_{j\in\Ical^k} |h_j| \|(\Kbb \eta_k)_j\|,
\end{equation*}
which, since $\eta_k \to \tilde{\eta}$, as $k \to \infty,$ implies that $\scalar{(\zeta_k)_j}{(\Kbb \eta_k)_j}$ is uniformly bounded. Since for $j \in \Bcal_2$ we know that $(\Kbb u_k)_j \to 0$, it follows from the previous relations that
$$\alpha_j^2 \|(\Kbb \tilde \eta)_j\|^2 -\scalar{q_j}{(\Kbb \tilde \eta)_j}^2 = \lim_{k \to \infty} (\alpha_k)_j^2 \|(\Kbb \eta_k)_j\|^2 -\scalar{(q_k)_j}{(\Kbb \eta_k)_j}^2 =0,$$
which implies that $(\Kbb \tilde \eta)_j \in \spann(q_j),\forall j\in\Bcal_2$.

Finally, for any $v\in V$, we obtain that
\begin{align*}
  \lim_{k\to\infty}\scalar{(\zeta_k)_j}{(\Kbb v)_j} &= \lim_{k \to\infty}\scalar{(\zeta_k)_j}{c(q_k)_j} = \lim_{k \to\infty}\bigscalar{(\zeta_k)_j}{c(\alpha_k)_j\frac{(\Kbb u_k)_j}{\|(\Kbb u_k)_j\|}},\\
  &= c\lim_{k \to\infty}\bigscalar{(\alpha_k)_j\frac{(\Kbb \eta_k)_j}{\|(\Kbb u_k)_j\|}}{(\alpha_k)_j\frac{(\Kbb u_k)_j}{\|(\Kbb u_k)_j\|}}\\
  & \hspace{2cm} - \bigscalar{(\alpha_k)_j\frac{\scalar{(\Kbb\eta_k)_j}{(\Kbb u_k)_j}(\Kbb u_k)_j}{\|(\Kbb u_k)_j\|^3}}{(\alpha_k)_j\frac{(\Kbb u_k)_j}{\|(\Kbb u_k)_j\|}},\\
  &=c\lim_{k \to\infty}\frac{(\alpha_k)_j^2}{\|(\Kbb u_k)_j\|^2}\scalar{(\Kbb \eta_k)_j}{(\Kbb u_k)_j}\\
  & \hspace{2cm} -\frac{(\alpha_k)_j^2}{\|(\Kbb u_k)_j\|^4}\scalar{(\Kbb \eta_k)_j}{(\Kbb u_k)_j}\scalar{(\Kbb u_k)_j}{(\Kbb u_k)_j}
  =0.
\end{align*}
By passing to the limit in \cref{eq:differentiable_sequence2} the result is obtained.
\end{proof}

\begin{corollary}\label{cor: G is a solution of the linear system}
  Let $G\in\partial_B S(\alpha)$. There exists a partition of the biactive set $\Bcal = \Bcal_1 \cup \Bcal_2$ and a multiplier $\theta \in \R^m$ such that, for any $h$ such that $\alpha+th\ge0$, $\tilde{\eta}:= Gh$ is the unique solution of the system
  \begin{subequations}\label{eq:full G_linear_system}
    \begin{align}
      \phi''(u)\tilde{\eta} + \Kbb^T \theta &= 0\\
      \theta_j - \alpha_jT_j(\Kbb \tilde{\eta})_j - \frac{h_j}{\alpha_j} q_j &= 0, &&\forall j\in\Ical,\\
      \scalar{\theta_j}{q_j}-\alpha_j h_j&=0, &&\forall j\in\Bcal_2.
    \end{align}
  \end{subequations}
\end{corollary}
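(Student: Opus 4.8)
The plan is to read the corollary as the Karush--Kuhn--Tucker form of the characterization already obtained in \Cref{teo: G is a solution of the linear system}. With the partition $\Bcal=\Bcal_1\cup\Bcal_2$ and the subspace $V$ from that theorem, $\tilde\eta=Gh$ is the unique solution of the variational equation \eqref{eq:G_linear_system}, which is the first-order optimality condition of a strongly convex quadratic program posed over $V$ (exactly as in the Fr\'echet-differentiable case treated above). First I would make that QP explicit and observe that $V$ is cut out of $\R^m$ by the linear constraints $(\Kbb v)_j=0$ for $j\in\Acal_s\cup\Bcal_1$ and $\scalar{(\Kbb v)_j}{q_j^{\perp}}=0$ for $j\in\Bcal_2$, where $q_j^{\perp}$ denotes a nonzero vector spanning the one-dimensional orthogonal complement of $\spann(q_j)$ in $\R^2$ (well defined since $\|q_j\|=\alpha_j>0$ on the biactive set). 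All constraints being linear, Abadie's CQ holds, so the KKT system of the QP produces multipliers $\nu_j\in\R^2$ for $j\in\Acal_s\cup\Bcal_1$ and $\beta_j\in\spann(q_j^{\perp})$ for $j\in\Bcal_2$, in addition to the relations of \eqref{eq:G_linear_system} on $\Ical\cup\Bcal_2$.

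Next I would assemble the single multiplier $\theta\in\R^{n\times2}$ by setting $\theta_j:=\alpha_jT_j(\Kbb\tilde\eta)_j+\frac{h_j}{\alpha_j}q_j$ for $j\in\Ical$ (the $\tilde\lambda_j$ of \Cref{teo: G is a solution of the linear system}), $\theta_j:=\nu_j$ for $j\in\Acal_s\cup\Bcal_1$, and $\theta_j:=\frac{h_j}{\alpha_j}q_j+\beta_j$ for $j\in\Bcal_2$. Collecting the $\Kbb^{\top}(\cdot)$ contributions in the stationarity equation of the QP then yields precisely $\phi''(u)\tilde\eta+\Kbb^{\top}\theta=0$, the first equation of \eqref{eq:full G_linear_system}; the second equation is just the definition of $\theta_j$ on $\Ical$. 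For the third, I would pair $\theta_j$ with $q_j$ on $\Bcal_2$: since $\beta_j\perp q_j$ and $\|q_j\|=\alpha_j$ there, one gets $\scalar{\theta_j}{q_j}=\frac{h_j}{\alpha_j}\|q_j\|^2=\alpha_j h_j$, which is the claimed relation. Uniqueness of $\tilde\eta$ is inherited from \Cref{teo: G is a solution of the linear system} (equivalently, from strong convexity of the QP together with invertibility of $\phi''(u)$), and the converse direction --- a solution of \eqref{eq:full G_linear_system} complemented by the feasibility $\tilde\eta\in V$ solves \eqref{eq:G_linear_system} --- follows by restricting the test vectors in the first equation to $V$, on which both the $\nu_j$ and the $\beta_j$ contributions vanish.

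The step requiring the most care is the handling of the $\Bcal_2$-constraint. Unlike the constraints over $\Acal_s\cup\Bcal_1$, the requirement $(\Kbb v)_j\in\spann(q_j)$ has codimension one in $\R^2$, so its multiplier must be sought in the one-dimensional space $\spann(q_j^{\perp})$; the key observation is that this extra degree of freedom in $\theta_j$ is invisible to the pairing with $q_j$, which is exactly why the $\Bcal_2$-equation of \eqref{eq:full G_linear_system} is the scalar identity $\scalar{\theta_j}{q_j}-\alpha_j h_j=0$ rather than the full vector identity $\theta_j=\frac{h_j}{\alpha_j}q_j$ valid in the strict-complementarity situation. Everything else is a routine rewriting of the KKT conditions and a passage to the limit that has already been performed in \Cref{teo: G is a solution of the linear system}.
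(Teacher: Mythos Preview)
Your proposal is correct. Both you and the paper derive the multiplier $\theta$ from the fact that the variational equation \eqref{eq:G_linear_system_1} forces a certain functional to lie in $V^\perp$, and both arrive at the same componentwise definition of $\theta$ with the key property $\scalar{\psi_j}{q_j}=0$ on $\Bcal_2$. The difference is in how this orthogonality is extracted. The paper works directly with the linear algebra: it writes $V=\bigl(\bigcap_{j\in\Acal_s\cup\Bcal_1}V_j^1\bigr)\cap\bigl(\bigcap_{j\in\Bcal_2}V_j^2\bigr)$, uses $V^\perp=\sum_j(V_j^1)^\perp+\sum_j(V_j^2)^\perp$, and then computes each summand via the range--kernel duality $\ker(\Kbb_j)^\perp=\range(\Kbb_j^\top)$, obtaining $\Kbb_j^\top\pi_j$ on $\Acal_s\cup\Bcal_1$ and $\Kbb_j^\top\psi_j$ with $\scalar{\psi_j}{q_j}=0$ on $\Bcal_2$. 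You instead recast \eqref{eq:G_linear_system} as the first-order optimality system of a strongly convex QP over $V$, describe $V$ by the scalar linear constraints $\scalar{(\Kbb v)_j}{q_j^\perp}=0$ on $\Bcal_2$, and invoke KKT with Abadie CQ to produce the multipliers; this mirrors exactly the argument the paper already used in the strict-complementarity theorem. Your route is shorter and more in keeping with the paper's own earlier reasoning, while the paper's explicit decomposition of $V^\perp$ makes the provenance of the $\Bcal_2$-orthogonality $\scalar{\psi_j}{q_j}=0$ more transparent without appealing to an optimization black box.
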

\begin{proof}
  Let us consider the functional $\mathcal F \in \R^n$ defined by
  \begin{equation*}
    (\mathcal F ,v):= (\phi''(u)\tilde{\eta},v) + \sum_{j\in\Ical}\scalar{\alpha_jT_j(\Kbb \tilde{\eta})_j}{(\Kbb v)_j} +\sum_{j\in\Ical \cup \Bcal_2} \frac{h_j}{\alpha_j} \scalar{q_j}{(\Kbb v)_j},\;\forall v\in V.
  \end{equation*}
  \Cref{eq:G_linear_system_1} can then be written as $\mathcal F \in V^\perp$. Thanks to the structure of the linear subspace $V$, it can be represented in a separated way as
$
    V= \left( \bigcap_{j \in \Acal_S \cup \Bcal_1} V^1_j \right) \cap \left( \bigcap_{j \in \Bcal_2} V^2_j \right),
$
  where
  \begin{align*}
    & V^1_j:= \{ v \in \R^n: (\Kbb v)_j =0 \}, &&  j \in \Acal_S \cup \Bcal_1,\\
    & V^2_j:= \{ v \in \R^n: (\Kbb v)_j \in \spann(q_j) \}, &&  j \in \Bcal_2.
  \end{align*}
  Consequently, $V^\perp = \sum_{j \in \Acal_S \cup \Bcal_1} (V^1_j)^\perp + \sum_{j \in \Bcal_2} (V^2_j)^\perp.$

  For $j \in \Acal_S \cup \Bcal_1$, we get that $(V^1_j)^\perp= \ker (\Kbb_j)^\perp$. Thanks to the orthogonality relations, it follows that $\ker (\Kbb_j)^\perp=\range (\Kbb_j^T)$. Hence, for any $\xi_j \in (V^1_j)^\perp$, there exist $\pi_j$ such that $\xi_j= \Kbb_j^T \pi_j$. Consequently,
  \begin{equation*}
    \sum_{j \in \Acal_S \cup \Bcal_1} (V^1_j)^\perp = \sum_{j \in \Acal_S \cup \Bcal_1} \Kbb_j^T \pi_j, \quad \pi_j \in \R^2.
  \end{equation*}

  For $j \in \Bcal_2$,
  any $v \in V^2_j$ can be represented as
  \begin{equation*}
    v= \phi + \varphi, \quad \text{ with }(\Kbb_j \varphi)=0 \text{ and } \phi \in \range (\Kbb_j^T).
  \end{equation*}
  Since $(\Kbb v)_j \in \spann(q_j)$ and $(\Kbb_j \varphi)=0$, it follows that $(\Kbb v)_j \in \spann(q_j)$ as well.
  Let us now consider $w_j \in (V^2_j)^\perp$, which can be represented as $w_j=\tilde{w}_j+\hat{w}_j$, where $\tilde{w}_j \in \range(\Kbb_j^T)$ and $\hat{w}_j \in \range(\Kbb_j^T)^\perp = \ker(\Kbb_j)$. Consequently, there exists $\psi_j$ such that
  \begin{equation*}
    w_j = \Kbb_j^T \psi_j + \hat{w}_j, \quad \text{ with }\Kbb_j \hat{w}_j=0.
  \end{equation*}
  Taking the scalar product with $v_j \in  V^2_j$ we get
  \begin{align*}
    (w_j,v_j)  = (\Kbb_j^T \psi_j + \hat{w}_j, \phi + \varphi) = \scalar{\psi_j}{\Kbb_j \phi} +  (\hat{w}_j, \Kbb_j^T \psi) + (\hat{w}_j, \varphi)= c \scalar{\psi_j}{q_j} + (\hat{w}_j, \varphi),
  \end{align*}
  since $\Kbb_j \varphi =\Kbb_j \hat{w}_j= 0$. For the product to be zero, it is then required that $(\hat{w}_j, \varphi)=0, \forall \varphi \in \ker(\Kbb_j)$ and $\scalar{\psi_j}{q_j}=0.$
  Since $\hat{w}_j$ belongs to $\ker(\Kbb_j)$ as well, it follows that $\hat{w}_j=0.$
  Consequently,
  \begin{equation*}
    \sum_{j \in \Bcal_2} (V^2_j)^\perp = \sum_{j \in \Bcal_2} \Kbb_j^T \psi_j, \quad \psi_j \in \R^2: \scalar{\psi_j}{q_j}=0.
  \end{equation*}

  Altogether, we then obtain that there exist multipliers $\pi_j$ and $\psi_j$ such that
  \begin{equation*}
    \mathcal F + \sum_{j \in \Acal_S \cup \Bcal_1} \Kbb_j^T \pi_j + \sum_{j \in \Bcal_2} \Kbb_j^T \psi_j =0,
  \end{equation*}
  with $\scalar{\psi_j}{q_j}=0$. Defining
\begin{equation*}
  \theta_j := \begin{cases}
    \alpha_jT_j(\Kbb \tilde{\eta})_j + \frac{h_j}{\alpha_j}q_j, & j\in\Ical,\\
    \pi_j, & j \in \Acal_S \cup \Bcal_1,\\
    \psi_j + \frac{h_j}{\alpha_j}q_j, & j \in \Bcal_2,
\end{cases}
\end{equation*}
the result is obtained.
\end{proof}

Next we verify that, along a given direction, there exists a solution of system \cref{eq:G_linear_system} which coincides with the directional derivative. When properly characterized, this enables us to use of a linear representative of the (otherwise nonlinear) directional derivative within a solution algorithm.

\begin{theorem}\label{teo:exist_linearized_element}
  For any $\alpha\in\R^n_+$ and $h\in\R^n$ such that $\alpha + th \ge 0$, there exists a linearized element $\tilde{\eta}=Gh$ such that $S'(\alpha)h=Gh$.
\end{theorem}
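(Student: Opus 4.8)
The plan is to extract, directly from the directional derivative, the partition of the biactive set appearing in \Cref{teo: G is a solution of the linear system}, to check that the directional derivative solves the associated linear system for that partition, and finally to exhibit an element of $\partial_B S(\alpha)$ that realises it. Throughout one uses that $\Tcal\cup\Ical_0=\emptyset$ in this section, so $\alpha_j>0$ and $\alpha+th>0$ for $t$ small.

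I would first set $\eta:=S'(\alpha;h)$, which by \Cref{thm: directional derivative} is the unique solution of the variational inequality \cref{eq:directional_derivative_vi} over $\Ccal(\alpha,u)$. Since $\eta\in\Ccal$, for each $j\in\Bcal$ we have $\scalar{q_j}{(\Kbb\eta)_j}=\alpha_j\|(\Kbb\eta)_j\|$ with $\|q_j\|=\alpha_j>0$, so equality in Cauchy--Schwarz forces $(\Kbb\eta)_j=c_j q_j$ for some $c_j\ge0$. I would then define $\Bcal_1:=\{j\in\Bcal:(\Kbb\eta)_j=0\}$ and $\Bcal_2:=\Bcal\setminus\Bcal_1$, and take $V$ to be the subspace attached to this splitting as in \Cref{teo: G is a solution of the linear system}. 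Observe that $V$ is exactly the lineality space of the tangent cone to $\Ccal$ at $\eta$: on $\Acal_s\cup\Bcal_1$ the component $(\Kbb\cdot)_j$ is pinned to $0$, whereas on $\Bcal_2$, where $(\Kbb\eta)_j=c_j q_j$ with $c_j>0$, one can move freely along $\spann(q_j)$ without leaving $\Ccal$.

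Next I would show that $\eta$ solves system \cref{eq:G_linear_system} for the partition $(\Bcal_1,\Bcal_2)$. The key point is that for every $w\in V$ both $\eta+sw$ and $\eta-sw$ lie in $\Ccal$ once $|s|$ is small: on $\Acal_s\cup\Bcal_1$ the component $(\Kbb w)_j=0$ leaves those constraints of $\Ccal$ untouched, and on $\Bcal_2$, writing $(\Kbb w)_j=d_j q_j$, the perturbed component $(\Kbb(\eta\pm sw))_j=(c_j\pm s\,d_j)q_j$ stays a nonnegative multiple of $q_j$ for $s$ small since $c_j>0$, so the collinearity constraint is preserved. Testing \cref{eq:directional_derivative_vi} with $v=\eta+sw$ and $v=\eta-sw$, dividing by $s$ and sending $s\to0$, turns the inequality into the variational equality \cref{eq:G_linear_system_1} on $V$ (using $q_j=\alpha_j(\Kbb u)_j/\|(\Kbb u)_j\|$ for $j\in\Ical$, and the vanishing of $(\Kbb w)_j$ on $\Bcal_1$, so that the $\Bcal$-sum collapses to a sum over $\Bcal_2$). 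Setting $\tilde\lambda_j:=\alpha_j T_j(\Kbb\eta)_j+\frac{h_j}{\alpha_j}q_j$ on $\Ical$ and $\tilde\lambda_j:=\frac{h_j}{\alpha_j}q_j$ on $\Bcal_2$ recovers \cref{eq:G_linear_system}; its solution is unique since the system is the optimality condition of a strongly convex quadratic program on $V$ (the quadratic part being $\phi''(u)$ plus the nonnegative forms $v\mapsto\alpha_j\scalar{T_j(\Kbb v)_j}{(\Kbb v)_j}$, $j\in\Ical$). Hence the linear map $G$ sending a direction to the solution of \cref{eq:G_linear_system} for this fixed partition satisfies $Gh=\eta$.

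It then remains to show $G\in\partial_B S(\alpha)$, i.e.\ that the partition read off from $\eta$ is realised along a sequence of differentiability points of $S$. I would build $\{\alpha_k\}\subset D_S$ with $\alpha_k\to\alpha$ such that the lower-level solutions $u_k=S(\alpha_k)$ have empty biactive set (hence also empty triactive and zero-inactive sets), with $\Ical(\alpha_k,u_k)\supset\Ical\cup\Bcal_2$ and $\Acal_s(\alpha_k,u_k)\supset\Acal_s\cup\Bcal_1$ for $k$ large; concretely one perturbs $\alpha$ so that each index of $\Bcal_1$ is pushed into the strongly active regime and each index of $\Bcal_2$ into the inactive one, using the difference quotients $\frac{u^t-u}{t}\to\eta$ from the proof of \Cref{thm: directional derivative} to control the sign of $(\Kbb u^t)_j$ on $\Bcal_2$ and of the slack $\alpha_j^t-\|q_j^t\|$ on $\Bcal_1$. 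At such $\alpha_k$ the solution operator is Fréchet differentiable (the Fréchet differentiability result for the empty-biactive-set case, cf.\ \cref{eq:sd_linearized_system}), and passing to the limit in that linearised system exactly as in the proof of \Cref{teo: G is a solution of the linear system} --- boundedness of the multiplier sequence, nonnegativity and uniform boundedness of the $\scalar{(\zeta_k)_j}{(\Kbb\eta_k)_j}$-terms, and vanishing of the contributions indexed by $\Bcal_2$ --- gives $S'(\alpha_k)\to G$, so that $G\in\partial_B S(\alpha)$ and $S'(\alpha)h=Gh$. The main obstacle is precisely this last step: one must prove that the collinearity pattern of $\eta$ on the biactive set is attainable by a sequence of parameters at which $S$ is differentiable, which is a delicate sensitivity argument describing how individual biactive indices resolve into the strongly-active or inactive regime under perturbations of $\alpha$.
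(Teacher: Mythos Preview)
Your first two paragraphs are essentially the paper's proof: define $\Bcal_1:=\{j\in\Bcal:(\Kbb\eta)_j=0\}$ and $\Bcal_2:=\Bcal\setminus\Bcal_1$ from the directional derivative $\eta$, observe that $\eta\in V$, and test \cref{eq:directional_derivative_vi} with $v=\eta\pm t w$ for $w\in V$ and $t$ small to turn the variational inequality into the variational equality \cref{eq:G_linear_system_1}. That is exactly what the paper does, and that is where the paper stops: it declares $G$ to be the linear solution map of system \cref{eq:G_linear_system} for this particular partition and concludes $S'(\alpha;h)=Gh$.

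Your third paragraph goes beyond the paper. The paper does \emph{not} verify that this $G$ actually lies in $\partial_B S(\alpha)$; it only shows that, for the given direction $h$, the directional derivative is the output of the linear system associated with \emph{some} partition of $\Bcal$. In other words, the ``linearized element'' in the statement is interpreted as a solution of \cref{eq:G_linear_system} for a partition depending on $h$, not as $Gh$ with $G\in\partial_B S(\alpha)$. Your attempt to construct $\{\alpha_k\}\subset D_S$ realising the pattern $(\Bcal_1,\Bcal_2)$ is a natural reading of the statement, but it is extra work not carried out in the paper, and the difficulty you flag (forcing indices in $\Bcal_1$ into the strongly active regime under a perturbation of $\alpha$ alone) is real and not addressed there. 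For the paper's purposes --- having a linear representative of the directional derivative along each direction for the trust-region algorithm --- the weaker conclusion suffices, so your first two paragraphs already cover what is proved.
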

\begin{proof}
  Let us recall that, since by assumption $\mathcal T \cup \mathcal I_0 = \emptyset$, the directional derivative of the solution mapping, in direction $h$, is given by the unique $\eta\in\Ccal(\alpha,u)$ solution of
  \begin{multline} \label{eq: directional derivative representation}
    \scalar{\bm{\phi}''(u)\eta}{v-\eta} +\sum_{j\in\Ical} \scalar{\alpha_jT_j(\Kbb\eta)_j}{(\Kbb v)_j-(\Kbb\eta)_j}\ge\\ -\sum_{j\in\Ical}h_j\bigscalar{\frac{(\Kbb u)_j}{\|(\Kbb u)_j\|}}{(\Kbb v)_j-(\Kbb\eta)_j} - \sum_{j\in\Bcal} \frac{h_j}{\alpha_j} \scalar{q_j}{(\Kbb v)_j - (\Kbb \eta)_j},
  \end{multline}
  for all $v\in\Ccal(u)$. Considering the sets $\Bcal_1:=\{j\in\Bcal:\;(\Kbb\eta)_j = 0\}$ and $\Bcal_2:=\Bcal\backslash\Bcal_1$, and since $\eta\in\Ccal(u)$, it also follows that $(\Kbb \eta)=c_j q_j$, for all $j\in\Bcal_2$, for some $c_j > 0$. Consequently $\eta$ belongs to the subspace
  \begin{equation*}
    V := \{v\in\R^n:\;(\Kbb v)_j = 0,\;\forall j\in\Acal_s\cup\Bcal_1;\;(\Kbb v)_j\in \spann(q_j),\;\forall j\in\Bcal_2\}.
  \end{equation*}
Moreover, for any $w \in V$ it follows that, for $t$ sufficiently small, $\eta \pm t w \in \Ccal(u)$. Testing \eqref{eq: directional derivative representation} with these vectors we then get that
\begin{equation*}
  \scalar{\bm{\phi}''(u)\eta}{w} +\sum_{j\in\Ical} \scalar{\alpha_jT_j(\Kbb\eta)_j}{(\Kbb w)_j}= - \sum_{j\in \Ical \cup \Bcal_2} \frac{h_j}{\alpha_j} \scalar{q_j}{(\Kbb w)_j}, \quad \forall w \in V,
\end{equation*}
and, consequently, the directional derivative takes the form $\eta = Gh$, solution of \cref{eq:G_linear_system}, with $\Bcal_2$ as defined above.
\end{proof}

\section{Trust Region Algorithm}\label{sec:tr}
In this section we will describe the numerical algorithm used for finding optimal parameters of \cref{eq:bilevel_problem}. Thanks to the Bouligand subdifferential characterization given in \cref{sec:bouligand_subdifferential}, it is possible to compute a linearized representative of the directional derivative via the linear system \cref{eq:G_linear_system}. Using this information we can make use of a descent-like algorithm for its numerical solution. Indeed, by using the uniqueness properties of the solution operator, we can write a \textit{reduced optimization problem}
\begin{mini}
{\substack{\alpha \ge 0}}{j(u(\alpha))}{}{}
\end{mini}
where $u(\alpha)$ is the image reconstruction corresponding to a particular value of $\alpha$. With this reduced problem, we can make use of the stationarity condition for the bilevel problem described in \cref{eq:dir_derivative_reduced_cost} and the directional derivative characterization \cref{eq:sd_linearized_system}. By using the definition of the directional derivative for the reduced optimization problem, we get
\begin{equation}\label{eq:generalized_gradient_rep}
    \scalar{j'(\alpha)}{h} = \scalar{\nabla J(u)}{S'(\alpha;h)} = \scalar{\nabla J(u)}{\tilde{\eta}}.
\end{equation}
where $\tilde{\eta}$ is a solution of system \cref{eq:G_linear_system} for a particular partition of the biactive set $\Bcal = \Bcal_1\cup\Bcal_2$. Let us now define a \emph{generalized adjoint} $p\in\R^m$ as the solution of the following system
\begin{align*}
    \scalar{\phi''(u)^\top p}{v} + \sum_{j\in\Ical}\scalar{\mu_j}{(\Kbb v)_j} - \scalar{\nabla J(u)}{v} &= 0,&\forall v\in V,\\
    \mu_j - \alpha_jT_j(\Kbb p)_j &= 0,&\forall j\in\Ical,
\end{align*}
where $V$ is defined as in \Cref{teo: G is a solution of the linear system}. Moreover, using the results in \Cref{teo:exist_linearized_element}, we know that $\tilde{\eta}\in V$ is a linear representative of the directional derivative. Consequently, \cref{eq:generalized_gradient_rep} reads
\begin{equation*}
    \scalar{j'(\alpha)}{h} = \scalar{\nabla J(u)}{\tilde{\eta}} = \scalar{\phi''(u)^\top p}{\tilde{\eta}} + \sum_{j\in\Ical}\scalar{\alpha_j T_j(\Kbb p)_j}{(\Kbb\tilde{\eta})_j},
\end{equation*}
Rearranging the terms we get
\begin{equation*}
    \scalar{j'(\alpha)}{h} = \scalar{p}{\phi''(u)\tilde{\eta}} + \sum_{j\in\Ical}\scalar{(\Kbb p)_j}{\alpha_j T_j(\Kbb\tilde{\eta})_j} = \scalar{p}{\phi''(u)\tilde{\eta}} + \sum_{j\in\Ical}\scalar{(\Kbb p)_j}{\tilde{\lambda}_j},
\end{equation*}
Finally, using \cref{eq:sd_linearized_system} we get
\begin{equation}
    \scalar{j'(\alpha)}{h}= -\sum_{j\in\Ical\cup\Bcal_2}\frac{h_j}{\alpha_j}\scalar{q_j}{(\Kbb p)_j}\label{eq:b_subgrad_equation}
\end{equation}


In this work, we will rely on a nonsmooth trust region method in the spirit of \cite{christof2017non}. In general, a trust-region algorithm works by defining for an iteration $k$ a radius $\Delta_k$ and replaces the function with a \textit{model function} $m_k(\alpha_k)$ within a \textit{trust-region}. This region in this paper will be represented by a $l_\infty$ ball, and the radius $\Delta_k$ is updated according to a \textit{quality measure} based on the predicted decrease of the model function \textit{pred} and the actual cost function reduction \textit{ared}.

The model function we will be using has the following form
\begin{equation*}
    m_k(\alpha_k) = j(\alpha_k) + g_k^\top d_k + \frac{1}{2}d_k^\top B_k d_k,
\end{equation*}
where $B_k$ is a second order matrix built using BFGS updates, and $g_k$ is obtained either by using the Bouligand subdifferential element obtained using \cref{eq:b_subgrad_equation} or a gradient obtained from the regularized model, $g_{\gamma,k}$, if the radius falls below a threshold value, see \Cref{subsec:local_regularization}. The idea of differentiating two phases for the trust region subproblem approximation was taken from \cite{christof2017non}, where this strategy is presented for optimizing nonsmooth, non-convex and locally-Lipschitz functions. By defining an appropriate model function, dependent on the radius value, convergence to a Clarke (C)-stationary points may be verified \cite[Proposition 2.10]{christof2017non}. The complete steps are provided in \cref{algo:tr_algorithm}.

\begin{algorithm}
    \caption{Non-smooth Trust-Region Algorithm}\label{algo:tr_algorithm}
    \begin{algorithmic}[1]
        \STATE{Choose initial parameter $\alpha_0$, radius $\Delta_0$, $0<\eta_1\le\eta_2<1$, $0<\gamma_1\le\gamma_2<1$ and $tol>0$}
        \STATE{Choose initial second order matrix $B_0$ and a threshold radius $\Delta_t$}
        \STATE{$k=0$}
        \WHILE{$\Delta_k > tol$}
        \IF{$\Delta_k>=\Delta_t$}
            \STATE{$m_k(\alpha_k) = j(\alpha_k) + g_k^\top d_k + \frac{1}{2}d_k^\top B_kd_k$}
        \ELSE
            \STATE{$m_k(\alpha_k) = j(\alpha_k) + g_{\gamma,k}^\top d_k + \frac{1}{2}d_k^\top B_kd_k$}
        \ENDIF
        \STATE{Compute a step $s_k$ that ``sufficiently'' reduces the model $m_k$ such that $\alpha_k+s_k\in B_{\Delta_k}$\label{algo:step_calculation}}
        \STATE{Update second order matrix $B_k$ using limited memory BFGS.}
        \STATE{Compute the quality measure $\rho_k$}
        \STATE{$\alpha_{k+1} = \begin{cases}
            \alpha_k &\text{if }\rho_k\le\eta_1,\\
            \alpha_k+s_k &\text{otherwise}.
            \end{cases}$}
        \STATE{$\Delta_{k+1} = \begin{cases}
                [\Delta_k,\infty)&\text{if }\rho_k\ge\eta_2,\\
                [\gamma_2\Delta_k,\Delta_k]&\text{if }\rho_k\in(\eta_1,\eta_2),\\
                [\gamma_1\Delta_k,\gamma_2\Delta_k]&\text{if }\rho_k<\eta_1.
            \end{cases}$}
        \STATE{$k \leftarrow k+1$}
        \ENDWHILE
        \RETURN $\alpha_k$
    \end{algorithmic}
\end{algorithm}

\subsection{Trust Region Subproblem}

Regarding step \ref{algo:step_calculation} in \cref{algo:tr_algorithm}, we need to approximate the solution of the following trust region sub-problem
\begin{mini}
    {\substack{s\in\R^n}}{j(\alpha)+g^\top s + \frac{1}{2}s^\top Bs}{}{}
    \addConstraint{\|s\|_{\infty}\le \Delta}
    \addConstraint{s+\alpha\ge 0},
\end{mini}
which corresponds to a classical trust-region sub-problem with additional positivity constraints (see, e.g., \cite{voglis2004rectangular,xu2007astral}). The main idea is to reformulate the problem taking advantage of the $l_\infty$ norm used for the ball at the point $x_k$
\begin{mini}
    {\substack{s\in\R^n}}{j(\alpha)+g^\top s + \frac{1}{2}s^\top Bs}{}{}
    \addConstraint{\max(-\alpha_j,-\Delta) \le s_j \le \Delta,\;\forall j=1,\dots,n}.
\end{mini}
For performance purposes it is desirable to solve this problem approximately in such a way that we can guarantee a descent on the cost function. With that goal in mind we will make use of a dogleg strategy that takes into account a Newton step $s_N$ and a Cauchy step $s_C$. In the context of this constrained problem, let us take $\tilde{B}_\Delta = B_\Delta\cap\R^n_+$ and distinguish the following three cases
\begin{enumerate}
    \item $s_N\in \tilde{B}_\Delta$,
    \item $s_C\in \tilde{B}_\Delta$ and $s_N\notin \tilde{B}_\Delta$,
    \item $s_C\notin \tilde{B}_\Delta$ and $s_N\notin \tilde{B}_\Delta$.
\end{enumerate}
For case 1  we take the Newton step; for case 2 a dogleg strategy for box constraints is used; for case 3 we make use of a scaled Cauchy direction as described in \cref{algo:projected_cauchy_algorithm}.
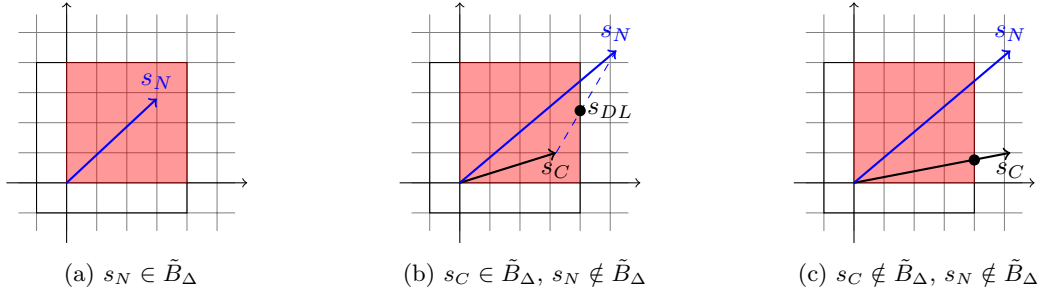
\begin{figure}
    \centering
    \begin{subfigure}{0.31\textwidth}
        \centering
        \begin{tikzpicture}[scale=0.8]
    \draw[step=0.5cm,gray,very thin] (-1.8,-1.8)grid(1.8,1.8);
    \draw[->](-2.0,-1.0)-- (2.0,-1.0);
    \draw[->](-1.0,-2.0)-- (-1.0,2.0);
    \draw(-1.5,-1.5)rectangle(1.0,1.0);
    \fill[color=red,opacity=0.4] (-1.0,-1.0) rectangle (1.0,1.0);
    \draw[->,color=blue,thick](-1.0,-1.0)--(0.5,0.4) node[above] {$s_N$};
    \draw[color=black](1.0,0);
\end{tikzpicture}
        \caption{$s_N\in \tilde{B}_\Delta$}
        \label{fig:sub-first}
    \end{subfigure}
    \begin{subfigure}{0.31\textwidth}
        \centering
        \begin{tikzpicture}[scale=0.8]
    \draw[step=0.5cm,gray,very thin] (-1.8,-1.8)grid(1.8,1.8);
    \draw[->](-2.0,-1.0)-- (2.0,-1.0);
    \draw[->](-1.0,-2.0)-- (-1.0,2.0);
    \draw(-1.5,-1.5)rectangle(1.0,1.0);
    \fill[color=red,opacity=0.4] (-1.0,-1.0) rectangle (1.0,1.0);
    \draw[->,color=black,thick](-1.0,-1.0)--(0.6,-0.5) node[below] {$s_C$};
    \draw[->,color=blue,thick](-1.0,-1.0)--(1.6,1.2) node[above] {$s_N$};
    \draw[color=blue,dashed](0.6,-0.5)--(1.6,1.2);
    \node at (1.0,0.2)[circle,fill,inner sep=1.5pt]{};
    \node at (1.5,0.2){$s_{DL}$};
\end{tikzpicture}
        \caption{$s_C\in \tilde{B}_\Delta$, $s_N\notin \tilde{B}_\Delta$}
        \label{fig:sub-second}
    \end{subfigure}
    \begin{subfigure}{0.31\textwidth}
        \centering
        \begin{tikzpicture}[scale=0.8]
    \draw[step=0.5cm,gray,very thin] (-1.8,-1.8)grid(1.8,1.8);
    \draw[->](-2.0,-1.0)-- (2.0,-1.0);
    \draw[->](-1.0,-2.0)-- (-1.0,2.0);
    \draw(-1.5,-1.5)rectangle(1.0,1.0);
    \fill[color=red,opacity=0.4] (-1.0,-1.0) rectangle (1.0,1.0);
    \draw[->,color=black,thick](-1.0,-1.0)--(1.6,-0.5) node[below] {$s_C$};
    \draw[->,color=blue,thick](-1.0,-1.0)--(1.6,1.2) node[above] {$s_N$};
    \node at (1.0,-0.62)[circle,fill,inner sep=1.5pt]{};
\end{tikzpicture}
        \caption{$s_C\notin \tilde{B}_\Delta$, $s_N\notin \tilde{B}_\Delta$}
        \label{fig:sub-third}
    \end{subfigure}
    \caption{Projected Cauchy Step:
    {\scriptsize The figure depicts a two dimensional example for the three different possible cases when approximating the trust region subproblem using a dogleg strategy with $l_\infty$ norm and positivity constraints.}}
    \label{fig:step_selection}
\end{figure}
\begin{algorithm}
    \caption{Dogleg Step for Box Constraints}\label{algo:projected_cauchy_algorithm}
    \begin{algorithmic}[1]
        \STATE{Calculate Newton step by solving the linear system $B_k s_N=-g_k$}.
        \IF{$s_N \in \tilde{B}_\Delta$}
        \RETURN $s_N$
        \ENDIF
        \STATE{Calculate $s_C = -\frac{\|g_k\|^2}{g_k^\top*B_k*g_k}g_k$}
        \IF{$s_C \in \tilde{B}_\Delta$}
        \STATE{Calculate the intersection of $s_N-s_C$ with $\tilde{B}_\Delta$ and obtain $s_{DL}$}
        \RETURN $s_{DL}$
        \ENDIF
        \STATE{Find $t$ such that $t*s_C/\|s_C\|$ remains in $\tilde{B}_\Delta$.}
        \RETURN $t*\frac{s_{C}}{\|s_C\|}$
    \end{algorithmic}
\end{algorithm}

\subsection{Local Regularization}\label{subsec:local_regularization}
As a safeguard, for the cases where the trust-region radius fall below a threshold value $\Delta_t$ we will rely on a switching strategy between the nonsmooth model presented earlier and a regularized model. Let us consider the following optimization problem
\begin{mini}
    {\substack{\alpha\in\R^n_+}}{J(\bar{u}(\alpha),u^{train})}{\label{eq:regularized_bilevel_problem}}{}
    \addConstraint{\bar{u}(\alpha) = \argmin_u \phi(u) + \sum_{j=1}^n \alpha_j\|(\Kbb u)_j\|_\gamma}
\end{mini}
where $\|\cdot\|_\gamma$ is a local regularization of the euclidean norm given by
\begin{equation*}
    \|z\|_\gamma = \begin{cases}
        \|z\| - \frac{1}{2\gamma} &\text{if }\|z\| \ge \frac{1}{\gamma},\\
        \frac{\gamma}{2}\|z\|^2 &\text{if } \|z\| < \frac{1}{\gamma}.
    \end{cases}
\end{equation*}
Since the problem is now differentiable, we can define the following optimality condition for the lower level problem is obtained
\begin{equation*}
    \scalar{\phi'(u)}{v} + \sum_{j=1}^n\alpha_j\scalar{h_\gamma ((\Kbb u)_j)}{(\Kbb v)_j} = 0,\;\forall v\in\R^m.
\end{equation*}
Here, $h_\gamma$ correspond to the first derivative of the regularized euclidean norm. Moreover, regarding the bilevel problem, it is possible to formulate the KKT conditions for this problem. Let $(\alpha^*,u^*)\in\R^n_+\times\R^m$ be a stationary point for the regularized bilevel problem \cref{eq:regularized_bilevel_problem}. Then there exist Lagrange multipliers $(p,\sigma)\in\R^m\times\R^n$ such that the following optimality system holds true
\begin{align}
    \scalar{\phi'(u)}{v} + \sum_{j=1}^n\alpha_j^*\scalar{h_\gamma ((\Kbb u^*)_j)}{(\Kbb v)_j} &= 0,&\;\forall v\in\R^m,\\
    \scalar{\phi''(u^*)^\top p}{v} + \sum_{j=1}^n\alpha_j\scalar{h'^*_\gamma ((\Kbb u^*)_j)(\Kbb p)_j}{(\Kbb v)_j} &= -\scalar{J'(u)}{v},&\;\forall v\in\R^m,\label{eq:regularized_adjoint}\\
    \scalar{h_\gamma((\Kbb u^*)_j)}{(\Kbb p)_j} &= \sigma_j,&\;\forall j=1,\dots,n,\\
    0\le \sigma \perp \alpha^* \ge 0.
\end{align}
Moreover, with help of the adjoint equation \cref{eq:regularized_adjoint} it is possible to derive a gradient formula for the reduced cost function $j(\alpha) = J(u_\gamma(\alpha))$ as
\begin{equation*}
    (j'(\alpha))_j = \scalar{h_\gamma((\Kbb u)_j)}{(\Kbb p)_j}.
\end{equation*}
According to \cref{algo:tr_algorithm}, if the value for the radius runs below a predetermined threshold, the algorithm will make use of this gradient for its calculations.

\section{Experiments}\label{sec:experiments}
In this section we report on the performance of \cref{algo:tr_algorithm} presented in \Cref{sec:tr} to find optimal parameters. With this goal in mind we prepared two training image datasets. 
\subsection{Single Training Pair}
The first dataset we will explore is a single 128 by 128 pixel image pair dataset based on the cameraman image and a corrupted version, obtained by adding gaussian noise with zero mean and standard deviation $\sigma=0.05$. \Cref{fig:cameraman_dataset} shows this training image pair along with the optimal parameter obtained using the trust-region algorithm for both a scalar and a two-dimensional patch parameter. The improvement when using a two-dimensional patch parameter, according to the SSIM value of the image reconstructions, in \Cref{fig:cost_cameraman}. Moreover, the cost function when using a scalar and a two dimensional parameter is shown, respectively. These plots show the non-convexity of the cost function, and the cost corresponding to the optimal parameter.

\begin{figure}
    \captionsetup[subfigure]{labelformat=empty}
    \centering
    \begin{subfigure}{0.18\textwidth}
        \centering
        \includegraphics[width=0.9\textwidth]{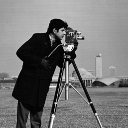}
        \caption{\scriptsize Original\\.}
    \end{subfigure}
    \begin{subfigure}{0.18\textwidth}
        \centering
        \includegraphics[width=0.9\textwidth]{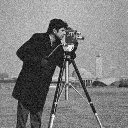}
        \caption{\scriptsize Noisy\\SSIM=0.6091}
    \end{subfigure}
    \begin{subfigure}{0.18\textwidth}
        \centering
        \includegraphics[width=0.9\textwidth]{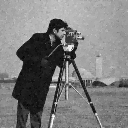}
        \caption{\scriptsize$\alpha^*=0.0155$\\SSIM=0.7711}
    \end{subfigure}
    \begin{subfigure}{0.18\textwidth}
        \centering
        \begin{tikzpicture}
            \node (image) at (0,0) {
                \includegraphics[width=0.9\textwidth]{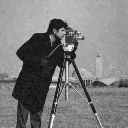}
            };
            \draw[green,very thick] (-1.35,0.0) rectangle (1.35,1.35) node[below left,black,fill=green]{$\alpha_1$};
            \draw[green,very thick] (-1.35,-1.35) rectangle (1.35,0.0) node[below left,black,fill=green]{$\alpha_2$};
        \end{tikzpicture}
        \caption{\scriptsize$\alpha^*=[0.0233; 0.0126]$\\ SSIM=0.8007}
    \end{subfigure}
    \caption{Cameraman Dataset\\
    \scriptsize
    Optimal reconstructions using a scalar regularization parameter and a 2 dimensional regularization parameter.}
    \label{fig:cameraman_dataset}
\end{figure}

\begin{figure}
    \captionsetup[subfigure]{labelformat=empty}
    \centering
    \begin{subfigure}{0.35\textwidth}
        \centering
        \begin{tikzpicture}[,
scale=0.6]
\begin{axis}[
  ylabel = {$\|u-\bar{u}\|^2$},
  title = {Scalar Cost},
  xlabel = {$\alpha$},
  grid=both,
  xmode = {log},
  ymode = {log}
]

\addplot+[
  mark = {none}
] coordinates {
  (0.0001, 39.35183975503215)
  (0.0011, 38.61021138087902)
  (0.0021, 37.93428758113962)
  (0.0031, 37.321827768821386)
  (0.0041, 36.770308396183665)
  (0.0051, 36.27740041877734)
  (0.0061, 35.841343072948135)
  (0.0071, 35.45993597402602)
  (0.0081, 35.13110783608035)
  (0.0091, 34.85182917866388)
  (0.0101, 34.62004168799655)
  (0.0111, 34.43370615295921)
  (0.0121, 34.28945274615154)
  (0.0131, 34.18524996336351)
  (0.0141, 34.11908724031764)
  (0.0151, 34.08880387637571)
  (0.0161, 34.091926255302525)
  (0.0171, 34.12585117144701)
  (0.0181, 34.18902190257802)
  (0.0191, 34.27882266167631)
  (0.0201, 34.394406752816295)
  (0.0211, 34.533629553177335)
  (0.0221, 34.69439469436213)
  (0.0231, 34.87535386728961)
  (0.0241, 35.07450094253086)
  (0.0251, 35.290041608639974)
  (0.0261, 35.52051014867768)
  (0.0271, 35.763836123594764)
  (0.0281, 36.018324573043095)
  (0.0291, 36.28312543397011)
  (0.0301, 36.55739802264843)
  (0.0311, 36.840579664644316)
  (0.0321, 37.1313855927374)
  (0.0331, 37.42844585362433)
  (0.0341, 37.73050429715157)
  (0.0351, 38.03636862038071)
  (0.0361, 38.34646353636534)
  (0.0371, 38.66051153924266)
  (0.0381, 38.97731738961112)
  (0.0391, 39.295656736172695)
  (0.0401, 39.6156940119174)
  (0.0411, 39.93668411208651)
  (0.0421, 40.25887812288611)
  (0.0431, 40.58228912841855)
  (0.0441, 40.9068439756975)
  (0.0451, 41.231291187602686)
  (0.0461, 41.55563507443086)
  (0.0471, 41.880169238241386)
  (0.0481, 42.204282612403496)
  (0.0491, 42.52886648762474)
  (0.0501, 42.85414556695718)
  (0.1501, 72.35345510365228)
  (0.2501, 93.65906927579985)
  (0.3501, 113.75760369909105)
  (0.4501, 128.56465411203482)
  (0.5501, 137.91774682825098)
  (0.6501, 145.05185925193064)
  (0.7501, 151.21353843916958)
  (0.8501, 157.5868450732938)
  (0.9501, 164.1345715697258)
  (1.0501, 170.85766959617274)
  (1.1501, 177.7463009408699)
  (1.2501, 184.78468107153773)
  (1.3501, 191.95235137571663)
  (1.4501, 199.19190695132622)
  (1.5501, 206.44555870470285)
  (1.6501, 213.77032313244592)
  (1.7501, 221.24064518896003)
  (1.8501, 228.83239296080527)
  (1.9501, 236.57170580666835)
  (2.0501, 244.50925582590975)
  (2.1501, 252.63012099461247)
  (2.2501, 260.93770055552426)
  (2.3501, 269.4273661714143)
  (2.4501, 278.08058983079025)
  (2.5501, 286.736376617044)
  (2.6501, 295.5807222389587)
  (2.7501, 304.56752510777625)
  (2.8501, 313.60660161830697)
  (2.9501, 322.55679244391325)
  (3.0501, 329.4002844612731)
  (3.1501, 336.10285810602255)
  (3.2501, 342.7257853477919)
  (3.3501, 349.38323085812175)
  (3.4501, 356.0232338165534)
  (3.5501, 362.43176103726506)
  (3.6501, 368.5694917132454)
  (3.7501, 374.62079163847557)
  (3.8501, 380.5303804205056)
  (3.9501, 386.28721056540616)
  (4.0501, 391.88746782910164)
  (4.1501, 397.37075846455764)
  (4.2501, 402.44621466485455)
  (4.3501, 407.4419630194092)
  (4.4501, 412.3739763354854)
  (4.5501, 417.15459483027905)
  (4.6501, 421.8512235997132)
  (4.7501, 426.47873480438517)
  (4.8501, 431.0528751598738)
  (4.9501, 435.57128504034193)
  (5.01, 438.2527372705775)
  (6.01, 482.4630189997896)
  (7.01, 518.9873313663533)
  (8.01, 517.7780773429049)
  (9.01, 512.7531644121789)
  (10.01, 509.7547639723466)
  (11.01, 507.9872647853066)
  (12.01, 507.20899733403564)
  (13.01, 507.0167777657284)
  (14.01, 507.01432136923995)
  (15.01, 507.01432136923995)
  (16.01, 507.01432136923995)
  (17.01, 507.01432136923995)
  (18.01, 507.01432136923995)
  (19.01, 507.01432136923995)
  (20.01, 507.01432136923995)
  (21.01, 507.01432136923995)
  (22.01, 507.01432136923995)
  (23.01, 507.01432136923995)
  (24.01, 507.01432136923995)
  (25.01, 507.01432136923995)
  (26.01, 507.01432136923995)
  (27.01, 507.01432136923995)
  (28.01, 507.01432136923995)
  (29.01, 507.01432136923995)
  (30.01, 507.01432136923995)
  (31.01, 507.01432136923995)
  (32.01, 507.01432136923995)
  (33.01, 507.01432136923995)
  (34.01, 507.01432136923995)
  (35.01, 507.01432136923995)
  (36.01, 507.01432136923995)
  (37.01, 507.01432136923995)
  (38.01, 507.01432136923995)
  (39.01, 507.01432136923995)
  (40.01, 507.01432136923995)
  (41.01, 507.01432136923995)
  (42.01, 507.01432136923995)
  (43.01, 507.01432136923995)
  (44.01, 507.01432136923995)
  (45.01, 507.01432136923995)
  (46.01, 507.01432136923995)
  (47.01, 507.01432136923995)
  (48.01, 507.01432136923995)
  (49.01, 507.01432136923995)
};

\addplot+[
] coordinates {
  (0.015497558593750004, 34.1077050130809)
};

\end{axis}
\end{tikzpicture}
    \end{subfigure}
    \begin{subfigure}{0.35\textwidth}
        \centering
        \input{experiments/cameraman_128_5/cameraman_128_5_cost_plot_2d.tex}
    \end{subfigure}
    \caption{Cost Function - Cameraman Dataset\\
    \scriptsize
    Values for the $l_2$ squared cost function using a scalar regularization parameter and a two dimensional regularization parameter using the Cameraman single image dataset.}
    \label{fig:cost_cameraman}
\end{figure}

In order to explore the behavior of the algorithm with more degrees of freedom we will make use of a so called \textit{patch-based} parameter. This parameter will be a piecewise constant parameter. We split the image size into a grid of different size according to the number of patches, i.e., the patch size will be smaller as we require more patches within the image domain. For instance a 4x4 patch will calculate an optimal parameter of size 16.

Regarding the behavior of the algorithm, \Cref{table:cameraman_table} shows the performance on the cameraman training dataset when using different number of patches, along with the number of iterations, cost, quality measures and residue. Indeed, the improvement on the reconstructed images when using more patches can be verified from the data. Furthermore, \Cref{fig:patch_cameraman} shows the optimal reconstructions along with the optimal parameter for different number of patches. In this experiment it can be seen how the regularization parameter can adjust different regularization levels according to the training pair.


\begin{table}
    \centering
    \begin{tabular}{@{}lccccr@{}} \toprule
        \multicolumn{2}{c}{Item}&\multicolumn{2}{c}{}&\multicolumn{2}{c}{Reconstruction} \\ \cmidrule(r){5-6}
        Patch & Iterations & $\|\alpha_{k+1}-\alpha_k\|$ & COST & SSIM & PSNR \\ \midrule
        Scalar & 19 & 7.629e-7 & 34.1206 & 0.7712 & 26.3794 \\
        2x1 & 20 & 4.883e-5 & 33.6915  & 0.8007 & 25.6222 \\
        2x2 & 19 & 4.639e-5 & 33.6625  & 0.8034 & 25.8982 \\
        4x4 & 22 & 1.461e-4 & 33.0175 & 0.8189 & 25.8303 \\
        8x8 & 28 & 5.614e-5 & 32.2040 & 0.8283 & 24.4066 \\
        16x16 & 36 & 9.119e-4 & 31.6976 & 0.8455 & 25.8264 \\\bottomrule
    \end{tabular}
    \caption{Trust Region Algorithm behavior on the Cameraman dataset.}
    \label{table:cameraman_table}
\end{table}

\begin{figure}
    \captionsetup[subfigure]{labelformat=empty}
    \centering
    \begin{subfigure}{0.18\textwidth}
        \centering
        \includegraphics[width=0.99\textwidth]{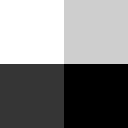}
        \caption{2x2}
    \end{subfigure}
    \begin{subfigure}{0.18\textwidth}
        \centering
        \includegraphics[width=0.99\textwidth]{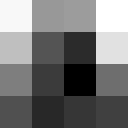}
        \caption{4x4}
    \end{subfigure}
    \begin{subfigure}{0.18\textwidth}
        \centering
        \includegraphics[width=0.99\textwidth]{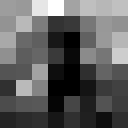}
        \caption{8x8}
    \end{subfigure}
    \begin{subfigure}{0.18\textwidth}
        \centering
        \includegraphics[width=0.99\textwidth]{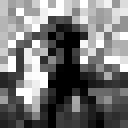}
        \caption{16x16}
    \end{subfigure}

    \begin{subfigure}{0.18\textwidth}
        \centering
        \includegraphics[width=0.99\textwidth]{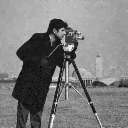}
        \caption{SSIM=0.8034}
    \end{subfigure}
    \begin{subfigure}{0.18\textwidth}
        \centering
        \includegraphics[width=0.99\textwidth]{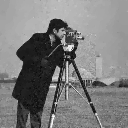}
        \caption{SSIM=0.8189}
    \end{subfigure}
    \begin{subfigure}{0.18\textwidth}
        \centering
        \includegraphics[width=0.99\textwidth]{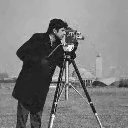}
        \caption{SSIM=0.8283}
    \end{subfigure}
    \begin{subfigure}{0.18\textwidth}
        \centering
        \includegraphics[width=0.99\textwidth]{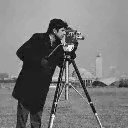}
        \caption{SSIM=0.8455}
    \end{subfigure}
    \caption{Optimal Patch Parameter - Cameraman Dataset\\
    \scriptsize
    Values for the $l_2$ squared cost function using a scalar regularization parameter and a two dimensional regularization parameter using the Cameraman dataset.}
    \label{fig:patch_cameraman}
\end{figure}

\subsection{Multiple Training Pairs}
For the second experiment, we used ten image pairs containing images of faces to generate a training dataset and ten different image pairs to generate a testing dataset; both datasets were based on the CelebA dataset \cite{liu2015faceattributes}. These images are of size 128 by 128 pixels and in both datasets, the degenerated pairs were generated by adding gaussian noise with zero-mean and standard deviation $\sigma = 0.1$. A subset of the training dataset is depicted in \Cref{fig:faces_dataset}.

\begin{figure}
    \captionsetup[subfigure]{labelformat=empty}
    \centering
    \begin{subfigure}{0.115\textwidth}
        \centering
        \includegraphics[width=0.99\textwidth]{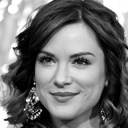}
    \end{subfigure}
    \begin{subfigure}{0.115\textwidth}
        \centering
        \includegraphics[width=0.99\textwidth]{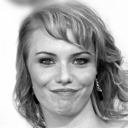}
    \end{subfigure}
    \begin{subfigure}{0.115\textwidth}
        \centering
        \includegraphics[width=0.99\textwidth]{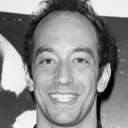}
    \end{subfigure}
    \begin{subfigure}{0.115\textwidth}
        \centering
        \includegraphics[width=0.99\textwidth]{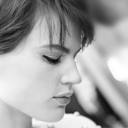}
    \end{subfigure}
    \begin{subfigure}{0.115\textwidth}
        \centering
        \includegraphics[width=0.99\textwidth]{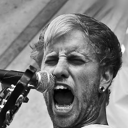}
    \end{subfigure}
    \begin{subfigure}{0.115\textwidth}
        \centering
        \includegraphics[width=0.99\textwidth]{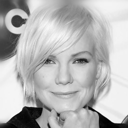}
    \end{subfigure}
    \begin{subfigure}{0.115\textwidth}
        \centering
        \includegraphics[width=0.99\textwidth]{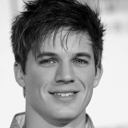}
    \end{subfigure}
    \begin{subfigure}{0.115\textwidth}
        \centering
        \includegraphics[width=0.99\textwidth]{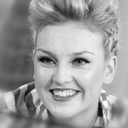}
    \end{subfigure}

    \begin{subfigure}{0.115\textwidth}
        \centering
        \includegraphics[width=0.99\textwidth]{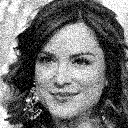}
    \end{subfigure}
    \begin{subfigure}{0.115\textwidth}
        \centering
        \includegraphics[width=0.99\textwidth]{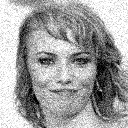}
    \end{subfigure}
    \begin{subfigure}{0.115\textwidth}
        \centering
        \includegraphics[width=0.99\textwidth]{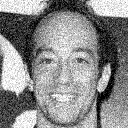}
    \end{subfigure}
    \begin{subfigure}{0.115\textwidth}
        \centering
        \includegraphics[width=0.99\textwidth]{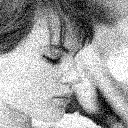}
    \end{subfigure}
    \begin{subfigure}{0.115\textwidth}
        \centering
        \includegraphics[width=0.99\textwidth]{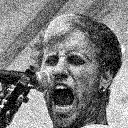}
    \end{subfigure}
    \begin{subfigure}{0.115\textwidth}
        \centering
        \includegraphics[width=0.99\textwidth]{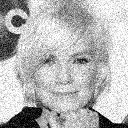}
    \end{subfigure}
    \begin{subfigure}{0.115\textwidth}
        \centering
        \includegraphics[width=0.99\textwidth]{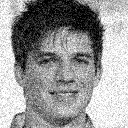}
    \end{subfigure}
    \begin{subfigure}{0.115\textwidth}
        \centering
        \includegraphics[width=0.99\textwidth]{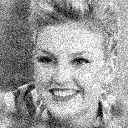}
    \end{subfigure}
    \caption{Faces Dataset\\
    \scriptsize
    A subset of the CelebA dataset corrupted with gaussian noise.}
    \label{fig:faces_dataset}
\end{figure}

In \cref{fig:cost_faces} we plot the cost function corresponding to a scalar parameter and two-dimensional patch parameter along with the cost function corresponding to the optimal value calculated by the algorithm.
It is worth mentioning that when considering a patch-dependent parameter, as it was the case with the single image dataset, a scale dependent structure appears (see \cref{fig:patch_faces}).

\begin{figure}
    \captionsetup[subfigure]{labelformat=empty}
    \centering
    \begin{subfigure}{0.35\textwidth}
        \centering
        \begin{tikzpicture}[scale=0.6]
\begin{axis}[
  ylabel = {$\|u-\bar{u}\|^2$},
  title = {Scalar Cost},
  ymode = {log},
  xlabel = {$\alpha$},
  grid=both,
  xmode = {log}
]

\addplot+[
  mark = {none}
] coordinates {
  (1.0e-5, 753.469343580276)
  (0.01001, 536.6644342600229)
  (0.02001, 381.0343216160397)
  (0.03001, 275.8512313710588)
  (0.04001, 210.14444485031302)
  (0.05001, 173.52152750513756)
  (0.06001, 156.73988687100208)
  (0.07001, 152.46392937555888)
  (0.08001, 155.62508118422144)
  (0.09001, 163.0254559992431)
  (0.11, 184.0827775870888)
  (0.13, 208.99057333050303)
  (0.15, 235.4488174252568)
  (0.17, 262.47719435982197)
  (0.19, 289.9762307565396)
  (0.21, 317.5468878881827)
  (0.23, 344.9762874184793)
  (0.25, 372.03953244615167)
  (0.27, 398.6204858873613)
  (0.29, 424.8022936539642)
  (0.31, 450.7935686229026)
  (0.33, 476.36262691901374)
  (0.35, 501.7655504732943)
  (0.37, 526.9648249192692)
  (0.39, 551.9138652186489)
  (0.41, 576.4234359681707)
  (0.43, 600.5309672692035)
  (0.45, 624.5713000976382)
  (0.47, 648.4936556340472)
  (0.49, 672.1568326447839)
  (0.51, 695.9240418038837)
  (0.53, 719.6530550158086)
  (0.55, 743.1132532355006)
  (0.57, 766.6378341147921)
  (0.59, 790.1335425142015)
  (0.61, 813.6041373643867)
  (0.63, 837.067686235155)
  (0.65, 860.6581251096322)
  (0.67, 884.3099485593881)
  (0.69, 907.9881463569114)
  (0.71, 931.5327134474526)
  (0.73, 954.8994559321992)
  (0.75, 978.2306000153282)
  (0.77, 1001.3211713748867)
  (0.79, 1023.9509909535843)
  (0.81, 1046.3504097046252)
  (0.83, 1068.6023919227202)
  (0.85, 1090.5856601187954)
  (0.87, 1112.3942726793862)
  (0.89, 1134.2488368581767)
  (0.91, 1155.8529397524328)
  (0.93, 1176.8000475544115)
  (0.95, 1197.3924649575013)
  (0.97, 1217.55204957252)
  (0.99, 1237.2945888308288)
  (1.01, 1256.8098303097322)
  (1.03, 1276.3640127633012)
  (1.05, 1295.8858126994699)
  (1.07, 1315.304100922705)
  (1.09, 1334.6268711715904)
  (1.11, 1353.4659411142047)
  (1.13, 1372.3121944330983)
  (1.15, 1391.1892701909649)
  (1.17, 1410.145525556456)
  (1.19, 1428.9343622941935)
  (1.21, 1447.6791015426406)
  (1.23, 1466.485644690028)
  (1.25, 1485.353761340444)
  (1.27, 1504.1844611890294)
  (1.29, 1522.8352783171567)
  (1.31, 1541.3547367205142)
  (1.33, 1559.8084501225567)
  (1.35, 1578.0645663736132)
  (1.37, 1596.46934483546)
  (1.39, 1614.911378748345)
  (1.41, 1633.2852064553836)
  (1.43, 1651.7158054863098)
  (1.45, 1670.2551185315635)
  (1.47, 1688.7637661601539)
  (1.49, 1707.2573746333908)
  (1.51, 1725.6010975338531)
  (1.53, 1743.7110957027226)
  (1.55, 1761.5663033914232)
  (1.57, 1779.5151855266663)
  (1.59, 1797.442353293292)
  (1.61, 1815.0060821392158)
  (1.63, 1832.5897235470622)
  (1.65, 1850.0793647654248)
  (1.67, 1867.3890930960283)
  (1.69, 1884.7228875558708)
  (1.71, 1902.1175754002627)
  (1.73, 1919.5556848541216)
  (1.75, 1937.0029545199127)
  (1.77, 1954.4668222072664)
  (1.79, 1971.9839600292996)
  (1.81, 1989.5792727975002)
  (1.83, 2007.1859035228404)
  (1.85, 2024.3981630359542)
  (1.87, 2041.3689179368048)
  (1.89, 2058.3141324012327)
  (1.91, 2075.1448674800695)
  (1.93, 2091.936737783555)
  (1.95, 2108.744963054511)
  (1.97, 2125.566080328848)
  (1.99, 2142.4594822514528)
  (2.01, 2159.4269976523556)
  (3.01, 2962.482365416407)
  (4.01, 3693.852424681464)
  (5.01, 4217.566685950918)
  (6.01, 4527.555410005075)
  (7.01, 4698.797472690587)
  (8.01, 4835.927029715684)
  (9.01, 4925.157872151898)
  (10.01, 4995.693902240593)
  (11.01, 4999.113813367361)
  (12.01, 4990.759960749009)
  (13.01, 4985.87538493477)
  (14.01, 4983.903065038801)
  (15.01, 4983.058310027304)
  (16.01, 4982.629458276466)
  (17.01, 4982.659160639976)
  (18.01, 4982.75517508182)
  (19.01, 4982.717213635151)
  (20.01, 4982.639686740012)
  (21.01, 4982.588698541161)
  (22.01, 4982.5860720071)
  (23.01, 4982.5860720071)
  (24.01, 4982.5860720071)
  (25.01, 4982.5860720071)
  (26.01, 4982.5860720071)
  (27.01, 4982.5860720071)
  (28.01, 4982.5860720071)
  (29.01, 4982.5860720071)
  (30.01, 4982.5860720071)
  (31.01, 4982.5860720071)
  (32.01, 4982.5860720071)
  (33.01, 4982.5860720071)
  (34.01, 4982.5860720071)
  (35.01, 4982.5860720071)
  (36.01, 4982.5860720071)
  (37.01, 4982.5860720071)
  (38.01, 4982.5860720071)
  (39.01, 4982.5860720071)
  (40.01, 4982.5860720071)
  (41.01, 4982.5860720071)
  (42.01, 4982.5860720071)
  (43.01, 4982.5860720071)
  (44.01, 4982.5860720071)
  (45.01, 4982.5860720071)
  (46.01, 4982.5860720071)
  (47.01, 4982.5860720071)
  (48.01, 4982.5860720071)
  (49.01, 4982.5860720071)
};

\addplot+[
] coordinates {
  (0.07032249999999998, 152.37267449446088)
};

\end{axis}
\end{tikzpicture}
    \end{subfigure}
    \begin{subfigure}{0.35\textwidth}
        \centering
        \input{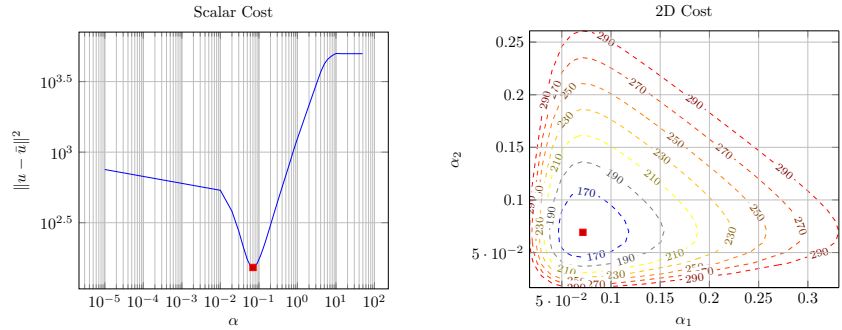}
    \end{subfigure}
    \caption{Cost Function - Faces Dataset\\
    \scriptsize
    Values for the $l_2$ squared cost function using a scalar regularization parameter and a two dimensional regularization parameter using the Faces dataset.}
    \label{fig:cost_faces}
\end{figure}

\begin{figure}
    \captionsetup[subfigure]{labelformat=empty}
    \centering
    \begin{subfigure}{0.18\textwidth}
        \centering
        \includegraphics[width=0.99\textwidth]{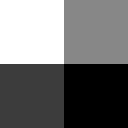}
        \caption{2x2}
    \end{subfigure}
    \begin{subfigure}{0.18\textwidth}
        \centering
        \includegraphics[width=0.99\textwidth]{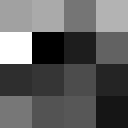}
        \caption{4x4}
    \end{subfigure}
    \begin{subfigure}{0.18\textwidth}
        \centering
        \includegraphics[width=0.99\textwidth]{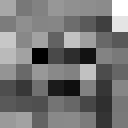}
        \caption{8x8}
    \end{subfigure}
    \begin{subfigure}{0.18\textwidth}
        \centering
        \includegraphics[width=0.99\textwidth]{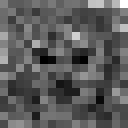}
        \caption{16x16}
    \end{subfigure}

    \caption{Optimal Patch Parameter - Faces Dataset\\
    \scriptsize
    Values for the optimal parameters calculated for different parameter patch sizes.}
    \label{fig:patch_faces}
\end{figure}

For the training dataset proposed, the quality reconstruction results for different number of patches is shown in \Cref{table:faces_algorithm_table}. This table shows the mean values for the SSIM and PSNR quality metric for the reconstructed images from the training dataset. Again, an improvement on the reconstruction quality can be seen as the degrees of freedom for the regularization parameter increases.

\begin{table}
    \centering
    \begin{tabular}{@{}lccccr@{}} \toprule
        \multicolumn{2}{c}{Item}&\multicolumn{2}{c}{}&\multicolumn{2}{c}{Reconstruction} \\ \cmidrule(r){5-6}
        Patch & Iterations & $\|\alpha_{k+1}-\alpha_k\|$ & COST & MSSIM & MPSNR \\ \midrule
        Scalar & 14 & 1.562e-3 & 152.3726 & 0.8141 & 27.0689 \\
        2x1 & 17 & 1.049e-3 & 152.3209 & 0.8142 & 27.0589\\
        2x2 & 21 & 1.762e-4 & 152.3120  & 0.8146 & 27.0739\\
        4x4 & 21 & 7.421e-4 & 152.0911 & 0.8154 & 27.0766\\
        8x8 & 24 & 7.050e-4 & 151.2011 & 0.8173 & 27.0787\\
        16x16 & 25 & 1.410e-3 & 149.9673 & 0.8193 & 27.1047\\
        32x32 & 35 & 3.285e-4 & 147.6573 & 0.8223 & 27.2141\\\bottomrule
    \end{tabular}
    \caption{Trust Region Algorithm behavior on the Faces dataset.}
    \label{table:faces_algorithm_table}
\end{table}

Finally, we can estimate the denoiser performance in images from the testing dataset. This experiment will show the \textit{overfitting} phenomena that may occur when dealing with large number of patches, including the case of scale-dependent parameters ($\alpha\in\R^n$). Indeed, it can be seen in the testing dataset an increment on the mean SSIM (MSSIM) for the reconstructed images from the testing dataset up to a 8x8 patch size. Any higher number of patches results in quality degradation. This is indeed the expected behavior when dealing with overfitting problems.

\begin{table}
    \begin{center}
        \begin{tabular}{@{}cccccccc@{}} \toprule
            img num & noisy & scalar & 2x2 & 4x4 & 8x8 & 16x16 & 32x32 \\ \midrule
            1 & 0.5247 & 0.7951 & 0.7948 & 0.7937 & 0.7956 & 0.7940 & 0.7930\\
            2 & 0.4588 & 0.6614 & 0.6618  & 0.6616 & 0.6625 & 0.6625 & 0.6592\\
            3 & 0.4267 & 0.7719 & 0.7721 & 0.7723 & 0.7747 & 0.7743 & 0.7710\\
            4 & 0.3836 & 0.7237 & 0.7229 & 0.7214 & 0.7235 & 0.7233 & 0.7202\\
            5 & 0.4580 & 0.7812 & 0.7799  & 0.7783 & 0.7798 & 0.7799 & 0.7768\\
            6 & 0.4263 & 0.7400 & 0.7403 & 0.7403 & 0.7420 & 0.7428 & 0.7423\\
            7 & 0.4547 & 0.6321 & 0.6322 & 0.6308 & 0.6305 & 0.6298 & 0.6262\\
            8 & 0.4117 & 0.7381 & 0.7386  & 0.7405 & 0.7416 & 0.7413 & 0.7398\\
            9 & 0.4655 & 0.7430 & 0.7409 & 0.7399 & 0.7405 & 0.7388 & 0.7354\\
            10 & 0.5081 & 0.8210 & 0.8210 & 0.8204 & 0.8211 & 0.8197 &0.8181\\\bottomrule
            \textbf{MSSIM} & & $\bm{0.7408}$ & $\bm{0.7405}$ & $\bm{0.7400}$ & $\bm{0.7412}$ & $\bm{0.7406}$ & $\bm{0.7382}$\\\bottomrule
        \end{tabular}
    \end{center}
    \caption{Faces Dataset SSIM Quality Measures in the validation dataset.}
\end{table}

\subsection{Learning Optimal Total Variation Discretization}
The selection of an adequate discretization of the total variation seminorm in the context of image reconstruction problems is still an open problem \cite{chambolle2020crouzeix,caillaud2020error}. In \cite{chambolle2020learning}, the authors propose a methodology for finding optimal discretization where instead of using hand-crafted discretization schemes for the total variation seminorm, a learning strategy is proposed.

The bilevel framework presented in this work, can also be used to learn optimal gradient discretization, by using different discretization schemes and their corresponding regularization parameters. We will make use of a training dataset to estimate the optimal regularization parameters for the contributions of each discretization scheme into the final solution. Let us define the following variational denoising model
\begin{equation}\label{eq:sumregs_lower_level}
    \min_{u\in\R^n} \Ecal(u) := \frac{1}{2}\|u-f\|^2+\sum_{j=1}^n(\alpha_1)_j\|(\Kbb_1 u)_j\|+\sum_{j=1}^n(\alpha_2)_j\|(\Kbb_2 u)_j\|+\sum_{j=1}^n(\alpha_3)_j\|(\Kbb_3 u)_j\|,
\end{equation}
where $\Kbb_1,\Kbb_2$ and $\Kbb_3$ are the forward, backward and centered finite differences discretization of the gradient operator respectively. The goal is to determine optimal parameters $(\alpha_1,\alpha_2,\alpha_3)^\top \in\R^{3\times n}_+$ that lead to an optimal discretization of the total variation operator. Indeed, we consider the following bilevel learning strategy
\begin{mini}
    {(\alpha_1,\alpha_2,\alpha_3)^\top \in\R^n_+\times\R^n_+\times\R^n_+}{\frac{1}{2}\|\bar{u}(\alpha)-u^{\text{\scriptsize train}}\|^2}{}{}
    \addConstraint{\bar{u} = \argmin_{u\in\R^m}\Ecal(u),}
\end{mini}
By extending the model presented previously, we can make use of a similar analysis using the Bouligand candidate presented in \cref{teo: G is a solution of the linear system} by defining the following adjoint state
\begin{align*}
    \scalar{p}{v} + \sum_{i=1}^3\sum_{j\in\Ical^i}\scalar{\mu_j^i}{(\Kbb^i v)_j} - \scalar{\nabla J(u)}{v} &= 0,\forall v\in \mathcal{V}\\
    \mu^i_j - \alpha_j^i T_j^i(\Kbb^i p)_j &= 0,\;\forall j\in\Ical^i,\;i=1,2,3,\\
\end{align*}
where $\mathcal{V}:=\{v\in\R^m:(\Kbb^i v)_j=0,\;\forall \Acal_s^i\cup\Bcal_1^i,\;(\Kbb^i v)_j\in span(q_j^i),\;\forall j\in\Bcal_2^i,\;i=1,2,3\}$, and the following gradient form
\begin{equation*}
    \scalar{j'(\alpha^i)}{h^i} = -\sum_{j\in\Ical^i\cup\Bcal_2^i}\frac{h_j^i}{\alpha_j^i}\scalar{q_j^i}{(\Kbb^i p)_j}\;\text{for }i=1,2,3.
\end{equation*}

The same procedure applies for the regularized problem. Using the KKT conditions for the differentiable problem we obtain
\begin{equation*}
    \scalar{p}{v} + \sum_{i=1}^3\sum_{j=1}^n \alpha_j^i\scalar{h'^*_\gamma ((\Kbb^i u)_j)(\Kbb^i p)_j}{(\Kbb^i v)_j}  = -\scalar{\nabla J(u)}{v},\;\forall v\in\R^m,\\
\end{equation*}
and its corresponding gradient characterization
\begin{equation*}
    (j'(\alpha^i))_j = \scalar{h_\gamma((\Kbb^i u)_j)}{(\Kbb^i p)_j},\;\forall i=1,2,3.
\end{equation*}

The optimal regularization parameters for 4x4 and 8x8 patch-parameters for each of the three regularizers considered is presented in \cref{fig:patch_cameraman_sumregs} for the cameraman dataset and \cref{fig:patch_faces_sumregs} for the faces dataset. Again, we can see an improvement on the image quality by using more patches in the training datasets. Finally, in \cref{table:faces_sumregs_validation} we can see a comparison between different patch sizes when using these three regularizers. Again, it is worth noticing that in the validation dataset the best reconstruction quality corresponds to an 8x8 patch-parameter size, and any further enlargement of the patch dimension drives a lower reconstruction quality.

\begin{figure}
    \captionsetup[subfigure]{labelformat=empty}
    \centering

    \begin{subfigure}{0.18\textwidth}
        \centering
        \includegraphics[width=0.99\textwidth]{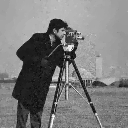}
        \caption{SSIM=0.8187}
    \end{subfigure}
    \begin{subfigure}{0.18\textwidth}
        \centering
        \includegraphics[width=0.99\textwidth]{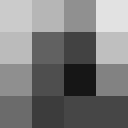}
        \caption{$\alpha_1$}
    \end{subfigure}
    \begin{subfigure}{0.18\textwidth}
        \centering
        \includegraphics[width=0.99\textwidth]{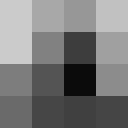}
        \caption{$\alpha_2$}
    \end{subfigure}
    \begin{subfigure}{0.18\textwidth}
        \centering
        \includegraphics[width=0.99\textwidth]{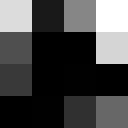}
        \caption{$\alpha_3$}
    \end{subfigure}

    \begin{subfigure}{0.18\textwidth}
        \centering
        \includegraphics[width=0.99\textwidth]{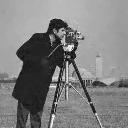}
        \caption{SSIM=0.8281}
    \end{subfigure}
    \begin{subfigure}{0.18\textwidth}
        \centering
        \includegraphics[width=0.99\textwidth]{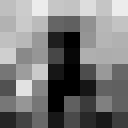}
        \caption{$\alpha_1$}
    \end{subfigure}
    \begin{subfigure}{0.18\textwidth}
        \centering
        \includegraphics[width=0.99\textwidth]{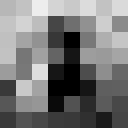}
        \caption{$\alpha_2$}
    \end{subfigure}
    \begin{subfigure}{0.18\textwidth}
        \centering
        \includegraphics[width=0.99\textwidth]{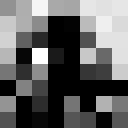}
        \caption{$\alpha_3$}
    \end{subfigure}

    \caption{Optimal Patch Parameters - Cameraman Dataset\\
    \scriptsize
    Values for the optimal parameter calculated for the Cameraman dataset for different patch sizes.}
    \label{fig:patch_cameraman_sumregs}
\end{figure}

\begin{figure}
    \captionsetup[subfigure]{labelformat=empty}
    \centering
    \begin{subfigure}{0.18\textwidth}
        \centering
        \includegraphics[width=0.99\textwidth]{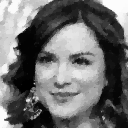}
        \caption{SSIM=0.8094}
    \end{subfigure}
    \begin{subfigure}{0.18\textwidth}
        \centering
        \includegraphics[width=0.99\textwidth]{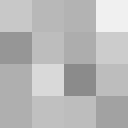}
        \caption{$\alpha_1$}
    \end{subfigure}
    \begin{subfigure}{0.18\textwidth}
        \centering
        \includegraphics[width=0.99\textwidth]{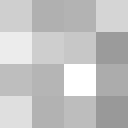}
        \caption{$\alpha_2$}
    \end{subfigure}
    \begin{subfigure}{0.18\textwidth}
        \centering
        \includegraphics[width=0.99\textwidth]{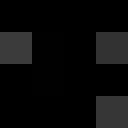}
        \caption{$\alpha_3$}
    \end{subfigure}

    \begin{subfigure}{0.18\textwidth}
        \centering
        \includegraphics[width=0.99\textwidth]{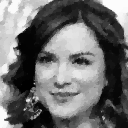}
        \caption{SSIM=0.8130}
    \end{subfigure}
    \begin{subfigure}{0.18\textwidth}
        \centering
        \includegraphics[width=0.99\textwidth]{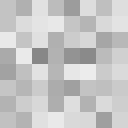}
        \caption{$\alpha_1$}
    \end{subfigure}
    \begin{subfigure}{0.18\textwidth}
        \centering
        \includegraphics[width=0.99\textwidth]{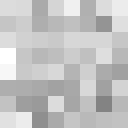}
        \caption{$\alpha_2$}
    \end{subfigure}
    \begin{subfigure}{0.18\textwidth}
        \centering
        \includegraphics[width=0.99\textwidth]{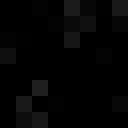}
        \caption{$\alpha_3$}
    \end{subfigure}

    \begin{subfigure}{0.18\textwidth}
        \centering
        \includegraphics[width=0.99\textwidth]{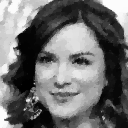}
        \caption{SSIM=0.8153}
    \end{subfigure}
    \begin{subfigure}{0.18\textwidth}
        \centering
        \includegraphics[width=0.99\textwidth]{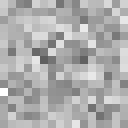}
        \caption{$\alpha_1$}
    \end{subfigure}
    \begin{subfigure}{0.18\textwidth}
        \centering
        \includegraphics[width=0.99\textwidth]{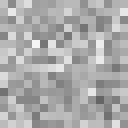}
        \caption{$\alpha_2$}
    \end{subfigure}
    \begin{subfigure}{0.18\textwidth}
        \centering
        \includegraphics[width=0.99\textwidth]{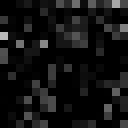}
        \caption{$\alpha_3$}
    \end{subfigure}

    \caption{Optimal Patch Parameters - Faces Dataset\\
    \scriptsize
    Values for the optimal parameters calculated for different parameter patch sizes.}
    \label{fig:patch_faces_sumregs}
\end{figure}

\begin{table}
    \begin{center}
    \begin{tabular}{@{}ccccccc@{}} \toprule
        img num & noisy & scalar & 2x2 & 4x4 & 8x8 & 16x16\\
        \midrule
        1 & 0.5247 & 0.7922 & 0.7952 & 0.7951 & 0.7977 & 0.7965\\
        2 & 0.4588 & 0.6568 & 0.6676 & 0.6663 & 0.6649 & 0.6653\\
        3 & 0.4267 & 0.7745 & 0.7742 & 0.7729 & 0.7748 & 0.7741\\
        4 & 0.3836 & 0.7237 & 0.7239 & 0.7226 & 0.7244 & 0.7206\\
        5 & 0.4580 & 0.7814 & 0.7796 & 0.7779 & 0.7806 & 0.7809\\
        6 & 0.4263 & 0.7394 & 0.7406 & 0.7414 & 0.7432 & 0.7441\\
        7 & 0.4547 & 0.6262 & 0.6372 & 0.6357 & 0.6338 & 0.6333\\
        8 & 0.4117 & 0.7377 & 0.7380 & 0.7413 & 0.7429 & 0.7421\\
        9 & 0.4655 & 0.7438 & 0.7481 & 0.7445 & 0.7432 & 0.7420\\
        10 & 0.5081 & 0.8185 & 0.8225 & 0.8217 & 0.8219 & 0.8211\\
        \hline
        \textbf{MSSIM} & & $\bm{0.7395}$ & $\bm{0.7420}$ & $\bm{0.7419}$ & $\bm{0.7428}$ & $\bm{0.7424}$\\
        \hline
    \end{tabular}
    \end{center}
    \caption{Faces Dataset SSIM Quality Measures - Optimal Gradient Discretization in the validation dataset}
    \label{table:faces_sumregs_validation}
\end{table}

\printbibliography

@article{d2021bilevel,
  title={Bilevel Parameter Learning for Nonlocal Image Denoising Models},
  author={D’Elia, M and De Los Reyes, JC and Miniguano-Trujillo, A},
  journal={Journal of Mathematical Imaging and Vision},
  pages={1--23},
  year={2021},
  publisher={Springer}
}

@book{de2015numerical,
  title={Numerical PDE-Constrained Optimization},
  author={De los Reyes, Juan Carlos},
  year={2015},
  publisher={Springer}
}

@article{hintermuller2017analytical,
  title={Analytical aspects of spatially adapted total variation regularisation},
  author={Hinterm{\"u}ller, Michael and Papafitsoros, Konstantinos and Rautenberg, Carlos N},
  journal={Journal of Mathematical Analysis and Applications},
  volume={454},
  number={2},
  pages={891--935},
  year={2017},
  publisher={Elsevier}
}

@incollection{hintermuller2019generating,
  title={Generating structured nonsmooth priors and associated primal-dual methods},
  author={Hinterm{\"u}ller, Michael and Papafitsoros, Kostas},
  booktitle={Handbook of Numerical Analysis},
  volume={20},
  pages={437--502},
  year={2019},
  publisher={Elsevier}
}

@incollection{calatroni2017bilevel,
  title={Bilevel approaches for learning of variational imaging models},
  author={Calatroni, Luca and Cao, Chung and De Los Reyes, Juan Carlos and Sch{\"o}nlieb, Carola-Bibiane and Valkonen, Tuomo},
  booktitle={Variational Methods},
  pages={252--290},
  year={2017},
  publisher={Walter de Gruyter GmbH}
}

@article{van2017learning,
  title={Learning optimal spatially-dependent regularization parameters in total variation image denoising},
  author={Van Chung, Cao and De los Reyes, JC and Sch{\"o}nlieb, CB},
  journal={Inverse Problems},
  volume={33},
  number={7},
  pages={074005},
  year={2017},
  publisher={IOP Publishing}
}

@inproceedings{strong1996spatially,
  title={Spatially and scale adaptive total variation based regularization and anisotropic diffusion in image processing},
  author={Strong, David M and Chan, Tony F},
  booktitle={Diusion in Image Processing, UCLA Math Department CAM Report},
  year={1996},
  organization={Citeseer}
}

@article{dong2011automated,
  title={Automated regularization parameter selection in multi-scale total variation models for image restoration},
  author={Dong, Yiqiu and Hinterm{\"u}ller, Michael and Rincon-Camacho, M Monserrat},
  journal={Journal of Mathematical Imaging and Vision},
  volume={40},
  number={1},
  pages={82--104},
  year={2011},
  publisher={Springer}
}

@article{de2013image,
  title={Image Denoising: Learning the noise model via nonsmooth pde-constrained optimization.},
  author={De los Reyes, Juan Carlos and Sch{\"o}nlieb, Carola-Bibiane},
  journal={Inverse Problems \& Imaging},
  volume={7},
  number={4},
  year={2013}
}

@article{kunisch2013bilevel,
  title={A bilevel optimization approach for parameter learning in variational models},
  author={Kunisch, Karl and Pock, Thomas},
  journal={SIAM Journal on Imaging Sciences},
  volume={6},
  number={2},
  pages={938--983},
  year={2013},
  publisher={SIAM}
}

@article{glowinski1985numerical,
  title={Numerical methods for nonlinear variational problems},
  author={Glowinski, Roland and Oden, J Tinsley},
  journal={Journal of Applied Mechanics},
  volume={52},
  pages={739},
  year={1985}
}

@article{christof2017non,
	title={A Nonsmooth Trust-Region Method for Locally Lipschitz Functions with Application to Optimization Problems Constrained by Variational Inequalities},
  author={Christof, Constantin and De los Reyes, Juan Carlos and Meyer, Christian},
  journal={SIAM Journal on Optimization},
  volume={30},
  number={3},
  pages={2163--2196},
  year={2020},
  publisher={SIAM}
}

@article{outrata2000generalized,
  title={A generalized mathematical program with equilibrium constraints},
  author={Outrata, Jir{\'i} V},
  journal={SIAM Journal on Control and Optimization},
  volume={38},
  number={5},
  pages={1623--1638},
  year={2000},
  publisher={SIAM}
}

@book{ekeland1999convex,
  title={Convex analysis and variational problems},
  author={Ekeland, Ivar and Temam, Roger},
  volume={28},
  year={1999},
  publisher={SIAM}
}

@book{rockafellar2009variational,
  title={Variational analysis},
  author={Rockafellar, R Tyrrell and Wets, Roger J-B},
  volume={317},
  year={2009},
  publisher={Springer Science \& Business Media}
}

@article{ochs2016techniques,
  title={Techniques for gradient-based bilevel optimization with non-smooth lower level problems},
  author={Ochs, Peter and Ranftl, Ren{\'e} and Brox, Thomas and Pock, Thomas},
  journal={Journal of Mathematical Imaging and Vision},
  volume={56},
  number={2},
  pages={175--194},
  year={2016},
  publisher={Springer}
}

@article{hintermuller2015bilevel,
  title={Bilevel optimization for calibrating point spread functions in blind deconvolution},
  author={Hinterm{\"u}ller, Michael and Wu, Tao},
  year={2015},
  publisher={Humboldt-Universit{\"a}t zu Berlin, Mathematisch-Naturwissenschaftliche Fakult{\"a}t}
}

@inproceedings{ranftl2014deep,
  title={A deep variational model for image segmentation},
  author={Ranftl, Ren{\'e} and Pock, Thomas},
  booktitle={German Conference on Pattern Recognition},
  pages={107--118},
  year={2014},
  organization={Springer}
}

@inproceedings{calatroni2013dynamic,
  title={Dynamic sampling schemes for optimal noise learning under multiple nonsmooth constraints},
  author={Calatroni, Luca and De Los Reyes, Juan Carlos and Sch{\"o}nlieb, Carola-Bibiane},
  booktitle={IFIP Conference on System Modeling and Optimization},
  pages={85--95},
  year={2013},
  organization={Springer}
}

@article{de2016structure,
  title={The structure of optimal parameters for image restoration problems},
  author={De los Reyes, Juan Carlos and Sch{\"o}nlieb, C-B and Valkonen, Tuomo},
  journal={Journal of Mathematical Analysis and Applications},
  volume={434},
  number={1},
  pages={464--500},
  year={2016},
  publisher={Elsevier}
}

@article{de2017bilevel,
  title={Bilevel parameter learning for higher-order total variation regularisation models},
  author={De los Reyes, Juan Carlos and Sch{\"o}nlieb, C-B and Valkonen, Tuomo},
  journal={Journal of Mathematical Imaging and Vision},
  volume={57},
  number={1},
  pages={1--25},
  year={2017},
  publisher={Springer}
}

@inproceedings{klatzer2015continuous,
  title={Continuous hyper-parameter learning for support vector machines},
  author={Klatzer, Teresa and Pock, Thomas},
  booktitle={Computer Vision Winter Workshop (CVWW)},
  pages={39--47},
  year={2015}
}

@article{strong2003edge,
  title={Edge-preserving and scale-dependent properties of total variation regularization},
  author={Strong, David and Chan, Tony},
  journal={Inverse problems},
  volume={19},
  number={6},
  pages={S165},
  year={2003},
  publisher={IOP Publishing}
}

@book{luo1996mathematical,
  title={Mathematical programs with equilibrium constraints},
  author={Luo, Zhi-Quan and Pang, Jong-Shi and Ralph, Daniel},
  year={1996},
  publisher={Cambridge University Press}
}

@article{chambolle2020learning,
  title={Learning Consistent Discretizations of the Total Variation},
  author={Chambolle, Antonin and Pock, Thomas},
  year={2020}
}

@inproceedings{voglis2004rectangular,
  title={A rectangular trust region dogleg approach for unconstrained and bound constrained nonlinear optimization},
  author={Voglis, C and Lagaris, IE},
  booktitle={WSEAS International Conference on Applied Mathematics},
  volume={7},
  year={2004},
  organization={Citeseer}
}

@article{xu2007astral,
  title={ASTRAL: An active set linfinity-trust-region algorithm for box constrained optimization},
  author={Xu, L and Burke, J},
  journal={Optimization online, July},
  year={2007}
}

@article{dlrvillacis2021,
  title={Bilevel Optimization Methods in Imaging},
  author={De los Reyes, Juan Carlos and Villacís, David},
  journal={Handbook of Mathematical Models and Algorithms in Computer Vision and Imaging},
  volume={33},
  number={7},
  pages={074005},
  year={2021},
  publisher={Springer}
}

@book{jahn2020introduction,
  title={Introduction to the theory of nonlinear optimization},
  author={Jahn, Johannes},
  year={2020},
  publisher={Springer Nature}
}

@inproceedings{liu2015faceattributes,
 title = {Deep Learning Face Attributes in the Wild},
 author = {Liu, Ziwei and Luo, Ping and Wang, Xiaogang and Tang, Xiaoou},
 booktitle = {Proceedings of International Conference on Computer Vision (ICCV)},
 month = {December},
 year = {2015}
}

@article{caillaud2020error,
  title={Error estimates for finite differences approximations of the total variation},
  author={Caillaud, Corentin and Chambolle, Antonin},
  year={2020}
}

@article{chambolle2020crouzeix,
  title={Crouzeix--Raviart approximation of the total variation on simplicial meshes},
  author={Chambolle, Antonin and Pock, Thomas},
  journal={Journal of Mathematical Imaging and Vision},
  volume={62},
  number={6},
  pages={872--899},
  year={2020},
  publisher={Springer}
}

@book{scholtes2012introduction,
  title={Introduction to piecewise differentiable equations},
  author={Scholtes, Stefan},
  year={2012},
  publisher={Springer Science \& Business Media}
}
\end{document}